\theoremstyle{plain}
\newtheorem{theorem}[equation]{Theorem}
\newtheorem{lemma}[equation]{Lemma}
\theoremstyle{definition}
\theoremstyle{remark}
\newcommand{\dv}{\operatorname{div}}
\newcommand{\dist}{\operatorname{dist}}
\newcommand{\tr}{\operatorname{tr}}
\newcommand{\Sym}{\operatorname{Sym}}
\numberwithin{equation}{section}
\newcommand{\bR}{\mathbb{R}}
\providecommand{\abs}[1]{\lvert#1\rvert}
\providecommand{\Abs}[1]{\left\lvert#1\right\rvert}
\providecommand{\bigabs}[1]{\bigl\lvert#1\bigr\rvert}
\providecommand{\norm}[1]{\lVert#1\rVert}
\providecommand{\bignorm}[1]{\bigl\lVert#1\bigr\rVert}
\renewcommand{\vec}[1]{\boldsymbol{#1}}
\begin{document}
\title[Regularity of solutions on critical sets]
{Refined regularity at critical points for linear elliptic equations}

\author[J. Choi]{Jongkeun Choi}
\address[J. Choi]{Department of Mathematics Education, Pusan National University,  Busan, 46241, Republic of Korea}
\email{jongkeun\_choi@pusan.ac.kr}
\thanks{J. Choi was supported by the National Research Foundation of Korea (NRF) under agreement NRF-2022R1F1A1074461.}

\author[H. Dong]{Hongjie Dong}
\address[H. Dong]{Division of Applied Mathematics, Brown University,
182 George Street, Providence, RI 02912, United States of America}
\email{Hongjie\_Dong@brown.edu}
\thanks{H. Dong was partially supported by the NSF under agreement DMS-2055244.}

\author[S. Kim]{Seick Kim}
\address[S. Kim]{Department of Mathematics, Yonsei University, 50 Yonsei-ro, Seodaemun-gu, Seoul 03722, Republic of Korea}
\email{kimseick@yonsei.ac.kr}
\thanks{S. Kim is supported by the National Research Foundation of Korea (NRF) under agreement NRF-2022R1A2C1003322.}

\subjclass[2010]{Primary 35B45, 35B65,  35J08}

\keywords{}

\begin{abstract}
We investigate the regularity of solutions to linear elliptic equations in both divergence and non-divergence forms, particularly when the principal coefficients have Dini mean oscillation. We show that if a solution $u$ to a divergence-form equation satisfies $Du(x^o)=0$ at a point, then the second derivative $D^2u(x^o)$ exists and satisfies sharp continuity estimates.
As a consequence, we obtain ``$C^{2,\alpha}$ regularity'' at critical points when the coefficients of $L$ are $C^\alpha$.
This result refines a theorem of Teixeira (Math. Ann. 358 (2014), no. 1--2, 241--256) in the linear setting, where both linear and nonlinear equations were considered.
We also establish an analogous result for equations in non-divergence form.
\end{abstract}

\maketitle

\section{Introduction and main results}
We consider the elliptic operator $L$ in divergence form:
\[
Lu=  D_i(a^{ij}D_ju) + b^i D_iu + cu=\dv (\mathbf A D u)+\vec b\cdot Du + cu,
\]
and the corresponding operator $\mathrm{L}$ in non-divergence form:
\[
\mathrm{L}u= a^{ij} D_{ij}u + b^i D_i  u + cu=\tr(\mathbf A D^2 u)+ \vec b \cdot Du +cu
\]
defined on a domain $\Omega \subset \bR^d$, where $d \ge 2$.
Throughout this article, we adopt the standard summation convention over repeated indices.

For a function $f$ defined on $\Omega\subset \bR^d$, we define its mean oscillation function $\omega_f(\cdot)$ by
\[
\omega_f(r):=\sup_{x\in \Omega} \fint_{\Omega\cap B_r(x)} \Abs{f-(f)_{\Omega \cap B_r(x)}},\quad (f)_{\Omega \cap B_r(x)}:=\fint_{\Omega\cap B_r(x)}f.
\]
We say that $f$ has Dini mean oscillation (abbreviated as DMO), and write $f \in \mathrm{DMO}$ if $\omega_f$ satisfies the Dini condition:
\[
\int_0^1 \frac{\omega_f(r)}{r}\,dr <\infty.
\]
It is clear that if $f$ is Dini continuous (i.e., its modulus of continuity satisfies the Dini condition), then $f$ has Dini mean oscillation.
However, the converse is not true: the DMO condition is strictly weaker than Dini continuity; see \cite{DK17} for examples.
Note that the DMO condition implies uniform continuity, with a modulus of continuity controlled by $\omega_f$; see the appendix of \cite{HK20}. 

This article establishes that if $u$ is a solution to the divergence-form equation $Lu=0$, and its first derivative vanishes at a point $x^o$ (i.e., $Du(x^o)=0$), then the second derivative $D^2u(x^o)$ exists and satisfies quantitative estimates at that point.
This conclusion is derived under the assumption that the principal coefficient matrix  $\mathbf A\in \mathrm{DMO}$, together with suitable conditions on the lower-order coefficients of $L$.
In particular, if  $\mathbf A \in C^\alpha $, $\vec b \in L^p$ with $p \ge d/(1-\alpha)$, and $c \in C^\alpha$  for some $\alpha \in (0,1)$, then $u$ enjoys ``$C^{2,\alpha}$ regularity'' at any critical point $x^o$.
That is, the second derivative $D^2u(x^o)$ exists, and $u$ satisfies the following estimate for $x$ sufficiently close to $x^o$:
\[
\abs{Du(x)-D^2u(x^o)(x-x^o)} \lesssim \abs{x-x^o}^{1+\alpha}.
\]
This result improves upon a notable theorem of Teixeira \cite{Teixeira} in the linear setting, which, under the assumption $\mathbf A \in C^\alpha$, $\vec b=0$, and $c=0$, establishes ``$C^{1,1^-}$ regularity'' at $x^o$.
That is, for every $0<\gamma<1$, the solution satisfies the estimate
\[
\abs{u(x)-u(x^o)} \lesssim \abs{x-x^o}^{1+\gamma}
\]
for $x$ sufficiently close to $x^o$.

An analogous result holds for the non-divergence form equation $\mathrm{L}u=0$.
Under the assumption that $\mathbf A \in \mathrm{DMO}$ and suitable regularity conditions on the lower-order coefficients, if $u$ is a solution of $\mathrm{L}u=0$ and satisfies $D^2u(x^o)=0$ at some point $x^o$, then the third derivative $D^3u(x^o)$ exists and satisfies appropriate continuity estimates. In particular, if $\mathbf A \in C^{\alpha}$ and $\vec b$, $c \in C^{1,\alpha}$ for some $\alpha \in (0,1)$, then $u$ enjoys ``$C^{3,\alpha}$ regularity''  at any point $x^o$ where its Hessian vanishes.

This article builds upon our recent work in \cite{CDKK24}, which investigated improved regularity for solutions of the double divergence form equation $\mathrm{L}^*u=0$ (the formal adjoint of the non-divergence form equation $\mathrm{L}u=0$) on the set where the solution $u$ vanishes.
In \cite{CDKK24}, we specifically established ``$C^{1,\alpha}$ regularity'' for solutions of $\mathrm{L}^*u=0$ at points $x^o$ where $u(x^o)=0$, under the assumptions that $\mathbf A \in C^\alpha$, $\vec b \in L^p$ for some $p\ge d/(1-\alpha)$, and and $c \in L^p$ for some $p\ge d/(2-\alpha)$.
This result refines an earlier notable theorem by Leit\~ao, Pimentel, and Santos \cite{LPS20}. 

\smallskip
We now present our main results.
In the following theorem, for $\xi \in \bR^d$, the notation $D^2u(x^o)\xi$ denotes the product of the symmetric $d \times d$ matrix $D^2u(x^o)$ and the vector $\xi$, regarded as a $d \times 1$ column vector. 
We define
\[
\abs{D^2u(x^o)}=\sup_{\abs{\xi}=1} \,\abs{D^2 u(x^o)\xi \cdot \xi}=\sup_{\abs{\xi}=1} \,\abs{D_{ij}u(x^o)\xi^i \xi^j}.
\]
Since $D^2u(x^o)$ is symmetric, this definition coincides with the usual operator norm and is equivalent to other matrix norms.

\begin{theorem} \label{thm01}
Let $\Omega \subset \bR^d$ be a domain.
Assume the matrix $\mathbf A=(a^{ij})$ satisfies the following ellipticity and boundedness conditions for some constants $\lambda, \Lambda >0$:
\begin{equation} \label{ellipticity-d}
\lambda |\xi|^2 \le a^{ij}(x) \xi^i \xi^j \quad \text{and} \quad |a^{ij}(x) \xi^i \eta^j| \le \Lambda |\xi| |\eta| \quad \text{for all } x \in \Omega,\; \xi, \eta \in \mathbb{R}^d.
\end{equation}
Furthermore, assume $\mathbf A \in \mathrm{DMO}$, $\vec b \in L^p$ for some $p>d$, and $c \in \mathrm{DMO}$.
Let $u \in C^1_{\rm loc}(\Omega)$ be a weak solution to the equation:
\begin{equation}		\label{eq_main-d}
Lu=\dv (\mathbf{A} D u) + \boldsymbol{b} \cdot Du + cu = f \quad \text{in }\; \Omega,
\end{equation}
where $f \in \mathrm{DMO}$.
If $Du(x^o)=0$ for some $x^o\in\Omega$, then $Du$ is differentiable at $x^o$.
More precisely, there exist positive constants $r_0$ and $C$, depending only on $d$, $\lambda$, $\Lambda$, and the coefficients of $L$, such that for $r:=\min\{r_0, \frac12 \dist(x^o,\partial\Omega)\}$, the second derivative $D^2u$ at $x^o$ satisfies the estimate:
\[
\abs{D^2 u (x^o)} \le C \left(\frac{1}{r} \fint_{B_{2r}(x^o)} \abs{Du}+ \varrho_{\rm lot}(r) +\varrho_f(r) \right).
\]
Here, the terms $\varrho_{\cdots}(\cdot)$ represent moduli of continuity, where 
$\varrho_{\rm coef}(\cdot)$ is determined by the coefficients of $L$, $\varrho_{\rm lot}(\cdot)$ depends on $\norm{u}_{L^\infty(B_r(x^o))}$ and $c$, and $\varrho_f(\cdot)$ is determined by $f$.
Furthermore, for $0<\abs{x-x^o}<r$, the following estimate holds:
\begin{multline*}		
\frac{\Abs{Du(x)-D^2u(x^o)(x-x^o)}}{\abs{x-x^o}}
\le C \varrho_{\rm coef}(\abs{x-x^o}) \left\{\frac{1}{r} \fint_{B_{2r}(x^o)} \abs{Du} + \varrho_{\rm lot}(r) +\varrho_{f}(r) \right\}\\
+C \varrho_{\rm lot}(\abs{x-x^o})+C \varrho_f(\abs{x-x^o}).
\end{multline*}
\end{theorem}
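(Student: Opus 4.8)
The plan is to run a Campanato-type iteration on a \emph{second}-order excess of $Du$, using the critical-point hypothesis $Du(x^o)=0$ to upgrade the $C^1$-type control available from the DMO theory into genuine linear control, of size $O(|x-x^o|)$, near $x^o$. Translate so that $x^o=0$ and work in $B_{r_0}$ with $r_0\le\frac12\dist(x^o,\partial\Omega)$ to be fixed small. Move the lower-order terms to the right: $\dv(\mathbf A Du)=f-\vec b\cdot Du-cu=:F$. Here $cu$ again has Dini mean oscillation (the product of a DMO function and a function that is Lipschitz on the relevant scales, since $u\in C^1_{\rm loc}$, is DMO, with modulus controlled by $\|u\|_{L^\infty}\omega_c+\|c\|_{L^\infty}\omega_u$ plus a summable cross term), which is the source of $\varrho_{\rm lot}$; the drift term is absorbed through the subcritical bound $\rho\big(\fint_{B_\rho}|\vec b|^p\big)^{1/p}\lesssim\rho^{1-d/p}\|\vec b\|_{L^p}\to0$, feeding into $\varrho_{\rm coef}$; and $\varrho_f$ comes from $\omega_f$.

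Set $\Psi(\rho):=\inf_{M\in\Sym}\big(\fint_{B_\rho}|Du(x)-Mx|^2\,dx\big)^{1/2}$. For each dyadic $\rho\le r_0$ freeze $\bar{\mathbf A}_\rho:=(\mathbf A)_{B_\rho}$, let $v$ solve $\dv(\bar{\mathbf A}_\rho Dv)=0$ in $B_\rho$ with $v-u\in W^{1,2}_0(B_\rho)$, and bound $w:=u-v$ by the energy inequality after splitting the right-hand side of its equation into the $B_\rho$-average part — which produces a pure quadratic correction, hence updates only the Hessian candidate $M$ and does \emph{not} enter $\Psi$ — and the oscillatory remainder, which is genuinely small, of size $\omega_{\mathbf A}(\rho)\|Du\|_{L^\infty(B_\rho)}+\rho\,\varrho_f(\rho)+\cdots$. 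Since $Dv$ is, after an affine change of variables, a solution of a constant-coefficient elliptic system, interior estimates give $\inf_M\big(\fint_{B_{\kappa\rho}}|Dv-Mx|^2\big)^{1/2}\le C\kappa\,\Psi(\rho)$ for every $\kappa\in(0,1)$. Combining, one obtains the iteration inequality
\[
\Psi(\kappa\rho)\ \le\ C_0\kappa\,\Psi(\rho)\ +\ C_0\big(\omega_{\mathbf A}(\rho)\,\|Du\|_{L^\infty(B_\rho)}+\rho\,\varrho_{\rm lot}(\rho)+\rho\,\varrho_f(\rho)\big).
\]

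The crucial companion estimate is $\|Du\|_{L^\infty(B_\rho)}\le C\big(\Psi(2\rho)+\rho\,C_*\big)$ with $C_*$ a fixed finite quantity, and this is exactly where $Du(0)=0$ enters: applying the interior $C^1$ estimate for DMO coefficients (cf.\ \cite{DK17}) to $\phi:=u-u(0)-\frac12 M_{2\rho}x\cdot x$, which solves a divergence-form equation whose data ($F$, together with the bounded constant and the DMO-coefficient divergence term produced by $\dv(\mathbf A M_{2\rho}x)$) have controlled size, yields $\|Du-M_{2\rho}x\|_{L^\infty(B_\rho)}\lesssim\Psi(2\rho)+\rho\,(C_*+|M_{2\rho}|)$, and adding $|M_{2\rho}|\rho$ closes it. Substituting back, choosing $\kappa$ with $C_0\kappa\le\frac12$, and taking $r_0$ small enough that the resulting $\omega_{\mathbf A}(\rho)\Psi(2\rho)$ term is absorbed on the left for $\rho\le r_0$, one is left with $\Psi(\kappa\rho)\le\frac12\Psi(\rho)+\rho\,e(\rho)$ where $e(\rho)=C\big(\omega_{\mathbf A}(\rho)+\varrho_{\rm lot}(\rho)+\varrho_f(\rho)\big)$ is summable over dyadic scales by the Dini conditions and $p>d$. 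It is the factor $\rho$ in front of $e(\rho)$ — so that $\sum_k e(\kappa^k r)$, rather than the divergent $\sum_k e(\kappa^k r)/(\kappa^k r)$, controls the outcome — that the hypothesis $Du(x^o)=0$ buys. Summing then gives $\sup_{\rho\le r}\Psi(\rho)/\rho\le C\big(\tfrac1r\fint_{B_{2r}}|Du|+\varrho_{\rm lot}(r)+\varrho_f(r)\big)$; the minimizers $M_\rho$ converge to a symmetric matrix identified as $D^2u(0)$, with $|D^2u(0)|$ bounded by the same quantity, and tracking the convergence rates of $M_\rho\to D^2u(0)$ and of $\Psi(\rho)/\rho$ produces the final pointwise estimate with the stated moduli.

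The main obstacle is making this self-referential scheme close. One must peel off the $\rho$-independent part of every forcing term so that it feeds only the Hessian candidate $M_\rho$ and not the excess $\Psi$, keep the $M_\rho$ uniformly bounded throughout the iteration (a bound that is itself part of the conclusion, so cannot be assumed — typically handled by a simultaneous induction on $\Psi(\rho)/\rho$ and $|M_\rho|$), and verify that the accumulated errors add up to precisely the advertised moduli $\varrho_{\rm coef},\varrho_{\rm lot},\varrho_f$. A naive use of the $C^1$ modulus of $Du$ in place of the companion bound for $\|Du\|_{L^\infty(B_\rho)}$ would make the error series diverge; extracting the extra factor $\rho$ from $Du(x^o)=0$ via the auxiliary function $\phi$ is the heart of the matter.
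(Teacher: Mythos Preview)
Your strategy matches the paper's: a Campanato iteration on a second-order excess of $Du$, a freezing-of-coefficients decomposition $u=v+w$, the companion $L^\infty$ bound obtained by applying the $C^1$-DMO estimate to the subtracted function $\phi=u-\tfrac12 M x\cdot x$ (this is the paper's Lemma~\ref{lem1702sat}/\ref{lem3.11sat}), and the self-referential absorption of $\|Du\|_{L^\infty(B_\rho)}/\rho$ by choosing $r_0$ small (the paper's Lemma~\ref{lem1548sun}).

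Two technical points where your write-up differs from the paper and would need adjustment. First, the paper works with $L^{1/2}$-averages $(\fint|\cdot|^{1/2})^2$ rather than $L^2$, because the only available bound on $w$ comes from the weak-$(1,1)$ estimate of Lemma~\ref{lem01}; an energy estimate in $L^2$ would require $\|\mathbf A-\bar{\mathbf A}\|_{L^2}$ rather than the $L^1$-oscillation $\omega_{\mathbf A}$, so your $L^2$ formulation as written does not quite close under the DMO hypothesis. Second, and more substantively, your excess $\Psi(\rho)=\inf_{M}(\fint_{B_\rho}|Du-Mx|^2)^{1/2}$ subtracts only the linear part $Mx$, whereas the paper's $\varphi(r)$ minimizes over full affine maps $\mathbf Sx+\vec p$. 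The claimed interior inequality $\inf_M(\fint_{B_{\kappa\rho}}|Dv-Mx|^2)^{1/2}\le C\kappa\,\Psi(\rho)$ is false as stated: Taylor gives $Dv(x)\approx D^2v(0)x+Dv(0)$, and the constant $Dv(0)$ does not vanish with $\kappa$. The paper handles this by carrying $\vec p_j$ through the iteration and proving separately (via the companion $L^\infty$ bound and the hypothesis $Du(0)=0$) that $|\vec p_j|\lesssim \kappa^j r_0\cdot(\text{controlled})$; see \eqref{eq7.51}. Once you insert the constant term into $\Psi$ and track it as the paper does, your scheme goes through.
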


\smallskip

Here are a few quick remarks concerning Theorem \ref{thm01}:
First, under the assumptions of the theorem, any weak solution $u$ belongs to  $C^1_{\rm loc}(\Omega)$.
In fact, it suffices to assume $c \in L^p$ for some $p>d$, rather than requiring $c \in \mathrm{DMO}$.
For further details, refer to Theorem 1.3 in \cite{DEK18} and Theorem 1.5 in \cite{DK17}.
Second, the precise dependence of the moduli of continuity $\rho_{\cdots}(t)$ on the relevant parameters is described in Section \ref{sec3}. Specifically:
\begin{itemize}
\item
If $\mathbf A \in C^\alpha$ for some $\alpha \in (0,1)$ and $\vec b \in L^p$ with $p \ge d/(1-\alpha)$, then $\varrho_{\rm coef}(t) \lesssim t^\alpha$.
\item
If $c \in C^\alpha$ for some $\alpha \in (0,1)$, then  $\varrho_{\rm lot}(t) \lesssim t^\alpha$.
\item
If $f \in C^\alpha$ for some $\alpha \in (0,1)$, then  $\varrho_f(t) \lesssim t^\alpha$.
\end{itemize}
Also, we note that $\varrho_{\rm lot} \equiv 0$ if $c$ is constant, and $\varrho_f \equiv 0$ if $f$ is constant.
Finally, while it may seem more natural to consider the equation
\[
\dv (\mathbf A D u +\tilde{\vec b}u)+\vec b\cdot Du + cu=\dv \tilde{\vec f} + f
\]
instead of \eqref{eq_main-d}, doing so would require additional regularity assumptions on $\tilde{\vec b}$ and $\tilde{\vec f}$ to ensure that the equation reduces properly to \eqref{eq_main-d}.

\medskip
In the following theorem, for $\xi \in \bR^d$, the notation $\langle D^3u(x^o), \xi \rangle$ denotes a $d\times d$ matrix whose $(i,j)$-th entry is given by:
\[
\langle D^3u(x^o), \xi \rangle_{i,j}=D_{ijk}u(x^o)\xi^k.
\]
We also define the norm of the third-order derivative tensor as
\[
\abs{D^3u(x^o)}=\sup_{\abs{\xi}=1} \,\abs{D_{ijk}u(x^o)\xi^i \xi^j \xi^k}.
\]
This is equivalent (up to a constant depending only on the dimension) to the supremum of $\abs{\langle D^3u(x^o), e \rangle}$, where the norm is taken in the sense of operator norm for matrices, over all unit vectors $e \in \bR^d$.

\begin{theorem}			\label{thm02}
Let $\Omega \subset \bR^d$ be a domain.
Consider a symmetric matrix $\mathbf A=(a^{ij})$ satisfying the uniform ellipticity condition:
\begin{equation}					\label{ellipticity-nd}
\lambda \abs{\xi}^2 \le a^{ij}(x) \xi^i \xi^j \le \Lambda \abs{\xi}^2 \quad\text{for all }\; x\in\Omega,\;\; \xi\in\mathbb{R}^d,
\end{equation}
where $0<\lambda \le \Lambda$ are constants.
Additionally, we assume that $\mathbf A$, $\vec b$, $D \vec b$, $c$ and $Dc$ all belong to $\mathrm{DMO}$.
Let $u \in C^2_{\rm loc}(\Omega)$ be a solution of the equation
\begin{equation}		\label{eq_main-nd}
\mathrm{L}u=\tr (\mathbf A D^2u)+\vec b\cdot Du + cu= f \; \text{ in }\;\Omega,
\end{equation}
where $f$ and $Df$ also belong to $\mathrm{DMO}$.
If $D^2u(x^o)=0$ for some $x^o\in\Omega$, then $D^2u$ is differentiable at $x^o$.
More precisely, there exist positive constants $r_0$ and $C$, depending only on $d$, $\lambda$, $\Lambda$, and the coefficients of $\mathrm{L}$, such that for $r:=\min\{r_0, \frac12 \dist(x^o,\partial\Omega)\}$, the third derivative $D^3u$ at $x^o$ satisfies the estimate:
\[
\abs{D^3 u (x^o)} \le C \left(\frac{1}{r} \fint_{B_{2r}(x^o)} \abs{D^2u}+ \varrho_{\rm lot}(r) + \varrho_{Df}(r) \right).
\]
Here, the terms $\varrho_{\cdots}(\cdot)$ represent moduli of continuity: 
$\varrho_{\rm coef}(\cdot)$ is determined by the coefficients $\mathbf A$, $\vec b$, and $c$; $\varrho_{\rm lot}(\cdot)$ depends on $\norm{u}_{L^\infty(B_r(x^o))}$, $\norm{Du}_{L^\infty(B_r(x^o))}$, $D\vec b$, and $Dc$; and $\varrho_{Df}(\cdot)$ is determined by $Df$.
Furthermore, for $0<\abs{x-x^o}<r$, the following estimate holds:
\begin{multline*}
\frac{\Abs{D^2u(x)-\langle D^3u(x^o), x-x^o \rangle }}{\abs{x-x^o}}
\le C\varrho_{\rm coef}(\abs{x-x^o}) \left\{\frac{1}{r} \fint_{B_{2r}(x^o)} \abs{D^2u} + \varrho_{\rm lot}(r) +\varrho_{Df}(r) \right\}\\
+C \varrho_{\rm lot}(\abs{x-x^o})+C \varrho_{Df}(\abs{x-x^o}).
\end{multline*}
\end{theorem}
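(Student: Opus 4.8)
The plan is to run the Campanato-type iteration used to prove Theorem~\ref{thm01}, but one derivative order higher and adapted to the non-divergence structure: the roles of ``compare $Du$ with affine functions'' and ``$H^1$-type estimates for the frozen divergence operator'' are now played by ``compare $D^2u$ with affine functions'' and ``interior $W^{2,p}$ estimates for the frozen non-divergence operator.'' After translating so that $x^o=0$ and fixing $r=\min\{r_0,\tfrac12\dist(x^o,\partial\Omega)\}$, introduce for $0<s\le r$ the normalized excess
\[
\phi(s):=\inf_{M,N}\ \frac1s\left(\fint_{B_s}\bigl|D^2u(x)-M-\langle N,x\rangle\bigr|^2\,dx\right)^{1/2},
\]
the infimum over symmetric matrices $M$ and fully symmetric $3$-tensors $N$ (equivalently, over cubic polynomials $Q$, via $M=D^2Q(0)$ and $\langle N,x\rangle=D^2Q(x)-D^2Q(0)$); let $\Lambda_s(x)=M_s+\langle N_s,x\rangle$ denote a minimizer. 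The goal is to prove $\sum_j\phi(\kappa^jr)<\infty$ for a fixed dilation factor $\kappa\in(0,\tfrac14)$; this forces the affine maps $\Lambda_s$ to converge as $s\to0$, necessarily to a map $\langle N_0,x\rangle$ with vanishing constant part (the constant parts tend to $D^2u(0)=0$ since $D^2u$ is continuous), after which one identifies $N_0=D^3u(0)$ and extracts both displayed estimates from tails of the resulting series.

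For one step of the iteration, fix $s$, set $\bar{\mathbf A}_s=(\mathbf A)_{B_s}$ and $\bar{\mathrm L}_s=\tr(\bar{\mathbf A}_sD^2\cdot)$, and rewrite $\mathrm Lu=f$ as $\bar{\mathrm L}_su=G_s$, where $G_s=f-\vec b\cdot Du-cu-\tr\bigl((\mathbf A-\bar{\mathbf A}_s)D^2u\bigr)$. Let $\ell_s$ be the best affine approximation of $G_s$ on $B_s$ and let $w$ solve $\bar{\mathrm L}_sw=\ell_s$ in $B_s$ with $w=u$ on $\partial B_s$. Because $\bar{\mathrm L}_s$ has constant coefficients and $\ell_s$ is affine, every second derivative $D_{pq}w$ is $\bar{\mathrm L}_s$-harmonic, so interior estimates give $\|D^4w\|_{L^\infty(B_{s/2})}\le Cs^{-2}\inf_{M,N}\fint_{B_s}|D^2w-M-\langle N,\cdot\rangle|$; a first-order Taylor expansion of $D^2w$ at $0$ (whose best affine approximant matches value and gradient there, leaving a quadratic remainder) then yields the model decay $\phi_w(\kappa s)\le C\kappa\,\phi_w(s)\le\tfrac12\phi_w(s)$ once $\kappa$ is fixed small. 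Next, $v=u-w$ solves $\bar{\mathrm L}_sv=G_s-\ell_s$ with zero boundary data, so the interior $W^{2,p_0}$ estimate for the frozen operator gives $\|D^2v\|_{L^{p_0}(B_s)}\le C\|G_s-\ell_s\|_{L^{p_0}(B_s)}$ for a fixed $p_0\in(1,2)$, and $G_s-\ell_s$ is estimated term by term: the deviation of $f$ from its best affine approximation is $\le Cs\,\omega_{Df}(s)$; that of $\vec b\cdot Du+cu$ is $\le Cs\,\varpi_{\rm lot}(s)$ for a modulus governed by $\|u\|_{L^\infty(B_r)}$, $\|Du\|_{L^\infty(B_r)}$ and the oscillations of $D\vec b$, $Dc$ --- here it is essential that $D^2u(0)=0$, which makes $Du(x)-Du(0)$ of quadratic order on $B_s$ so these products really are close to affine; and the principal term $\tr\bigl((\mathbf A-\bar{\mathbf A}_s)D^2u\bigr)$ is treated by the standard DMO device of \cite{DK17} (subtract a cubic so that $D^2$ of the remainder is small in mean, split $D^2u=(D^2u-\Lambda_s)+\Lambda_s$, and sum over dyadic subscales), producing a bound $Cs\bigl(\omega_{\mathbf A}(s)\sup_{\rho\le s}\phi(\rho)+(\text{dyadic sum of }\omega_{\mathbf A})\bigr)$. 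Altogether $\phi(\kappa s)\le\tfrac12\phi(s)+C\varpi(s)$ for a modulus $\varpi$ built from $\omega_{\mathbf A},\omega_{\vec b},\omega_c,\omega_{D\vec b},\omega_{Dc},\omega_{Df}$ and the indicated norms.

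Iterating along $s=\kappa^jr$ and using that all input moduli are Dini (so $\sum_j\varpi(\kappa^jr)\lesssim\int_0^r\varpi(t)\,t^{-1}\,dt<\infty$, with the $\sup_{\rho\le s}\phi$ term absorbed once $r_0$ is small) yields $\sum_j\phi(\kappa^jr)<\infty$. Since two affine functions close in $L^2(B_\rho)$-mean have constant parts close up to a factor~$1$ and linear parts close up to a factor $\rho^{-1}$, one gets $|M_{\kappa^{j+1}r}-M_{\kappa^jr}|\lesssim\kappa^jr\,\phi(\kappa^jr)$ and $|N_{\kappa^{j+1}r}-N_{\kappa^jr}|\lesssim\phi(\kappa^jr)$, hence $N_{\kappa^jr}\to N_0$ and $M_{\kappa^jr}\to0$. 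To upgrade the $L^2$-averaged control to the pointwise estimate, apply the interior DMO second-derivative estimate for non-divergence equations (available since $u\in C^2_{\rm loc}$ and $\mathbf A\in\mathrm{DMO}$) to $v_J:=u-Q_{\kappa^Jr}$, where $Q_{\kappa^Jr}$ is a cubic with $D^2Q_{\kappa^Jr}=\Lambda_{\kappa^Jr}$; this gives $\|D^2u-\Lambda_{\kappa^Jr}\|_{L^\infty(B_{\kappa^Jr/2})}\lesssim\kappa^Jr\bigl(\phi(\kappa^Jr)+(\text{data moduli at scale }\kappa^Jr)\bigr)$. Combining with $|M_{\kappa^Jr}|\le\sum_{j\ge J}|M_{\kappa^{j+1}r}-M_{\kappa^jr}|$ and $|N_{\kappa^Jr}-N_0|\le\sum_{j\ge J}|N_{\kappa^{j+1}r}-N_{\kappa^jr}|$, and evaluating the series tails for $|x|\in[\kappa^{J+1}r,\kappa^Jr)$, gives the stated bound on $|D^2u(x)-\langle N_0,x\rangle|/|x|$ with $N_0=D^3u(0)$; letting $J\to\infty$ (and bounding $\phi(r)\le Cr^{-1}\fint_{B_{2r}}|D^2u|$, absorbing $\varrho_{\rm coef}(r)$ into $C$) gives the bound on $|D^3u(0)|$. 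The Hölder specializations follow since for $\omega(t)\lesssim t^\alpha$ the geometric sums $\sum_{\kappa^jr\le t}\omega(\kappa^jr)$ are comparable to $t^\alpha$.

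The main obstacle is estimating the principal error term $\tr\bigl((\mathbf A-\bar{\mathbf A}_s)D^2u\bigr)$ with a modulus sharp enough to yield $\varrho_{\rm coef}(t)\lesssim t^\alpha$ when $\mathbf A\in C^\alpha$: neither the crude bound $\|\mathbf A-\bar{\mathbf A}_s\|_{L^\infty}$ (not small) nor $\|\mathbf A-\bar{\mathbf A}_s\|_{L^{p_0}}\lesssim\omega_{\mathbf A}(s)^{1/p_0}$ (wrong exponent) suffices, so one must exploit the $L^1$-mean $\fint_{B_s}|\mathbf A-\bar{\mathbf A}_s|\le\omega_{\mathbf A}(s)$ together with the smallness of the excess part $D^2u-\Lambda_s$ (and the vanishing $D^2u(0)=0$) through the dyadic decomposition, and the bookkeeping that keeps $\varrho_{\rm coef},\varrho_{\rm lot},\varrho_{Df}$ separate and all series convergent is the most delicate part. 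A secondary difficulty is the $L^{p_0}\to L^\infty$ upgrade in the final step, which again leans on the interior DMO second-derivative theory and on $u\in C^2_{\rm loc}$.
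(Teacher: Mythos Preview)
Your overall architecture is correct: run the Campanato iteration of Section~\ref{sec2} one order higher, compare $D^2u$ with affine maps $\langle \mathcal S,x\rangle+\mathbf P$, show the $\mathcal S_j$ form a Cauchy sequence, and upgrade to $L^\infty$ at the end. But the specific analytic tool you have chosen for the perturbation step does not work, and the gap is exactly the one you flag as ``the main obstacle.''

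You freeze coefficients and estimate the correction $v=u-w$ via an interior $W^{2,p_0}$ bound with $p_0\in(1,2)$, which forces you to control $\|(\mathbf A-\bar{\mathbf A}_s)D^2u\|_{L^{p_0}(B_s)}$. The DMO hypothesis only gives $\fint_{B_s}|\mathbf A-\bar{\mathbf A}_s|\le\omega_{\mathbf A}(s)$ in $L^1$; there is no mechanism that converts this into an $L^{p_0}$ bound of the same order for $p_0>1$ (John--Nirenberg does not apply, and interpolation with $L^\infty$ loses the exponent). Your proposed fix---split $D^2u=(D^2u-\Lambda_s)+\Lambda_s$ and sum over dyadic subscales---does not close: the piece $(\mathbf A-\bar{\mathbf A}_s)\Lambda_s$ is fine in $L^1$ but still uncontrolled in $L^{p_0}$, and the piece $(\mathbf A-\bar{\mathbf A}_s)(D^2u-\Lambda_s)$ pairs an $L^1$-small factor with an $L^2$-small factor, which H\"older cannot combine into an $L^{p_0}$ bound carrying a full factor of $\omega_{\mathbf A}(s)$. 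The ``standard DMO device of \cite{DK17}'' you invoke is in fact the \emph{weak-$(1,1)$} estimate for the frozen operator (Lemma~\ref{lem02} here), which yields control of $D^2w$ in $L^p$ for $p<1$ directly from the $L^1$ norm of the right-hand side. That is why the paper defines the excess $\Phi$ with exponent $\tfrac12$ rather than $2$: then the perturbation term is bounded by $C\omega_{\mathbf A}(s)\|D^2u\|_{L^\infty(B_s)}$ with the correct linear dependence on $\omega_{\mathbf A}$, and the quantity $\|D^2u\|_{L^\infty(B_s)}/s$ is tracked through the iteration via $\mathrm M_j(r_0)$ and absorbed once $r_0$ is small (Lemma~\ref{lem1548sun_nd}).

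A second, smaller issue: your treatment of the lower-order terms asserts that $D^2u(0)=0$ makes $Du(x)-Du(0)$ ``of quadratic order.'' Continuity of $D^2u$ with $D^2u(0)=0$ only gives $o(|x|)$; quadratic decay is equivalent to the existence of $D^3u(0)$, which is the conclusion you are proving. The paper instead subtracts the first-order Taylor approximants of $\vec b$, $c$, and $f$ (using $(D\vec b)_{B_r}$, $(Dc)_{B_r}$, $(Df)_{B_r}$) so that what remains is affine plus a term controlled by Poincar\'e and $\|D^2u\|_{L^\infty(B_s)}$; see the definitions of $\widetilde{\vec b\cdot Du}$, $\widetilde{cu}$, $\widetilde f$ in \eqref{eq1604sat}. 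This avoids circularity and feeds cleanly into the $\mathrm M_j$ bookkeeping.
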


\smallskip
Here are a few brief remarks concerning Theorem \ref{thm02}:
First, under the assumptions of the theorem, any strong solution $u$ belongs to  $C^2_{\rm loc}(\Omega)$.
In fact, it is not necessary to assume that $D\vec b$, $Dc$, and $Df$ belong to $\mathrm{DMO}$; see \cite[Theorem 1.5]{DEK18} and \cite[Theorem 1.6]{DK17}.
Second, the precise dependence of the moduli of continuity $\rho_{\cdots}(t)$ on the parameters specified in the theorem is discussed in detail in Section \ref{sec5}. Specifically:
\begin{itemize}
\item
If $\mathbf A \in C^\alpha$ for some $\alpha \in (0,1)$, then $\varrho_{\rm coef}(t) \lesssim t^\alpha$.
\item
If $\vec b \in C^{1,\alpha}$ and $c \in C^{1,\alpha}$ for some $\alpha \in (0,1)$, then $\varrho_{\rm lot}(t) \lesssim t^\alpha$.
\item
If $f \in C^{1,\alpha}$ for some $\alpha \in (0,1)$, then  $\varrho_{Df}(t) \lesssim t^\alpha$.
\end{itemize}
Additionally, we note that $\varrho_{\rm lot} \equiv 0$ if $D\vec b$ and $c$ are constant, and $\varrho_{Df} \equiv 0$ if $Df$ is constant.

\smallskip

The proof of Theorem \ref{thm01} is presented in Sections \ref{sec2} and \ref{sec3}. Section \ref{sec2} considers a simplified setting in which the lower-order coefficients $\vec b$ and $c$ are absent, and the inhomogeneous term $f$ vanishes. This preliminary case is intended to highlight the core ideas without the distraction of technical complications. The full proof of Theorem \ref{thm01} in the general setting is then developed in Section \ref{sec3}. Similarly, the proof of Theorem \ref{thm02} is given in Sections \ref{sec4} and \ref{sec5}. Section \ref{sec4} addresses the special case where $\vec b$, $c$, and $f$ are all zero, while Section \ref{sec5} treats the general case in full detail.

\section{Proof of Theorem \ref{thm01}: Simple case}			\label{sec2}
In this section, we consider an elliptic operator $L$ in the form
\[
Lu=  D_i(a^{ij} D_ju)=\dv (\mathbf A Du).
\]
We also assume that the inhomogeneous term $f$ is identically zero.
These assumptions allow us to present the main idea of the proof more transparently.

We aim to show that if  $Du(x^o)=0$ for some point $x^o \in \Omega$, then $Du$ is differentiable at $x^o$.
For simplicity, we assume, without loss of generality, that $x^o=0$ and $\dist(x^o, \partial\Omega) \ge 1$. This ensures $B_{1}(0) \subset \Omega$.

Let  $\bar{\mathbf A}:=(\mathbf A)_{B_r}$ denote the average of $\mathbf A$ over the ball $B_r=B_r(0)$, where $r \in (0,\frac12]$.
We decompose $u$ as $u=v+w$, where $w \in W^{1,p}_0(B_r)$ (for some $p>1$) is the weak solution to the problem
\[
\dv(\bar{\mathbf A}Dw) = -\dv((\mathbf{A}-\bar{\mathbf A})Du)\;\mbox{ in }\; B_r,\quad
w=0  \;\mbox{ on }\;\partial B_r.
\]
\begin{lemma}			\label{lem01}
Let $B=B_1(0)$. Let $\mathbf A_0$ be a constant matrix satisfying condition \eqref{ellipticity-d}. For $\vec f \in L^p(B)$ and $g \in L^p(B)$ with some $p>1$, let $u \in W^{1,p}_0(B)$ be the weak solution to the problem:
\[
\left\{
\begin{aligned}
\dv(\mathbf A_0 Du)&= \dv \vec f+ g\;\mbox{ in }\; B,\\
u &= 0 \;\mbox{ on } \; \partial B.
\end{aligned}
\right.
\]
Then, for any $t>0$, we have
\[
\Abs{\{x \in B : \abs{Du(x)} > t\}} \le \frac{C}{t} \left(\int_{B} \abs{\vec f}+ \abs{g} \right),
\]
where $C=C(d, \lambda, \Lambda)$.
\end{lemma}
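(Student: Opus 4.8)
The statement is a weak-type $(1,1)$ estimate for the gradient of the solution to a constant-coefficient divergence-form equation on the unit ball with zero boundary data, and the natural route is a Calderón--Zygmund/duality argument combined with the elementary observation that for constant coefficients the equation can be reduced to the Laplacian. First I would normalize: since $\mathbf A_0$ is constant and satisfies \eqref{ellipticity-d}, I can write $\mathbf A_0 = \mathbf S^2$ (after symmetrizing, or more carefully handle the antisymmetric part separately) and change variables $y = \mathbf S^{-1} x$ so that the operator becomes the Laplacian on an ellipsoid comparable to a ball; the $L^p$ norms of $\vec f$ and $g$ transform with constants depending only on $d,\lambda,\Lambda$, and a cutoff/covering reduces matters to a ball. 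So it suffices to prove the estimate for $\Delta u = \dv \vec f + g$ with $u \in W^{1,p}_0$.

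The core step is to establish the endpoint bound $\|Du\|_{L^{1,\infty}(B)} \le C(\|\vec f\|_{L^1(B)} + \|g\|_{L^1(B)})$. I would split $u = u_1 + u_2$ where $\Delta u_1 = \dv \vec f$ and $\Delta u_2 = g$ with zero boundary data. For $u_1$: the solution operator $\vec f \mapsto Du_1$ is, modulo the boundary, a singular integral operator of Calderón--Zygmund type (its kernel is the second derivative of the Green's function of $B$, which satisfies the standard size and Hörmander regularity conditions since $B$ is smooth), hence bounded from $L^1$ to $L^{1,\infty}$; alternatively one invokes the known $L^1 \to L^{1,\infty}$ estimate for $\nabla(-\Delta)^{-1}\dv$ on bounded smooth domains. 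For $u_2$: here $Du_2 = \nabla(-\Delta)^{-1} g$, and $\nabla(-\Delta)^{-1}$ on $B$ has a kernel of order $|x-y|^{1-d}$, which is weak-type $(1,1)$ against $L^1$ data by the classical fractional-integration endpoint estimate (this is where the hypothesis $d \ge 2$, implicit in the paper, matters; for $d=2$ one uses that $|x-y|^{-1}$ is still in weak-$L^2$ locally, more than enough). Adding the two contributions and using $|\{|Du|>t\}| \le |\{|Du_1|>t/2\}| + |\{|Du_2|>t/2\}|$ gives the claim with the asserted dependence $C = C(d,\lambda,\Lambda)$.

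The main obstacle is handling the boundary cleanly: the Green's function of a ball is explicit, but verifying the Calderón--Zygmund kernel conditions (and the weak-type bound) \emph{uniformly up to $\partial B$} requires some care, since the naive whole-space kernels only work in the interior. Two ways around this: (i) use the explicit Poisson/Green kernel for the ball and check the size and cancellation estimates directly — tedious but routine; or (ii) reflect or extend — near the boundary, flatten it and use an even/odd reflection to reduce to an interior estimate, absorbing the resulting commutator terms into lower-order pieces. I would take route (i) for definiteness, or simply cite a standard reference for the weak-$(1,1)$ estimate for the Dirichlet problem on a ball (e.g. the $W^{1,1}$/Hardy-space theory for divergence-form equations). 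Everything else — the change of variables, the triangle inequality splitting of the level set, and tracking the constants — is elementary and I would not spell it out in detail.
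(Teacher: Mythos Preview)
Your approach is correct and is the standard Calder\'on--Zygmund route to this weak-type estimate; the paper itself gives no proof but simply defers to \cite[Lemma 3.3]{CDKK24} and \cite[Lemma 2.2]{DK17}, where essentially this argument (Green's function representation plus CZ kernel bounds) is carried out. One minor correction worth noting: your change-of-variables reduction to the Laplacian maps the ball $B$ to an ellipsoid, and the remark that ``a cutoff/covering reduces matters to a ball'' does not work cleanly for a zero-Dirichlet boundary value problem, since the boundary condition is global. It is simpler to skip that normalization entirely and work directly with the Green's function $G(x,y)$ of the constant-coefficient operator $\dv(\mathbf A_0 D\,\cdot)$ on $B$: the pointwise estimates $\abs{D_x G}\lesssim \abs{x-y}^{1-d}$ and $\abs{D_x D_y G}\lesssim \abs{x-y}^{-d}$, together with the H\"ormander regularity condition, hold with constants depending only on $d,\lambda,\Lambda$, and these feed directly into your splitting $u=u_1+u_2$ and the subsequent CZ argument for the $\vec f$ piece and fractional-integration endpoint for the $g$ piece.
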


\begin{proof}
Refer to the proof of \cite[Lemma 3.3]{CDKK24} and \cite[Lemma 2.2]{DK17}.
\end{proof}

By Lemma \ref{lem01}, we have $Dw \in L^{\frac12}(B_r)$ and
\begin{equation}			\label{eq1622thu}
\left(\fint_{B_r} \abs{Dw}^{\frac12}\right)^{2} \le C \omega_{\mathbf A}(r) \norm{Du}_{L^\infty(B_r)},
\end{equation}
where $C=C(d,\lambda, \Lambda)$.
Although $Dw \in L^p(B_r)$ for any $p \in (0,1)$, we use $p=\frac12$ for simplicity. For more details, refer to the proof of Theorem 1.5 in \cite{DK17}.

Note that $v=u-w$ satisfies the following equation:
\[
\dv(\bar{\mathbf A} Dv)=0\quad\text{in }\;B_r.
\]
By interior regularity estimates for solutions of elliptic equations with constant coefficients, we know that $v \in C^{\infty} (B_r)$.
In particular, we have
\begin{equation}			\label{eq1232thu}
\norm{D^2v}_{L^\infty(B_{r/2})} \le \frac{C}{r^2} \left(\fint_{B_r} \abs{v}^{\frac12}\right)^{2},
\end{equation}
where $C$ is a constant depending only on $d$, $\lambda$, and $\Lambda$.
Moreover, this estimate remains valid if $v$ is replaced by $Dv-\vec l$, where $\vec l$ is any affine function of the form:
\[
\vec l(x)=x_1 \vec c_1 + \cdots +x_d \vec c_d + \vec p,\quad \text{with }\;\vec c_1,\ldots, \vec c_d, \vec p \in \bR^d. 
\]
We define the set $\mathfrak{A}$ as
\[
\mathfrak{A}=\left\{\vec l(x)=\mathbf{S}x + \vec p: \mathbf S \in \mathbb{S}^d,\; \vec p \in \mathbb{R}^d \right\},
\]
where $\mathbb{S}^d$ denotes the set of all real symmetric $d \times d$ matrices.

Therefore, for any $\vec l \in \mathfrak{A}$, we derive from \eqref{eq1232thu} that
\begin{equation}			\label{eq1228thu}
\norm{D^3v}_{L^\infty(B_{r/2})} \le \frac{C}{r^2} \left(\fint_{B_r} \abs{Dv-\vec l}^{\frac12}\right)^{2}.
\end{equation}

On the other hand, by Taylor's theorem, for any $\rho \in (0, r]$, we have
\[
\sup_{x\in B_\rho}\;\abs{Dv(x)-D^2v(0)x - Dv(0)}  \le C(d) \norm{D^3v}_{L^\infty(B_\rho)} \,\rho^2.
\]

Consequently, for any $\kappa \in (0, \frac12)$ and  $\vec l \in \mathfrak{A}$, we obtain
\begin{equation}		\label{eq1616fri}
\left(\fint_{B_{\kappa r}} \abs{Dv-D^2v(0)x -Dv(0)}^{\frac12}\right)^{2} \le C \norm{D^3v}_{L^\infty(B_{r/2})} (\kappa r)^2 \le C \kappa^2 \left(\fint_{B_r} \abs{Dv-\vec l}^{\frac12}\right)^{2},
\end{equation}
where $C=C(d,\lambda, \Lambda)>0$.

Now, define the function
\begin{equation}			\label{eq1100sat}
\varphi(r):=\frac{1}{r} \,\inf_{\vec l \in \mathfrak{A}} \left(\fint_{B_r} \abs{Du-\vec l}^{\frac12}\right)^{2}.
\end{equation}
Since $u=v+w$, we apply the quasi-triangle inequality, together with estimates \eqref{eq1622thu} and \eqref{eq1616fri}, to obtain
\begin{align}			\nonumber
\kappa r \varphi(\kappa r) & \le \left(\fint_{B_{\kappa r}} \abs{Du-D^2v(0)x-Dv(0)}^{\frac12}\right)^{2}\\					\nonumber
&\le C\left(\fint_{B_{\kappa r}} \abs{Dv-D^2v(0)x-Dv(0)}^{\frac12}\right)^{2} + C \left(\fint_{B_{\kappa r}} \abs{Dw}^{\frac12}\right)^{2}\\
					\nonumber
& \le  C \kappa^2 \left(\fint_{B_r} \abs{Dv-\vec l}^{\frac12}\right)^{2} + C \kappa^{-2d}\left(\fint_{B_{r}} \abs{Dw}^{\frac12}\right)^{2}\\	
					\nonumber
& \le C \kappa^2 \left(\fint_{B_r} \abs{Du-\vec l}^{\frac12}\right)^{2} + C \left(\kappa^2+\kappa^{-2d}\right)\left(\fint_{B_{r}} \abs{Dw}^{\frac12}\right)^{2}\\
					\label{eq0224sat}
& \le C \kappa^2 \left(\fint_{B_r} \abs{Du-\vec l}^{\frac12}\right)^{2} + C \left(\kappa^2+\kappa^{-2d}\right) \omega_{\mathbf A}(r) \norm{Du}_{L^\infty(B_r)}.
\end{align}

Let $\beta \in (0,1)$ be an arbitrary but fixed number.
With this $\beta$, choose $\kappa=\kappa(d, \lambda,\Lambda, \beta) \in (0, \frac12)$ such that $C \kappa \le  \kappa^{\beta}$.
Then, inequality \eqref{eq0224sat} yields
\begin{equation}			\label{eq1110sat}
\varphi(\kappa r) \le \kappa^\beta \varphi(r) + C \omega_{\mathbf A}(r)\,\frac{1}{r} \norm{Du}_{L^\infty(B_r)},
\end{equation}
where $C=C(d, \lambda, \Lambda, \kappa)=C(d, \lambda, \Lambda, \beta)$.

Let $r_0 \in (0, \frac12]$ be a number to be chosen later.
By iterating \eqref{eq1110sat}, we obtain, for each $j=1,2,\ldots$,
\[
\varphi(\kappa^j r_0) \le \kappa^{\beta j} \varphi(r_0) + C \sum_{i=1}^{j} \kappa^{(i-1)\beta} \omega_{\mathbf A}(\kappa^{j-i} r_0)\,\frac{1}{\kappa^{j-i} r_0}\, \norm{Du}_{L^\infty(B_{\kappa^{j-i} r_0})}.
\]

Define
\begin{equation}			\label{eq1244sat}
M_j(r_0):=\max_{0\le i < j}\, \frac{1}{\kappa^i r_0} \,\norm{Du}_{L^\infty(B_{\kappa^i r_0})} \quad \text{for }j=1,2, \ldots,
\end{equation}
so that we can estimate
\begin{equation}			\label{eq1114sat}
\varphi(\kappa^j r_0) \le \kappa^{\beta j} \varphi(r_0) + C M_j(r_0) \tilde \omega_{\mathbf A}(\kappa^j r_0),
\end{equation}
where, following \cite[(2.15)]{DK17}, we define
\begin{equation}			\label{eq1245sat}
\tilde\omega_{\mathbf A}(t):= \sum_{i=1}^\infty \kappa^{i\beta} \left\{ \omega_{\mathbf A}(\kappa^{-i} t) [ \kappa^{-i} t \le 1] +\omega_{\mathbf A}(1)[\kappa^{-i} t >1]\right\}.
\end{equation}
Here, we adopt Iverson bracket notation: for any statement $P$, we set $[P] = 1$ if $P$ is true, and $[P] = 0$ otherwise.
We emphasize that $\tilde\omega_{\mathbf A}(t)$ satisfies the Dini condition whenever $\omega_{\mathbf A}(t)$ does.
In particular, if $\mathbf{A} \in C^\alpha$ for some $\alpha \in (0,1)$, then choosing $\beta \in (\alpha,1)$ yields $\tilde\omega_{\mathbf{A}}(t) \lesssim t^\alpha$ for all $t \in (0,1]$.
Moreover, we note that $\omega_{\mathbf A}(t) \lesssim \tilde \omega_{\mathbf A}(t)$ for $t \in (0,1]$.
Additionally, we will use the following estimate (see \cite[Lemma 2.7]{DK17}):
\begin{equation}				\label{rmk1147}
\sum_{j=0}^\infty \tilde \omega_{\mathbf A}(\kappa^j r) \lesssim \int_0^{r} \frac{\tilde\omega_{\mathbf A}(t)}{t}\,dt,
\end{equation}
which, in particular, implies
\begin{equation}				\label{rmk1147_2}
\tilde \omega_{\mathbf A}(r) \lesssim \int_0^r \frac{\tilde \omega_{\mathbf A}(t)}{t}\,dt.
\end{equation}

For any fixed $r$, the infimum in \eqref{eq1100sat} is achieved by some $\vec l \in \mathfrak{A}$.
For $j=0,1,2,\ldots$, let $\mathbf{S}_j \in \mathbb{S}^d$ and $\vec p_j \in \bR^d$ be such that:
\begin{equation}			\label{eq0203tue}
\varphi(\kappa^j r_0)= \frac{1}{\kappa^j r_0}
\left(\fint_{B_{\kappa^j r_0}} \bigabs{Du(x)-\mathbf{S}_j  x -\vec p_j}^{\frac12}dx\right)^{2}.
\end{equation}

From \eqref{eq1100sat} and H\"older's inequality, we have
\begin{equation}			\label{eq0538tue}
\varphi(r_0) \le \frac{1}{r_0} \fint_{B_{r_0}} \abs{Du}.
\end{equation}
Combining \eqref{eq1114sat} with \eqref{eq0538tue}, we obtain the estimate
\begin{equation}\label{eq4.38}
\varphi(\kappa^j r_0) \le  \frac{\kappa^{\beta j}}{r_0} \fint_{B_{r_0}} \abs{Du}+ CM_j(r_0) \tilde \omega_{\mathbf A}(\kappa^j r_0).
\end{equation}

Next, observe that for each $j=0,1,2,\ldots$, we have
\begin{equation}\label{eq0218tue}
\fint_{B_{\kappa^j r_0}} \bigabs{\mathbf{S}_j  x +\vec p_j}^{\frac12}dx
\le \fint_{B_{\kappa^j r_0}} \bigabs{Du-\mathbf{S}_j x -\vec p_j}^{\frac12}dx+
\fint_{B_{\kappa^j r_0}} \abs{Du}^{\frac12}\le 2\fint_{B_{\kappa^j r_0}} \abs{Du}^{\frac12}.
\end{equation}
Furthermore, we observe that
\[
\bigabs{\vec p_j}^{\frac12} = \bigabs{\mathbf{S}_j x + \vec p_j - 2(\mathbf{S}_j (x/2) + \vec p_j)}^{\frac12} \le  \bigabs{\mathbf{S}_j x + \vec p_j}^{\frac12} + 2^{\frac12}\, \bigabs{\mathbf{S}_j (x/2) + \vec p_j}^{\frac12}.
\]
In addition, we have
\[
\fint_{B_{\kappa^j r_0}} \bigabs{\mathbf{S}_j (x/2) + \vec p_j}^{\frac12}dx  = \frac{2^d}{\abs{B_{\kappa^j r_0}}}\int_{B_{\kappa^j r_0/2}} \bigabs{\mathbf{S}_j x + \vec p_j}^{\frac12} dx \le 2^d\fint_{B_{\kappa^j r_0}} \bigabs{\mathbf{S}_j x + \vec p_j}^{\frac12}dx.
\]
Combining these estimates, we deduce
\begin{equation}		\label{eq0213tue}
\bigabs{\vec p_j} \le C \left(\fint_{B_{\kappa^j r_0}}\bigabs{\mathbf{S}_j x + \vec p_j }^{\frac12}\right)^{2}\le C \left(\fint_{B_{\kappa^j r_0}} \abs{Du}^{\frac12}\right)^{2},\quad j=0,1,2,\ldots.
\end{equation}
Since $u \in C^1(\overline B_{1/2})$ by \cite[Theorem 1.5]{DK17} and $Du(0)=0$, the estimate \eqref{eq0213tue} immediately yields
\begin{equation}			\label{eq0215tue}
\lim_{j \to \infty} \vec p_j =0.
\end{equation}

\subsection*{Estimate of $\mathbf{S}_j$}
By the quasi-triangle inequality, we have
\[
\bigabs{(\mathbf{S}_j - \mathbf{S}_{j-1})x + (\vec p_j -\vec p_{j-1})}^{\frac12} \le \bigabs{Du-\mathbf{S}_j x- \vec p_j}^{\frac12} + \bigabs{Du-\mathbf{S}_{j-1} x- \vec p_{j-1}}^{\frac12}.
\]
Taking the average over $B_{\kappa^j r_0}$ and using the fact that $\abs{B_{\kappa^{j-1} r_0}}/ \abs{B_{\kappa^j r_0}} = \kappa^{-d}$, we obtain
\begin{equation}			\label{eq1949sat}
\frac{1}{\kappa^j r_0}
\left(\fint_{B_{\kappa^j r_0}}\bigabs{(\mathbf{S}_j - \mathbf{S}_{j-1})x + (\vec p_j -\vec p_{j-1})}^{\frac12}dx\right)^{2} \le C \varphi(\kappa^j r_0) + C \varphi(\kappa^{j-1} r_0)
\end{equation}
for $j=1,2,\ldots$, where $C=C(d, \lambda, \Lambda, \beta)$.

Next, observe that
\[
\bigabs{\vec p_j-\vec p_{j-1}}^{\frac12} = \bigabs{(\mathbf{S}_j -\mathbf{S}_{j-1})x + (\vec p_j-\vec p_{j-1}) - 2\left((\mathbf{S}_j -\mathbf{S}_{j-1})(x/2) + (\vec p_j-\vec p_{j-1})\right)}^{\frac12}.
\]
Using the quasi-triangle inequality, we obtain
\[
\bigabs{\vec p_j-\vec p_{j-1}}^{\frac12} \le \bigabs{(\mathbf{S}_j -\mathbf{S}_{j-1})x + (\vec p_j-\vec p_{j-1})}^{\frac12} + 2^{\frac12} \,\bigabs{(\mathbf{S}_j -\mathbf{S}_{j-1})(x/2) + (\vec p_j-\vec p_{j-1})}^{\frac12}.
\]
Moreover,
\[
\fint_{B_{\kappa^j r_0}} \abs{(\mathbf S_j -\mathbf S_{j-1})(x/2) + (\vec p_j-\vec p_{j-1})}^{\frac12}dx \leq
2^d\fint_{B_{\kappa^j r_0}} \abs{(\mathbf S_j -\mathbf S_{j-1})x + (\vec p_j-\vec p_{j-1})}^{\frac12}dx.
\]
Combining these estimates, we conclude that
\[
\bigabs{\vec p_j-\vec p_{j-1}} \le C(d) \left(\fint_{B_{\kappa^j r_0}}\bigabs{(\mathbf{S}_j - \mathbf{S}_{j-1})x + (\vec p_j -\vec p_{j-1})}^{\frac12}dx\right)^{2},\quad j=1,2,\ldots.
\]
Substituting this into \eqref{eq1949sat}, we derive
\begin{equation}				\label{eq2247sat}
\frac{1}{\kappa^j r_0} \bigabs{\vec p_j - \vec p_{j-1}} \le C \varphi(\kappa^j r_0)+C \varphi(\kappa^{j-1} r_0),\quad j=1,2,\ldots.
\end{equation}

On the other hand, for any matrix $\mathbf{S} \in \mathbb{S}^d$, we may write $\mathbf{S} = \abs{\mathbf{S}} \mathbf{U}$, where $\mathbf{U} \in \mathbb{S}^d$ satisfies $\abs{\mathbf{U}} = 1$.
Then we have
\begin{equation}		\label{eq0229tue}
\fint_{B_r} \abs{\mathbf{S} x}^{\frac12}dx  \ge \abs{\mathbf{S}}^{\frac12} \inf_{\abs{\mathbf U}=1}  \fint_{B_r}  \abs{\mathbf U x}^{\frac12} = \abs{\mathbf{S}}^{\frac12} \inf_{\abs{\mathbf U}=1} \fint_{B_1} r^{\frac12} \abs{\mathbf U x}^{\frac12} =C(d) r^{\frac12} \abs{\mathbf{S}}^{\frac12}.
\end{equation}

Then, by using \eqref{eq0229tue}, the quasi-triangle inequality, \eqref{eq1949sat}, \eqref{eq2247sat},  \eqref{eq4.38}, and the observation that $M_{j-1}(r_0) \le M_{j}(r_0)$, we obtain
\begin{align}
						\nonumber
\abs{\mathbf{S}_j-\mathbf{S}_{j-1}} &\le \frac{C}{\kappa^j r_0} \left(\fint_{B_{\kappa^j r_0}}\bigabs{(\mathbf{S}_j - \mathbf{S}_{j-1})x}^{\frac12}dx\right)^{2}\\
						\nonumber
& \le \frac{C}{\kappa^j r_0} \left(\fint_{B_{\kappa^j r_0}}\bigabs{(\mathbf{S}_j - \mathbf{S}_{j-1})x + (\vec p_j -\vec p_{j-1})}^{\frac12}dx\right)^{2} + \frac{C}{\kappa^j r_0} \bigabs{\vec p_j - \vec p_{j-1}}\\
						\nonumber
&\le C \varphi(\kappa^j r_0)+C \varphi(\kappa^{j-1} r_0)\\
								\label{eq2316sat}
&\le \frac{C\kappa^{\beta j}}{r_0}  \fint_{B_{r_0}} \abs{Du}+ CM_j(r_0)\left\{\tilde \omega_{\mathbf A}(\kappa^j r_0) + \tilde \omega_{\mathbf A}(\kappa^{j-1} r_0)\right\}.
\end{align}

To estimate $\abs{\mathbf{S}_0}$, we proceed similarly to \eqref{eq2316sat} by applying \eqref{eq0218tue} and \eqref{eq0213tue} with $j=0$, and using H\"older's inequality to obtain
\begin{equation}			\label{eq0230tue}
\abs{\mathbf{S}_0} \le \frac{C}{r_0} \fint_{B_{r_0}} \abs{Du},
\end{equation}
where $C=C(d, \lambda, \Lambda, \beta)$.

For $k>l\ge 0$, we derive from \eqref{eq2316sat} and the definition of $M_{j}(r_0)$ that
\begin{align}
						\nonumber
\abs{\mathbf{S}_k -\mathbf{S}_l} &\le \sum_{j=l}^{k-1}\, \abs{\mathbf{S}_{j+1}-\mathbf{S}_j} \le  \sum_{j=l}^{k-1}  \frac{C\kappa^{\beta(j+1)}}{r_0} \fint_{B_{r_0}} \abs{Du}+ CM_k(r_0) \sum_{j=l}^{k} \tilde \omega_{\mathbf A}(\kappa^j r_0) \\
						\label{eq0946tue}
&\le  \frac{C \kappa^{\beta(l+1)}}{(1-\kappa^\beta)r_0} \fint_{B_{r_0}} \abs{Du}+ CM_k(r_0) \int_0^{\kappa^l r_0} \frac{\tilde \omega_{\mathbf A}(t)}{t}\,dt,
\end{align}
where we used \eqref{rmk1147}.
In particular, by taking $k=j$ and $l=0$ in \eqref{eq0946tue}, and using \eqref{eq0230tue}, we obtain for $j=1,2,\ldots$ that
\begin{equation}			\label{eq0900tue}
\abs{\mathbf{S}_j} \le \abs{\mathbf{S}_j-\mathbf{S}_0} + \abs{\mathbf{S}_0} \le \frac{C}{r_0} \fint_{B_{r_0}} \abs{Du}+ CM_j(r_0) \int_0^{r_0} \frac{\tilde \omega_{\mathbf A}(t)}{t}\,dt.
\end{equation}

Similarly, we obtain from \eqref{eq2247sat} that for $k>l\ge 0$, we have
\begin{equation}			\label{eq0947tue}
\bigabs{\vec p_k -\vec p_l} \le C \frac{\kappa^{(\beta+1)(l+1)}}{1-\kappa^{\beta+1}} \fint_{B_{r_0}} \abs{Du}+ C \kappa^l r_0 M_k(r_0) \int_0^{\kappa^l r_0} \frac{\tilde \omega_{\mathbf A}(t)}{t}\,dt.
\end{equation}

\subsection*{Estimate for $\vec p_j$}
We shall derive improved estimates for $\abs{\vec p_j}$ using the following lemma, where we set
\[
v(x):=u(x)-\frac{1}{2} \mathbf{S}_j x\cdot x-\vec p_j\cdot x
\]
so that
\begin{equation}		\label{eq1852mon}
Dv(x)=Du(x)-\mathbf{S}_j x-\vec p_j.
\end{equation}
\begin{lemma}			\label{lem1702sat}
For $0<r \le \frac12$, we have
\[
\sup_{B_{r}}\, \abs{D v} \le C \left\{\left( \fint_{B_{2r}} \abs{Dv}^{\frac12} \right)^{2} + (r \abs{\mathbf{S}_j}+\abs{\vec p_j}) \int_0^r \frac{\tilde \omega_{\mathbf A}(t)}{t}\,dt\right\},
\]
where $C=C(d, \lambda, \Lambda, \omega_{\mathbf A},\beta)$.
\end{lemma}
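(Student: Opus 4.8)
The plan is to apply to $v$ the Campanato-type iteration that underlies the interior $C^1$ estimate for divergence-form equations with DMO coefficients (cf.\ \cite{DK17}), while tracking how the affine function $\mathbf S_j x+\vec p_j$ propagates through it. From $\dv(\mathbf A Du)=0$ in $B_1$ and \eqref{eq1852mon}, the function $v$ solves
\[
\dv(\mathbf A Dv)=-\dv\bigl(\mathbf A(\mathbf S_j x+\vec p_j)\bigr)\qquad\text{in }B_1 .
\]
The key structural point I would exploit is that freezing the coefficient turns this source into a \emph{constant}: on any ball $B_s(x_0)$, writing $\bar{\mathbf A}:=(\mathbf A)_{B_s(x_0)}$, one has $\dv\bigl(\bar{\mathbf A}(\mathbf S_j x+\vec p_j)\bigr)=\tr(\bar{\mathbf A}\mathbf S_j)$. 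Hence one may write $v=V_s+W_s$ on $B_s(x_0)$, where $W_s\in W^{1,1/2}_0(B_s(x_0))$ solves
\[
\dv(\bar{\mathbf A}\,DW_s)=\dv\bigl((\bar{\mathbf A}-\mathbf A)Du\bigr)\qquad\text{in }B_s(x_0)
\]
— here one uses the identity $(\bar{\mathbf A}-\mathbf A)Dv+(\bar{\mathbf A}-\mathbf A)(\mathbf S_j x+\vec p_j)=(\bar{\mathbf A}-\mathbf A)Du$ — so that the remainder $V_s:=v-W_s$ solves $\dv(\bar{\mathbf A}\,DV_s)=-\tr(\bar{\mathbf A}\mathbf S_j)$, a constant-coefficient equation with \emph{constant} right-hand side. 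Differentiating this equation annihilates the right-hand side, so each component of $DV_s$ is $\bar{\mathbf A}$-harmonic; consequently $DV_s$ obeys the full interior estimates for constant-coefficient equations — it is locally bounded, and its oscillation over $B_{\kappa s}(x_0)$ contracts by a factor comparable to $\kappa$ — with all the relevant quantities controlled by $\bigl(\fint_{B_s(x_0)}|DV_s|^{1/2}\bigr)^2$. The effect is that the $O(r|\mathbf S_j|+|\vec p_j|)$-sized part of the source leaves no trace on $Dv$; only its mean oscillation does.

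I would then fix $x_0\in\overline{B_r}$ and iterate over the dyadic scales $s=\kappa^k s_0$, $k\ge0$, with $\kappa$ as fixed in this section and $s_0\sim r$ chosen so that the balls appearing in the argument remain inside $B_{2r}$. By Lemma~\ref{lem01}, $\bigl(\fint_{B_s(x_0)}|DW_s|^{1/2}\bigr)^2\le C\fint_{B_s(x_0)}\bigl|(\bar{\mathbf A}-\mathbf A)Du\bigr|$, and after substituting $Du=Dv+\mathbf S_j x+\vec p_j$, using $|x-x_0|\le s$ for the part coming from $\mathbf S_j(x-x_0)$ and the defining property of $\omega_{\mathbf A}$ for the remaining terms, this is bounded by a small multiple of the excess of $Dv$ at scale $s$ plus $C\bigl(|(Dv)_{B_s(x_0)}|+K\bigr)\omega_{\mathbf A}(s)$, where $K:=r|\mathbf S_j|+|\vec p_j|$. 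Writing $\Phi(s):=\inf_{\vec q\in\bR^d}\bigl(\fint_{B_s(x_0)}|Dv-\vec q|^{1/2}\bigr)^2$ and combining this bound with the $\bar{\mathbf A}$-harmonic decay of $DV_s$, we obtain (for $\kappa$ sufficiently small)
\[
\Phi(\kappa s)\le\tfrac12\,\Phi(s)+C\bigl(|(Dv)_{B_s(x_0)}|+K\bigr)\omega_{\mathbf A}(s).
\]
Iterating this — exactly in the spirit of the estimates for $\mathbf S_j$ and $\vec p_j$ already carried out in this section, in particular using the telescoping bound $|(Dv)_{B_s(x_0)}|\le|(Dv)_{B_{s_0}(x_0)}|+C\sum_j\Phi(\kappa^j s_0)$ and summing by means of \eqref{rmk1147} — yields
\[
|Dv(x_0)|=\lim_{s\to0}\Bigl(\fint_{B_s(x_0)}|Dv|^{1/2}\Bigr)^2
\le C\,\Phi(s_0)+C\bigl(|(Dv)_{B_{s_0}(x_0)}|+K\bigr)\int_0^{r}\frac{\tilde\omega_{\mathbf A}(t)}{t}\,dt .
\]

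It remains to dominate $\Phi(s_0)$ and $|(Dv)_{B_{s_0}(x_0)}|$ by $\bigl(\fint_{B_{2r}}|Dv|^{1/2}\bigr)^2$, up to an additive multiple of $K$. Since $B_{s_0}(x_0)\subset B_{2r}$, the quantity $\Phi(s_0)$ is dominated by $C\bigl(\fint_{B_{2r}}|Dv|^{1/2}\bigr)^2$ by comparison of $L^{1/2}$-averages over nested balls. For $|(Dv)_{B_{s_0}(x_0)}|$, one writes $|(Dv)_{B_{s_0}(x_0)}|\le\fint_{B_{s_0}(x_0)}|Du|+CK$ and invokes the self-improving lower-integrability (reverse Hölder) inequality for $Du$ — valid because $u$ solves the homogeneous equation in $B_{2r}$ — to get $\fint_{B_{s_0}(x_0)}|Du|\le C\bigl(\fint_{B_{2r}}|Du|^{1/2}\bigr)^2$; finally $|Du|^{1/2}\le|Dv|^{1/2}+|\mathbf S_j x+\vec p_j|^{1/2}$ on $B_{2r}$ gives $\bigl(\fint_{B_{2r}}|Du|^{1/2}\bigr)^2\le C\bigl(\fint_{B_{2r}}|Dv|^{1/2}\bigr)^2+CK$. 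As $\int_0^r\tfrac{\tilde\omega_{\mathbf A}(t)}{t}\,dt\le\int_0^{1/2}\tfrac{\tilde\omega_{\mathbf A}(t)}{t}\,dt<\infty$ depends only on $\omega_{\mathbf A}$, collecting constants and taking the supremum over $x_0\in\overline{B_r}$ yields the lemma.

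The crux is the bookkeeping in the iteration: one must make sure that the $O(K)$-sized source contributes to the error only through the Dini-summable mean oscillation $\omega_{\mathbf A}$ — so that it accumulates to $K\int_0^r\tfrac{\tilde\omega_{\mathbf A}(t)}{t}\,dt$ rather than to a bare $K$, which is precisely what the constant right-hand side of the equation for $V_s$ provides — while simultaneously keeping every average at the $L^{1/2}$ level dictated by the weak-type bound of Lemma~\ref{lem01}, calling on the reverse Hölder inequality for $Du$ only at the final step, where the accompanying enlargement of balls costs nothing.
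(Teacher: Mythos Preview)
Your decomposition and the structural observation are the same as the paper's: you freeze coefficients on $B_s(x_0)$, split $v=V_s+W_s$, and note that $\dv(\bar{\mathbf A}\,DV_s)$ is constant so each component of $DV_s$ is $\bar{\mathbf A}$-harmonic. The paper does exactly this (with $v_1=W_s$, $v_2=V_s$) and then refers to \cite[Theorem~1.5]{DK17} for the iteration. The difference is in the bookkeeping, and there your argument has a genuine gap.

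You bound $\bigl(\fint_{B_s(x_0)}|DW_s|^{1/2}\bigr)^2$ by $C\fint_{B_s(x_0)}|(\bar{\mathbf A}-\mathbf A)Du|$, substitute $Du=Dv+\mathbf S_j x+\vec p_j$, and then claim that the $Dv$-part contributes ``a small multiple of the excess of $Dv$ at scale $s$'' plus $C|(Dv)_{B_s(x_0)}|\,\omega_{\mathbf A}(s)$. Writing $Dv=(Dv-(Dv)_{B_s})+(Dv)_{B_s}$, the second piece gives the stated average term, but the first piece yields
\[
\fint_{B_s(x_0)}\bigl|(\bar{\mathbf A}-\mathbf A)\bigl(Dv-(Dv)_{B_s}\bigr)\bigr|
\;\le\;\eta(s)\,\fint_{B_s(x_0)}\bigl|Dv-(Dv)_{B_s}\bigr|,
\]
where $\eta$ is the (pointwise) modulus of continuity of $\mathbf A$. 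The right-hand side is the $L^1$-excess of $Dv$, not $\Phi(s)=\bigl(\fint|Dv-\vec q|^{1/2}\bigr)^2$; by Jensen the $L^1$-excess \emph{dominates} $\Phi(s)$, so you cannot absorb $C\kappa^{-2d}\eta(s)\cdot[L^1\text{-excess}]$ into $\tfrac12\Phi(s)$. The same mismatch resurfaces in your telescoping bound $|(Dv)_{B_s}|\le|(Dv)_{B_{s_0}}|+C\sum_j\Phi(\kappa^j s_0)$: differences of averages are controlled by the $L^1$-excess, not by $\Phi$. Since the weak-$(1,1)$ estimate of Lemma~\ref{lem01} only delivers $L^{1/2}$ control of $DW_s$, you cannot simply switch the iteration to the $L^1$-excess either.

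The paper avoids this by placing $\norm{Dv}_{L^\infty(B_t(x_0))}$ rather than $|(Dv)_{B_s}|+[\text{excess}]$ on the right of the $W_s$-estimate:
\[
\Bigl(\fint_{B_t(x_0)}|Dv_1|^{1/2}\Bigr)^{2}\le C\,\omega_{\mathbf A}(t)\bigl(r|\mathbf S_j|+|\vec p_j|\bigr)+C\,\omega_{\mathbf A}(t)\,\norm{Dv}_{L^\infty(B_t(x_0))}.
\]
This keeps the $\Phi$-iteration clean, and the $\norm{Dv}_{L^\infty}$ term is handled by the absorption scheme of \cite[Theorem~1.5]{DK17}. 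If you route your argument through $\norm{Dv}_{L^\infty}$ in the same way, the rest of your outline (including the final reverse-H\"older detour, which then becomes unnecessary) collapses to the paper's proof.
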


\begin{proof}
Since $\dv(\mathbf{A} Du)=0$ in $B_1$, it follows that
\[
\dv(\mathbf A Dv) = -\dv (\mathbf A(\mathbf{S}_j x+\vec p_j))  \quad\text{in}\quad B_{2r},
\]
for any $0<r \le \frac12$.
Let $x_0 \in B_{3r/2}$ and $0<t \le r/4$, and define $\bar{\mathbf A}:=(\mathbf A)_{B_t(x_0)}$.

We decompose $v$ as $v=v_1+v_2$, where $v_1 \in W^{1,p}_0(B_t(x_0))$ (for some $p>1$) is the weak solution to the problem
\[
\dv(\bar{\mathbf A} Dv_1) = -\dv ((\mathbf{A} -\bar{\mathbf A})(\mathbf{S}_jx+\vec p_j)+(\mathbf{A}- \bar{\mathbf A})Dv)\;\text{ in }\;B_t(x_0),
\]
with the boundary condition $v_1=0$ on $\partial B_t(x_0)$

By Lemma \ref{lem01}, we obtain (cf. \eqref{eq1622thu})
\[
\left(\fint_{B_t(x_0)} \abs{Dv_1}^{\frac12} \right)^{2}
\le C \left(\fint_{B_t(x_0)}\abs{\mathbf A-\bar{\mathbf A}}\right) \left(r\abs{\mathbf{S}_j}+\abs{\vec p_j}\right)
+C \left(\fint_{B_t(x_0)}\abs{\mathbf A-\bar{\mathbf A}}\right)\norm{Dv}_{L^\infty(B_t(x_0))},
\]
and thus, we have
\[
\left(\fint_{B_t(x_0)} \abs{Dv_1}^{\frac12} \right)^{2} \le C \omega_{\mathbf A}(t)\left(r\abs{\mathbf{S}_j}+\abs{\vec p_j}\right)+ C  \omega_{\mathbf A}(t)\norm{Dv}_{L^\infty(B_t(x_0))}.
\]

On the other hand, observe that $v_2=v-v_1$ satisfies
\[
L_0 v_2:=\dv(\bar{\mathbf A}Dv_2) = -\dv (\bar{\mathbf A}(\mathbf{S}_jx+\vec p_j)) = \text{constant} \quad\text{in }\,  B_t(x_0).
\]
Since $L_0$ is a constant-coefficients operator, it follows that $L_0(Dv_2)=0$.
The remainder of the proof then proceeds identically to that of \cite[Theorem 1.5]{DK17}.
\end{proof}

By Lemma \ref{lem1702sat}, and using \eqref{eq1852mon}, \eqref{eq0203tue}, and \eqref{eq4.38}, we have for $j=1,2,\ldots$:
\begin{align}
			\nonumber
\norm{Du-\mathbf{S}_j x-\vec p_j}_{L^\infty(B_{\frac12 \kappa^j r_0})} &\le C \kappa^j r_0 \varphi(\kappa^j r_0) +C \left(\kappa^j r_0\abs{\mathbf{S}_j}+\abs{\vec p_j}\right) \int_0^{\kappa^j r_0} \frac{\tilde\omega_{\mathbf A}(t)}{t}\,dt \\
			\nonumber
&\le C\kappa^{(1+\beta)j} \fint_{B_{r_0}} \abs{Du} + C\kappa^j r_0 M_j(r_0) \tilde\omega_{\mathbf A}(\kappa^j r_0)\\
				\label{eq0934wed0}
&\qquad+ C \left(\kappa^j r_0\abs{\mathbf{S}_j}+\abs{\vec p_j}\right) \int_0^{\kappa^j r_0} \frac{\tilde\omega_{\mathbf A}(t)}{t}\,dt,
\end{align}
where $C=C(d, \lambda, \Lambda, \omega_{\mathbf A}, \beta)$.
Since $Du(0)=0$, we infer from \eqref{eq0934wed0} that
\begin{multline*}
\abs{\vec p_j} \le C\kappa^{(1+\beta)j} \fint_{B_{r_0}} \abs{Du} + C\kappa^j r_0 M_j(r_0) \tilde\omega_{\mathbf A}(\kappa^j r_0)\\
+C \abs{\mathbf{S}_j}\kappa^j r_0 \int_0^{\kappa^j r_0} \frac{\tilde \omega_{\mathbf A}(t)}{t}\,dt
+C \abs{\vec p_j} \int_0^{\kappa^j r_0} \frac{\tilde\omega_{\mathbf A}(t)}{t}\,dt.
\end{multline*}
Let us fix $r_1>0$ such that
\begin{equation}			\label{eq0903tue}
C \int_0^{r_1} \frac{\tilde \omega_{\mathbf A}(t)}{t}\,dt\le \frac12.
\end{equation}
Note that that $r_1$ depends only on $d$, $\lambda$, $\Lambda$, $\omega_{\mathbf A}$, and $\beta$.
We have not yet chosen $r_0 \in (0,\frac12]$.
We will require $r_0 \le r_1$, which implies
\[
\abs{\vec p_j} \le  C\kappa^{(1+\beta)j} \fint_{B_{r_0}} \abs{Du} + C\kappa^j r_0 M_j(r_0) \tilde\omega_{\mathbf A}(\kappa^j r_0)
+C\abs{\mathbf{S}_j}\kappa^j r_0\int_0^{\kappa^j r_0} \frac{\tilde \omega_{\mathbf A}(t)}{t}\,dt.
\]
This estimate, combined with \eqref{eq0900tue} and \eqref{rmk1147_2}, yields
\begin{equation}			\label{eq7.51}
\abs{\vec p_j} \le C \kappa^{j} r_0 \left\{\kappa^{\beta j}+\int_0^{\kappa^j r_0} \frac{\tilde \omega_{\mathbf A}(t)}{t}\,dt\right\}\frac{1}{r_0} \fint_{B_{r_0}} \abs{Du} +C\kappa^j r_0 M_j(r_0) \int_0^{\kappa^j r_0} \frac{\tilde \omega_{\mathbf A}(t)}{t}\,dt.
\end{equation}

\subsection*{Convergence of $\mathbf{S}_j$}
By \eqref{eq0934wed0}, \eqref{eq0900tue}, \eqref{eq7.51}, and \eqref{rmk1147_2}, we have
\begin{multline}				\label{eq1920thu}
\bignorm{Du-\mathbf{S}_jx-\vec p_j}_{L^\infty(B_{\frac12 \kappa^j r_0})}
\le C \kappa^j r_0 \left\{\kappa^{\beta j}+ \int_0^{\kappa^j r_0} \frac{\tilde \omega_{\mathbf A}(t)}{t}\,dt \right\}\frac{1}{r_0} \fint_{B_{r_0}} \abs{Du}\\
+C\kappa^j r_0 M_j(r_0)  \int_0^{\kappa^j r_0} \frac{\tilde \omega_{\mathbf A}(t)}{t}\,dt.
\end{multline}
Then, from \eqref{eq1920thu}, \eqref{eq0900tue}, \eqref{eq7.51}, and \eqref{rmk1147_2}, we infer that
\begin{equation}				\label{eq1555sun}
\frac {1}{\kappa^{j} r_0}\norm{Du}_{L^\infty(B_{\frac12 \kappa^j r_0})}
\le \frac C {r_0} \fint_{B_{r_0}} \abs{Du} + C M_j(r_0) \int_0^{r_0} \frac{\tilde \omega_{\mathbf A}(t)}{t}\,dt,
\end{equation}
where $C=C(d, \lambda, \Lambda, \omega_{\mathbf A}, \beta)$.

\begin{lemma}				\label{lem1548sun}
There exists a constant $r_0=r_0(d, \lambda, \Lambda, \omega_{\mathbf A}, \beta) \in (0,\frac12)$ such that
\begin{equation}			\label{eq1951thu}
\sup_{j \ge 1} M_j(r_0) =\sup_{i \ge 0} \frac{1}{\kappa^i r_0} \,\norm{Du}_{L^\infty(B_{\kappa^i r_0})} \le \frac{C}{r_0} \fint_{B_{2r_0}} \abs{Du},
\end{equation}
where $C=C(d, \lambda, \Lambda, \omega_{\mathbf A}, \beta)$.
\end{lemma}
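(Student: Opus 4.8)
The plan is to establish the uniform bound by induction on $j$, combining the recursive estimate \eqref{eq1555sun} with an absorption argument that is valid once $r_0$ is small. Write
\[
A:=\frac1{r_0}\fint_{B_{2r_0}}\abs{Du},\qquad \eta(r):=\int_0^{r}\frac{\tilde\omega_{\mathbf A}(t)}{t}\,dt,
\]
and note that $\eta(r)\to 0$ as $r\to 0^+$ since $\tilde\omega_{\mathbf A}$ satisfies the Dini condition, and that $\frac1{r_0}\fint_{B_{r_0}}\abs{Du}\le 2^d A$. I claim there exist $C_0=C_0(d,\lambda,\Lambda,\omega_{\mathbf A},\beta)$ and $r_0=r_0(d,\lambda,\Lambda,\omega_{\mathbf A},\beta)\in(0,\tfrac12)$, with $r_0\le r_1$, such that $M_j(r_0)\le C_0 A$ for every $j\ge 1$. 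Since $M_j(r_0)$ is nondecreasing in $j$ and $\sup_{j\ge1}M_j(r_0)=\sup_{i\ge 0}\frac1{\kappa^i r_0}\norm{Du}_{L^\infty(B_{\kappa^i r_0})}$, this claim is precisely \eqref{eq1951thu}.

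For the base case $j=1$ we have $M_1(r_0)=\frac1{r_0}\norm{Du}_{L^\infty(B_{r_0})}$, and the interior gradient estimate for solutions of $\dv(\mathbf A Du)=0$ with $\mathbf A\in\mathrm{DMO}$ — which, after rescaling and using that $\omega_{\mathbf A}$ is nondecreasing, holds with a constant $C_1=C_1(d,\lambda,\Lambda,\omega_{\mathbf A})$ independent of the scale $r_0\in(0,\tfrac12]$; see \cite[Theorem~1.5]{DK17} — gives $\norm{Du}_{L^\infty(B_{r_0})}\le C_1\fint_{B_{2r_0}}\abs{Du}$, hence $M_1(r_0)\le C_1 A$.

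For the inductive step, assume $M_j(r_0)\le C_0 A$ for some $j\ge 1$. Since $M_{j+1}(r_0)=\max\bigl\{M_j(r_0),\ \tfrac1{\kappa^j r_0}\norm{Du}_{L^\infty(B_{\kappa^j r_0})}\bigr\}$, it remains to control the last term. Because $\kappa<\tfrac12$ we have $B_{\kappa^j r_0}\subset B_{\frac12\kappa^{j-1}r_0}$, so
\[
\frac1{\kappa^j r_0}\norm{Du}_{L^\infty(B_{\kappa^j r_0})}\le \frac1\kappa\cdot\frac1{\kappa^{j-1}r_0}\norm{Du}_{L^\infty(B_{\frac12\kappa^{j-1}r_0})}.
\]
When $j\ge 2$, apply \eqref{eq1555sun} with $j$ replaced by $j-1$ and use $M_{j-1}(r_0)\le M_j(r_0)\le C_0 A$ to bound the right-hand side by $\frac1\kappa\bigl(C2^d+CC_0\eta(r_0)\bigr)A$; when $j=1$, use $B_{\kappa r_0}\subset B_{r_0}$ and the base case to bound it by $\frac{C_1}{\kappa}A$. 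Now fix $r_0\le\min\{\tfrac12,r_1\}$ small enough that $\frac C\kappa\eta(r_0)\le\tfrac12$, and then set $C_0:=\max\bigl\{\tfrac{C_1}{\kappa},\ \tfrac{2^{d+1}C}{\kappa}\bigr\}$; with these choices each of the bounds above is $\le C_0 A$, so $M_{j+1}(r_0)\le C_0 A$, closing the induction.

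Apart from routine bookkeeping, the one point needing care is the passage from the scale $\frac12\kappa^j r_0$, on which \eqref{eq1555sun} controls $\norm{Du}_{L^\infty}$, to the scale $\kappa^j r_0$ appearing in the definition \eqref{eq1244sat} of $M_j$; this is exactly where $\kappa<\tfrac12$ enters, at the cost of a harmless factor $\kappa^{-1}$ absorbed into the constants. One should also verify that the interior $C^1$ estimate used in the base case has a constant uniform over small radii, which is what makes the target bound $\tfrac{C}{r_0}\fint_{B_{2r_0}}\abs{Du}$ meaningful.
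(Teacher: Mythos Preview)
Your proof is correct and follows essentially the same approach as the paper: both arguments use the base case via the interior $C^1$ estimate of \cite[Theorem~1.5]{DK17}, pass from the scale $\tfrac12\kappa^j r_0$ in \eqref{eq1555sun} to the scale $\kappa^{j+1}r_0$ via the inclusion $B_{\kappa^{j+1}r_0}\subset B_{\frac12\kappa^j r_0}$ (using $\kappa<\tfrac12$), and then close an induction by choosing $r_0$ small enough that the Dini integral can be absorbed. The only cosmetic difference is that the paper tracks the partial geometric sums $\sum_{i=0}^k 2^{-i}$ explicitly, whereas you fix the target constant $C_0$ in advance and absorb in one step.
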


\begin{proof}
We define
\[
c_i= \frac{1}{\kappa^i r_0} \norm{Du}_{L^\infty(B_{\kappa^i r_0})},\quad i=0,1,2,\ldots.
\]
Then, by \eqref{eq1244sat}, it is clear that
\begin{equation}			\label{eq0906tue}
M_1(r_0)=c_0\quad\text{and}\quad
M_{j+1}(r_0)=\max(M_j(r_0), c_j),\quad j=1,2,\ldots.
\end{equation}
Recall that $\kappa = \kappa(d, \lambda, \Lambda, \beta) \in (0, \tfrac{1}{2})$ has already been chosen. Then, from \eqref{eq1555sun}, we deduce that there exists a constant $C=C(d, \lambda, \Lambda, \omega_{\mathbf A}, \beta)>0$ such that
\begin{equation}			\label{eq0848tue}
c_{j+1} \le C \left\{\frac{1}{r_0} \fint_{B_{r_0}}\abs{Du}+ M_j(r_0) \int_0^{r_0} \frac{\tilde \omega_{\mathbf A}(t)}{t}\,dt \right\},\quad j=1,2,\ldots.
\end{equation}
By \cite[Theorem 1.5]{DK17}, we obtain
\begin{equation}			\label{eq0849tue}
c_1 =\frac{1}{\kappa r_0} \norm{Du}_{L^\infty(B_{\kappa r_0})} \le \frac{1}{\kappa r_0} \norm{Du}_{L^\infty(B_{r_0})} =\frac{1}{\kappa} c_0  \le \frac{C}{r_0} \fint_{B_{2r_0}} \abs{Du},
\end{equation}
where $C=C(d, \lambda, \Lambda, \omega_{\mathbf A},\beta)$.

By combining \eqref{eq0848tue} and \eqref{eq0849tue}, we establish the existence of a constant $\gamma=\gamma(d, \lambda, \Lambda,\omega_{\mathbf A}, \beta)>0$ such that the following inequalities hold:
\begin{align*}
c_0,\, c_1 &\le \frac{\gamma}{r_0} \fint_{B_{2r_0}}\abs{Du}\quad\text{and}\\
c_{j+1} &\le \gamma \left\{\frac{1}{r_0} \fint_{B_{2r_0}}\abs{Du}+ M_j(r_0) \int_0^{r_0} \frac{\tilde \omega_{\mathbf A}(t)}{t}\,dt \right\},\;\; j=1,2,\ldots.
\end{align*}

Now, we fix a number $r_0 \in (0,\frac12]$ such that
\[
\gamma \int_0^{r_0} \frac{\tilde \omega_{\mathbf A}(t)}{t}\,dt \le \frac12
\]
and also ensure that $r_0 \le r_1$, is as defined in \eqref{eq0903tue}.
With this choice, we obtain
\begin{equation}			\label{eq2114mon}
c_0, c_1 \le \frac{\gamma}{r_0} \fint_{B_{2r_0}}\abs{Du} \quad \text{and}\quad c_{j+1} \le \frac{\gamma}{r_0} \fint_{B_{2r_0}}\abs{Du} +\frac12 M_j(r_0),\;\; j=1,2,\ldots.
\end{equation}

By induction, it follows form \eqref{eq0906tue} and \eqref{eq2114mon} that
\[
c_{2k},\,c_{2k+1},\,M_{2k+1}(r_0),\,M_{2k+2}(r_0) \le \frac{\gamma}{r_0} \fint_{B_{2r_0}}\abs{Du}\cdot \sum_{i=0}^{k} \frac{1}{2^i},\quad k=0,1,2,\ldots.
\]
This completes the proof of the lemma.
\end{proof}

\smallskip

Now, Lemma \ref{lem1548sun} and \eqref{eq0946tue} imply that the sequence $\{\mathbf{S}_j\}$ is a Cauchy sequence in $\mathbb{S}^d$, and thus $\mathbf{S}_j \to \mathbf{S}$ for some $\mathbf{S} \in \mathbb{S}^d$.
Moreover, by taking the limit as $k\to \infty$ in \eqref{eq0946tue} and \eqref{eq0947tue} (while recalling \eqref{eq1951thu} and \eqref{eq0215tue}), respectively, and then setting $l=j$, we obtain the following estimates:
\begin{equation}	\label{eq1806sun}
\begin{aligned}
\abs{\mathbf{S}_j-\mathbf{S}} &\le C \left\{ \kappa^{\beta j}+ \int_0^{\kappa^j r_0} \frac{\tilde \omega_{\mathbf A}(t)}{t}\,dt\right\}\frac{1}{r_0} \fint_{B_{2r_0}} \abs{Du},\\
\abs{\vec p_j}  &\le C \kappa^j \left\{\kappa^{\beta j} +\int_0^{\kappa^j r_0} \frac{\tilde \omega_{\mathbf A}(t)}{t}\,dt \right\}\fint_{B_{2r_0}} \abs{Du}.
\end{aligned}
\end{equation}

By the triangle inequality, \eqref{eq1920thu}, \eqref{eq1806sun}, and \eqref{eq1951thu}, we obtain
\begin{align}			\nonumber
\norm{Du-\mathbf{S}x}_{L^\infty(B_{\frac12 \kappa^j r_0})}& \le \norm{Du- \mathbf{S}_j x -\vec  p_j}_{L^\infty(B_{\frac12 \kappa^j r_0})} + \frac{\kappa^j r_0}{2} \abs{\mathbf{S}_j - \mathbf{S}} +  \abs{\vec p_j}\\
					\label{eq2221sun}
&\le C \kappa^j r_0\left\{\kappa^{\beta j}+\int_0^{\kappa^j r_0} \frac{\tilde\omega_{\mathbf A}(t)}{t}\,dt\right\} \frac{1}{r_0} \fint_{B_{2r_0}} \abs{Du}.
\end{align}

\subsection*{Conclusion}
It follows from \eqref{eq2221sun} that
\begin{equation}		\label{eq1036wed}
\frac{1}{r} \norm{Du-\mathbf{S}x}_{L^\infty(B_r)}  \le \varrho_{\mathbf A}(r) \left( \frac{1}{r_0} \fint_{B_{2r_0}} \abs{Du} \right),
\end{equation}
where
\begin{equation}			\label{eq2143sat}
\varrho_{\mathbf A}(r)=C\left\{\left(\frac{2r}{\kappa r_0}\right)^\beta+\int_0^{2r/\kappa} \frac{\tilde\omega_{\mathbf A}(t)}{t}\,dt\right\}.
\end{equation}
Note that $\varrho_{\mathbf A}$ is a modulus of continuity  determined by $d$, $\lambda$, $\Lambda$, $\omega_{\mathbf A}$, and $\beta \in (0,1)$.

In particular, we conclude from \eqref{eq1036wed} that $Du$ is differentiable at $0$.
Moreover, it follows from \eqref{eq0900tue} and \eqref{eq1951thu} that (noting that $D^2u(0)=\mathbf{S}=\lim_{j\to \infty} \mathbf{S}_j$) we have
\[
\abs{D^2 u (0)} \le \frac{C}{r_0} \fint_{B_{2r_0}} \abs{Du},
\]
where $C=C(d, \lambda, \Lambda, \omega_{\mathbf A},\beta)$.

From \eqref{eq2143sat} and \eqref{eq1245sat}, we observe that when $\mathbf A \in C^{\alpha}$ for some $\alpha \in (0,1)$, we have $\varrho_{\mathbf A}(r) \lesssim r^\alpha$ by choosing $\beta \in (\alpha,1)$.
This completes the proof in the special case.
 \qed

\section{Proof of Theorem \ref{thm01}: General case}		\label{sec3}
We now proceed with the proof of Theorem~\ref{thm01} in the general setting.

\smallskip
Define $\omega_{\rm coef}(\cdot)$ by
\begin{equation} \label{omega_coef_div}
\omega_{\rm coef}(r) := \omega_{\mathbf A}(r) + r^{1-d}\sup_{x \in \Omega} \int_{\Omega \cap B_r(x)}\abs{\vec b} +r^{1-d} \sup_{x \in \Omega} \int_{\Omega \cap B_r(x)}\abs{c}.
\end{equation}
We observe that $\omega_{\rm coef}(\cdot)$ satisfies the Dini condition:
\[
\int_0^1 \frac{\omega_{\rm coef}(t)}{t} \,dt <\infty.
\]
Although one might expect a factor of $r^{2-d}$ in the last term of the definition \eqref{omega_coef_div}, we retain the current form for consistency with the notation $\omega_{\rm coef}(\cdot,x_0)$ introduced in the proof of  Lemma~\ref{lem3.11sat}.

Let $u$ be a solution to \eqref{eq_main-d}, i.e., $u$ satisfies
\[
\dv (\mathbf{A} Du) + \boldsymbol{b} \cdot Du + cu = f \quad \text{in }\; \Omega.
\]
From \cite[Theorem 1.3]{DEK18} (and also \cite[Proposition 2.6]{DEK18}), it follows that $u \in C^1_{\rm loc}(\Omega)$.
Assume that $Du(x^o) = 0$ for some $x^o \in \Omega$.
Without loss of generality, we can set $x^o = 0$ and assume $B_2(0) \subset \Omega$.

For $0 < r \le 1/2$, we define $\bar{\mathbf A}$, $\bar c$, $\bar u$, and $\bar f$ as the averages of $\mathbf A$, $c$, $u$, and $f$ over the ball $B_r$, respectively.

We decompose $u$ as  $u=v+w$, where $w \in W^{1,p}_0(B_r)$ (for some $p>1$) is the solution to the problem:
\[
\dv( \bar{\mathbf A} Dw)= -\dv ((\mathbf{A}-\bar{\mathbf A})Du)-\vec b \cdot Du-(cu -\bar c \bar u)+f-\bar f\;\mbox{ in }\; B_r,
\]
with $w=0$ on $\partial B_r$.

By Lemma~\ref{lem01}, we obtain the following estimate via rescaling (cf. \eqref{eq1622thu}):
\begin{multline}			\label{eq1020fri}
\left(\fint_{B_r} \abs{Dw}^{\frac12}\right)^{2} \le C\omega_{\mathbf A}(r) \norm{Du}_{L^\infty(B_r)}+Cr\left(\fint_{B_r} \abs{\vec b}\right) \norm{Du}_{L^\infty(B_r)}\\
 +Cr\left\{\omega_{c}(r)\norm{u}_{L^\infty(B_r)}+r\left(\fint_{B_r} \abs{c} \right)\norm{Du}_{L^\infty(B_r)}\right\}+ Cr \omega_f(r),
\end{multline}
where we utilized the identity
\[
cu-\bar c \bar u=(c-\bar c)u+\bar c(u-\bar u)
\]
and applied the Poincar\'e inequality to derive:
\begin{align}
				\nonumber
\fint_{B_r} \abs{cu-\bar c \bar u} &\le \norm{u}_{L^\infty(B_r)} \fint_{B_r} \abs{c-\bar c}+ C\abs{\bar c} r \fint_{B_r} \abs{Du}\\
				\label{eq1042fri}
& \le \omega_{c}(r)\norm{u}_{L^\infty(B_r)}+ Cr \left(\fint_{B_r} \abs{c}\right)\norm{Du}_{L^\infty(B_r)}.
\end{align}

Since $v=u-w$ satisfies
\[
\dv(\bar{\mathbf{A}} Dv) = \bar{f} - \bar{c} \bar{u} = \text{constant} \quad \text{in } B_r,
\]
the function $\tilde v:=D_kv - l$, where $k=1,\ldots, d$ and $l(x)$ is any affine function, satisfies
\[
\dv(\bar{\mathbf{A}} D \tilde v)=0\quad \text{in } B_r.
\]
Therefore, the same reasoning that led to estimate \eqref{eq1228thu} also applies here, yielding the identical estimate.

Let $\varphi(r)$ be as defined in \eqref{eq1100sat}, and let $\beta \in (0,1)$ be  an arbitrary but fixed constant.

By using \eqref{eq1020fri} in place of \eqref{eq1622thu} and carrying out computations analogous to those in \eqref{eq0224sat}, we arrive at the following estimate, which is analogous to \eqref{eq1110sat}:
\begin{equation}		\label{eq2154thu}
\varphi(\kappa r) \le \kappa^\beta \varphi(r) + C \omega_{\rm coef}(r)\, \frac{1}{r} \norm{Du}_{L^\infty(B_r)} +C \omega_f(r)+ C \omega_c(r) \norm{u}_{L^\infty(B_r)}
\end{equation}
where $C=C(d, \lambda, \Lambda, \beta)$.
Recall that for a given $\beta$, the constant $\kappa=\kappa(d,\lambda, \Lambda, \beta)$ is chosen to satisfy $\kappa \in (0, 1/2)$.

Let $M_j(r_0)$ be defined as in \eqref{eq1244sat}, where $r_0\in (0, \frac{1}{2}]$ is a parameter to be chosen later.
Define $\tilde \omega_{\cdots}(\cdot)$ as in \eqref{eq1245sat}, replacing $\omega_{\mathbf A}(\cdot)$ with $\omega_{\cdots}(\cdot)$.

Then, by replicating the argument from Section~\ref{sec2}, we obtain the following estimate, analogous to \eqref{eq4.38}:
\begin{equation}			\label{eq0616thu}
\varphi(\kappa^j r_0) \le  \frac{\kappa^{\beta j}}{r_0} \fint_{B_{r_0}} \abs{Du}+ CM_j(r_0) \tilde \omega_{\rm coef}(\kappa^j r_0) +C \tilde \omega_f(\kappa^j r_0) + C \tilde \omega_c(\kappa^j r_0)\norm{u}_{L^\infty(B_{r_0})}.
\end{equation}

For $j=0,1,2,\ldots$, let $\mathbf{S}_j  \in \mathbb{S}^d$ and $\vec p_j \in \bR^d$ be chosen as in \eqref{eq0203tue}.
Then, by using \eqref{eq0616thu} in place of \eqref{eq4.38} and replicating the argument from Section~\ref{sec2}, we arrive at the following conclusion:
\begin{equation}			\label{eq1717thu}
\lim_{j\to \infty} \vec p_j=0.
\end{equation}

Additionally, by repeating the same computations that lead to \eqref{eq0946tue} and \eqref{eq0947tue}, we obtain the following estimate for $k>l \ge 0$:
\begin{multline}			\label{eq1718thu}
\abs{\mathbf{S}_k -\mathbf{S}_l} +\frac{\abs{\vec p_k - \vec p_l}}{\kappa^l r_0}
\le \frac{C \kappa^{\beta l}}{r_0} \fint_{B_{r_0}} \abs{Du}+ CM_k(r_0) \int_0^{\kappa^l r_0} \frac{\tilde \omega_{\rm coef}(t)}{t}\,dt\\
+C \int_0^{\kappa^l r_0} \frac{\tilde \omega_f(t)}{t}\,dt
+C\norm{u}_{L^\infty(B_{r_0})} \int_0^{\kappa^l r_0} \frac{\tilde \omega_c(t)}{t}\,dt,
\end{multline}
where $C=C(d, \lambda, \Lambda,\beta)$.
In particular, similar to \eqref{eq0900tue}, we obtain the following bound for $\mathbf{S}_j$:
\begin{multline}			\label{eq1628fri}
\abs{\mathbf S_j} \le  \frac{C}{r_0} \fint_{B_{r_0}} \abs{Du}+ CM_j(r_0) \int_0^{r_0} \frac{\tilde \omega_{\rm coef}(t)}{t}\,dt\\
+C \int_0^{r_0} \frac{\tilde \omega_f(t)}{t}\,dt + C\norm{u}_{L^\infty(B_{r_0})} \int_0^{r_0} \frac{\tilde \omega_c(t)}{t}\,dt,
\end{multline}
where $C=C(d, \lambda, \Lambda, \beta)$.

\medskip
The following lemma corresponds to Lemma \ref{lem1702sat}.

\begin{lemma}			\label{lem3.11sat}
Let $v$ be defined by
\begin{equation}			\label{eq1359fri}
v(x):=u(x)-\tfrac{1}{2} \mathbf{S}_j x\cdot x-\vec p_j\cdot x.
\end{equation}
Then, for $0<r \le \frac12$, we have
\begin{multline*}
\sup_{B_{r}}\, \abs{Dv} \le C \left\{ \left( \fint_{B_{2r}} \abs{Dv}^{\frac12} \right)^{2} + r\int_0^r \frac{\tilde \omega_f(t)}{t}\,dt +r\norm{u}_{L^\infty(B_r)}\int_0^r \frac{\tilde \omega_c(t)}{t}\,dt\right.\\
\left.+(r\abs{\mathbf S_j}+\abs{\vec p_j}) \int_0^r \frac{\tilde \omega_{\rm coef}(t)}{t}\,dt \right\},
\end{multline*}
where $C=C(d, \lambda, \Lambda, \omega_{\rm coef},\beta)$.
\end{lemma}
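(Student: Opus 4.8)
The plan is to repeat the argument of Lemma~\ref{lem1702sat} verbatim for the constant-coefficient part of the equation, while carrying the lower-order terms $\vec b\cdot Du$, $cu$ and the inhomogeneity $f$ through the freezing-coefficient estimate. First, with $v$ as in \eqref{eq1359fri} one has $Dv=Du-\mathbf S_j x-\vec p_j$, so since $u$ solves \eqref{eq_main-d} in $B_2$,
\[
\dv(\mathbf A Dv)=f-\vec b\cdot Dv-\vec b\cdot(\mathbf S_j x+\vec p_j)-cu-\dv\bigl(\mathbf A(\mathbf S_j x+\vec p_j)\bigr)\quad\text{in }B_{2r}.
\]
I would keep the term $cu$ as is rather than rewriting it via $u=v+\tfrac12\mathbf S_j x\cdot x+\vec p_j\cdot x$: it is $cu$, not ``$cv$'', that is controlled by the splitting used in \eqref{eq1042fri}, and introducing the quadratic correction only clutters the bookkeeping (it would manufacture spurious $r^2\abs{\mathbf S_j}$ contributions, whereas keeping $cu$ costs only $\norm{u}_{L^\infty}$ and, through $Du=Dv+\mathbf S_j x+\vec p_j$, the already-present $\abs{\mathbf S_j}$, $\abs{\vec p_j}$ terms).

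Next, fix $x_0\in B_{3r/2}$ and $0<t\le r/4$, so that $B_t(x_0)\subset B_{2r}$, and let $\bar{\mathbf A}$, $\bar c$, $\bar u$, $\bar f$ denote the averages of $\mathbf A$, $c$, $u$, $f$ over $B_t(x_0)$. Decompose $v=v_1+v_2$, where $v_1\in W^{1,p}_0(B_t(x_0))$ solves
\[
\dv(\bar{\mathbf A}Dv_1)=-\dv\bigl((\mathbf A-\bar{\mathbf A})Dv\bigr)-\dv\bigl((\mathbf A-\bar{\mathbf A})(\mathbf S_j x+\vec p_j)\bigr)-\vec b\cdot Dv-\vec b\cdot(\mathbf S_j x+\vec p_j)-(cu-\bar c\bar u)+(f-\bar f)
\]
in $B_t(x_0)$, with $v_1=0$ on $\partial B_t(x_0)$. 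Then $v_2=v-v_1$ satisfies $\dv(\bar{\mathbf A}Dv_2)=\bar f-\bar c\bar u-\dv(\bar{\mathbf A}(\mathbf S_j x+\vec p_j))$, which is a constant; hence the constant-coefficient operator $L_0:=\dv(\bar{\mathbf A}D\,\cdot\,)$ annihilates $Dv_2$.

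Applying Lemma~\ref{lem01} after rescaling to $B_t(x_0)$, bounding $\abs{\mathbf S_j x+\vec p_j}\le r\abs{\mathbf S_j}+\abs{\vec p_j}$ on $B_{2r}$, and treating $cu-\bar c\bar u=(c-\bar c)u+\bar c(u-\bar u)$ by the Poincar\'e inequality exactly as in \eqref{eq1042fri}, I would obtain
\begin{multline*}
\Bigl(\fint_{B_t(x_0)}\abs{Dv_1}^{1/2}\Bigr)^{2}\le C\,\omega_{\rm coef}(t,x_0)\bigl(\norm{Dv}_{L^\infty(B_t(x_0))}+r\abs{\mathbf S_j}+\abs{\vec p_j}\bigr)\\
+Ct\,\omega_c(t,x_0)\norm{u}_{L^\infty(B_t(x_0))}+Ct\,\omega_f(t,x_0),
\end{multline*}
where $\omega_{\rm coef}(t,x_0):=\fint_{B_t(x_0)}\abs{\mathbf A-(\mathbf A)_{B_t(x_0)}}+t\fint_{B_t(x_0)}\abs{\vec b}+t\fint_{B_t(x_0)}\abs{c}$ and $\omega_c(t,x_0)$, $\omega_f(t,x_0)$ are the centered mean oscillations of $c$, $f$ on $B_t(x_0)$; these are dominated by $\omega_{\rm coef}(t)$, $\omega_c(t)$, $\omega_f(t)$, and $\omega_{\rm coef}(t,x_0)$ is exactly the localization of \eqref{omega_coef_div} anticipated in the remark there. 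This is the computation behind \eqref{eq1020fri}. Since $L_0(Dv_2)=0$, interior estimates for $L_0$ give Campanato-type decay of the oscillation of $Dv_2$; combining this with the display above via the quasi-triangle inequality in $L^{1/2}$ and iterating yields, for the local oscillation of $Dv$ at $x_0$, a recursion $\psi(\kappa t)\le C\kappa\,\psi(t)+(\text{errors})$ whose error terms are precisely $\omega_{\rm coef}(t,x_0)(\cdots)+t\,\omega_c(t,x_0)\norm{u}_{L^\infty(B_r)}+t\,\omega_f(t,x_0)$. Summing the resulting Dini series with \eqref{rmk1147} (in the $\tilde\omega$ notation of \eqref{eq1245sat}) and taking the supremum over $x_0\in B_r$ gives the claimed bound. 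From this recursion onward the argument is identical to that of \cite[Theorem~1.5]{DK17} and to the end of the proof of Lemma~\ref{lem1702sat}; in particular the term $\omega_{\rm coef}(t,x_0)\norm{Dv}_{L^\infty(B_t(x_0))}$ is absorbed by the standard maximal-function argument over shrinking balls.

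The main obstacle, and the only genuine novelty relative to Lemma~\ref{lem1702sat}, is the $cu$ term. It cannot be replaced by a harmless ``$cv$'', and the splitting $cu-\bar c\bar u=(c-\bar c)u+\bar c(u-\bar u)$ forces two things: carrying $\norm{u}_{L^\infty}$, which is what produces the $\tilde\omega_c$ integral in the statement, and, via Poincar\'e on $u-\bar u$, reintroducing $Du=Dv+\mathbf S_j x+\vec p_j$ on the right-hand side — so one must verify that the resulting $\abs{\mathbf S_j}$, $\abs{\vec p_j}$, and $\norm{Dv}_{L^\infty}$ contributions are consistent with the definition of $\omega_{\rm coef}$ and are absorbed in the iteration (the extra power of $t$ they come with makes this automatic since $t\le\frac12$). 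Everything else is routine bookkeeping parallel to Section~\ref{sec2}.
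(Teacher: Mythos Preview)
Your proposal is correct and follows essentially the same approach as the paper: the same freezing-of-coefficients decomposition $v=v_1+v_2$ on balls $B_t(x_0)$, the same weak-$(1,1)$ estimate for $Dv_1$ via Lemma~\ref{lem01}, the same observation that $L_0(Dv_2)=0$, and the same deferral to the Campanato iteration of \cite{DK17,DEK18}. The only cosmetic difference is that the paper writes $cu=cv+c\bigl(\tfrac12\mathbf S_jx\cdot x+\vec p_j\cdot x\bigr)$ and handles the two pieces separately (recovering $\norm{u}_{L^\infty}$ from $\norm{v}_{L^\infty}$ in \eqref{eq1922sat}), whereas you keep $cu$ intact and reintroduce $\mathbf S_j$, $\vec p_j$ through Poincar\'e on $u-\bar u$ via $Du=Dv+\mathbf S_jx+\vec p_j$; both routes produce the same local quantity $\omega_{\rm coef}(t,x_0)=\omega_{\mathbf A}(t)+t\fint_{B_t(x_0)}\abs{\vec b}+t\fint_{B_t(x_0)}\abs{c}$ and the same final bound.
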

\begin{proof}
Note that $Dv$ is given by
\begin{equation}		\label{eq0846sun}
Dv(x)=Du(x)-\mathbf{S}_j x-\vec p_j.
\end{equation}
For $x_0 \in B_{3r/2}$ and $0<t\le r/4$, let $\bar{\mathbf A}$, $\bar c$, $\bar v$, and $\bar{f}$ denote the averages of $\mathbf A$, $c$, $v$, and $f$ over the ball $B_t(x_0)$, respectively.
Observe that $v$ satisfies
\begin{multline*}
\dv(\bar{\mathbf A}Dv)=f -\dv((\mathbf{A}-\bar{\mathbf A})Dv)-\vec b\cdot Dv-cv\\
-\dv(\mathbf A (\mathbf{S}_j  x + \vec p_j)) - \vec b\cdot (\mathbf S_j x + \vec p_j) -c (\tfrac{1}{2}\mathbf  S_j x \cdot x +\vec  p_j \cdot x)\quad\mbox{in }\, B_t(x_0).
\end{multline*}

We decompose $v$ as $v=v_1+v_2$, where $v_1 \in W^{1,p}_0(B_t(x_0))$ for some $p>1$,  and $v_1$ solves
\begin{align*}
\dv(\bar{\mathbf A}Dv_1) &=  f-\bar f -\dv((\mathbf{A}- \bar{\mathbf A})Dv)-\vec b\cdot Dv-(cv-\bar c \bar v)\\
&-\dv((\mathbf A-\bar{\mathbf A}) (\mathbf{S}_j  x + \vec p_j)) - \vec b\cdot (\mathbf S_j x + \vec p_j) -c (\tfrac{1}{2}\mathbf  S_j x \cdot x +\vec  p_j \cdot x)\quad\mbox{in }\, B_t(x_0)
\end{align*}
with boundary condition $v_1=0$ on $\partial B_t(x_0)$.

Then, applying Lemma~\ref{lem01} via rescaling, we obtain the following estimate:
\begin{align}
			\nonumber
\left(\fint_{B_t(x_0)} \abs{Dv_1}^{\frac12} \right)^{2} &\lesssim t \fint_{B_t(x_0)}\abs{f-\bar f}+ \norm{Dv}_{L^\infty(B_t(x_0))}\fint_{B_t(x_0)}\abs{\mathbf A-\bar{\mathbf A}}+t \norm{Dv}_{L^\infty(B_t(x_0))} \fint_{B_t(x_0)}\abs{\vec b} \\
			\nonumber
&+t \fint_{B_t(x_0)} \abs{cv-\bar c \bar v} +(r\abs{\mathbf S_j}+\abs{\vec p_j})\fint_{B_t(x_0)}\abs{\mathbf A-\bar{\mathbf A}}\\
				\label{eq1116mon}
&+ t(r\abs{\mathbf S_j}+\abs{\vec p_j})\fint_{B_t(x_0)}\abs{\vec b}+tr(r\abs{\mathbf S_j}+\abs{\vec p_j})\fint_{B_t(x_0)}\abs{c}.
\end{align}
Similar to estimate \eqref{eq1042fri}, we have:
\begin{align}
			\nonumber
&\fint_{B_t(x_0)} \abs{cv-\bar c \bar v} \le \norm{v}_{L^\infty(B_t(x_0))}\fint_{B_t(x_0)} \abs{c-\bar c} + Ct\norm{Dv}_{L^\infty(B_t(x_0))}\fint_{B_t(x_0)} \abs{c} \\
			\nonumber
&\qquad\le \left\{\norm{u}_{L^\infty(B_t(x_0))}+ 2r^2\abs{\mathbf S_j}+2r\abs{\vec p_j}\right\}\fint_{B_t(x_0)} \abs{c-\bar c}+Ct\norm{Dv}_{L^\infty(B_t(x_0))}\fint_{B_t(x_0)} \abs{c},\\
			\label{eq1922sat}
&\qquad \le \omega_c(t) \norm{u}_{L^\infty(B_t(x_0))}+ 4r(r\abs{\mathbf S_j}+\abs{\vec p_j})\fint_{B_t(x_0)} \abs{c} +Ct\norm{Dv}_{L^\infty(B_t(x_0))}\fint_{B_t(x_0)} \abs{c},
\end{align}
where we used \eqref{eq1359fri} in the second inequality.

Thus, by the definition \eqref{omega_coef_div}, and using \eqref{eq1116mon} and \eqref{eq1922sat}, we obtain
\begin{multline*}
\left(\fint_{B_t(x_0)} \abs{Dv_1}^{\frac12} \right)^{2} \le Ct\omega_f(t)+C \omega_{\rm coef}(t)\norm{Dv}_{L^\infty(B_t(x_0))} + Ct\omega_c(t)\norm{u}_{L^\infty(B_r)}\\
+C(r\abs{\mathbf S_j}+\abs{\vec p_j}) \omega_{\rm coef}(t,x_0),
\end{multline*}
where $C=C(d, \lambda, \Lambda)$, and we define (noting that $r\le 1)$.
\[
\omega_{\rm coef}(t,x_0):=\omega_{\mathbf A}(t)+t\fint_{B_t(x_0)}\abs{\vec b} + t\fint_{B_t(x_0)}\abs{c}.
\]

On the other hand, note that $v_2=v-v_1$ satisfies
\[
L_0v_2:=\dv(\bar{\mathbf A}Dv_2) = \mbox{constant} \quad\text{ in } B_t(x_0).
\]
Therefore, $L_0(D v_2)=0$ in $B_t(x_0)$.
The remainder of the proof proceeds by arguments analogous to those in \cite[Lemma 2.2]{KL21} and \cite[Theorem 1.3]{DEK18}.
\end{proof}

By applying Lemma~\ref{lem3.11sat} together with \eqref{eq0616thu}, \eqref{eq0846sun}, and \eqref{rmk1147_2}, we derive the following estimate (cf. \eqref{eq0934wed0}):
\begin{align}
			\nonumber
&\norm{Du-\mathbf{S}_j x-\vec p_j}_{L^\infty(B_{\frac12 \kappa^j r_0})}\le C\kappa^{(1+\beta)j} \fint_{B_{r_0}} \abs{Du}+C\kappa^j r_0 M_j(r_0) \tilde\omega_{\rm coef}(\kappa^j r_0)\\
			\nonumber
&\quad\qquad
+C \left(\abs{\mathbf S_j}\kappa^j r_0+\abs{\vec p_j}\right) \int_0^{\kappa^j r_0} \frac{\tilde \omega_{\rm coef}(t)}{t}\,dt + C \kappa^j r_0 \int_0^{\kappa^j r_0} \frac{\tilde \omega_f(t)}{t}\,dt\\
				\label{eq1218fri}
&\quad\qquad+ C \kappa^j r_0\norm{u}_{L^\infty(B_{r_0})} \int_0^{\kappa^j r_0} \frac{\tilde \omega_c(t)}{t}\,dt.
\end{align}
Since $Du(0)=0$, we infer from \eqref{eq1218fri} that
\begin{align}
			\nonumber
\abs{\vec p_j} &\le C\kappa^{(1+\beta)j} \fint_{B_{r_0}} \abs{Du}+C\kappa^j r_0 M_j(r_0) \tilde\omega_{\rm coef}(\kappa^j r_0)\\ 
			\nonumber
&\quad +C \kappa^j r_0 \abs{\mathbf S_j} \int_0^{\kappa^j r_0} \frac{\tilde \omega_{\rm coef}(t)}{t}\,dt
+C \abs{\vec p_j} \int_0^{\kappa^j r_0} \frac{\tilde \omega_{\rm coef}(t)}{t}\,dt \\
				\label{eq0911sun}
&\quad+ C \kappa^j r_0 \int_0^{\kappa^j r_0} \frac{\tilde \omega_f(t)}{t}\,dt+ C \kappa^j r_0\norm{u}_{L^\infty(B_{r_0})} \int_0^{\kappa^j r_0} \frac{\tilde \omega_c(t)}{t}\,dt.
\end{align}

We require $r_0 \in (0,\frac12]$ to satisfy $r_0 \le r_1$, where $r_1= r_1(d, \lambda, \Lambda, \omega_{\rm coef}, \beta)>0$ is chosen so that
\[
C \int_0^{r_1} \frac{\tilde \omega_{\rm coef}(t)}{t}\,dt\le \frac12.
\]
Similarly to \eqref{eq7.51}, we derive from \eqref{eq0911sun} and \eqref{eq1628fri} the following inequality:
\begin{multline}			\label{eq1123fri}
\abs{\vec p_j} \le C \kappa^{j}r_0 \left\{\kappa^{\beta j}+\int_0^{\kappa^j r_0} \frac{\tilde \omega_{\rm coef}(t)}{t}\,dt\right\}\frac{1}{r_0} \fint_{B_{r_0}} \abs{Du}+C\kappa^j r_0 M_j(r_0)\int_0^{\kappa^j r_0}\frac{\tilde\omega_{\rm coef}(t)}{t}\,dt\\
+ C \kappa^j r_0 \int_0^{r_0}\frac{\tilde\omega_f(t)}{t}\,dt +C \kappa^j r_0 \norm{u}_{L^\infty(B_{r_0})} \int_0^{r_0}\frac{\tilde\omega_c(t)}{t}\,dt,
\end{multline}
where $C=C(d, \lambda, \Lambda,\omega_{\rm coef},\beta)$.

Then, by applying \eqref{eq1218fri}, \eqref{eq1628fri}, and \eqref{eq1123fri}, we obtain the following estimate, which is analogous to \eqref{eq1920thu}:
\begin{align}				\nonumber
&\norm{Du-\mathbf S_jx-\vec p_j}_{L^\infty(B_{\frac12 \kappa^j r_0})}  \le C \kappa^j r_0 \left\{\kappa^{\beta j}+ \int_0^{\kappa^j r_0} \frac{\tilde \omega_{\rm coef}(t)}{t}\,dt \right\} \frac{1}{r_0} \fint_{B_{r_0}} \abs{Du}\\
						\nonumber
&\quad+C \kappa^j r_0 M_j(r_0) \int_0^{\kappa^j r_0} \frac{\tilde\omega_{\rm coef}(t)}{t}\,dt + C \kappa^j r_0 \left( \int_0^{\kappa^j r_0} \frac{\tilde \omega_f(t)}{t}\,dt + \norm{u}_{L^\infty(B_{r_0})}\int_0^{\kappa^j r_0} \frac{\tilde \omega_c(t)}{t}\,dt\right)\\
						\label{eq1720fri}
&\quad +C \kappa^j r_0 \left(\int_0^{r_0} \frac{\tilde \omega_f(t)}{t}\,dt+\norm{u}_{L^\infty(B_{r_0})}\int_0^{r_0} \frac{\tilde \omega_c(t)}{t}\,dt\right)\int_0^{\kappa^j r_0} \frac{\tilde \omega_{\rm coef}(t)}{t}\,dt.
\end{align}
Additionally, similar to \eqref{eq1555sun}, we also obtain
\begin{multline}				\label{eq1244fri}
\frac 2 {\kappa^j r_0}\norm{Du}_{L^\infty(B_{\frac12 \kappa^j r_0})}
\le \frac C {r_0} \fint_{B_{r_0}} \abs{Du} + C M_j(r_0) \int_0^{r_0} \frac{\tilde \omega_{\rm coef}(t)}{t}\,dt\\
+C \int_0^{r_0}\frac{\tilde\omega_f(t)}{t}\,dt+ C \norm{u}_{L^\infty(B_{r_0})}\int_0^{r_0}\frac{\tilde\omega_c(t)}{t}\,dt,
\end{multline}
where $C=C(d, \lambda, \Lambda, \omega_{\rm coef}, \beta)$.

Then, by following the same proof as in Lemma~\ref{lem1548sun}, we conclude from \eqref{eq1244fri} that there exists $r_0=r_0(d, \lambda, \Lambda, \omega_{\rm coef}, \beta) \in (0,\frac12]$ such that
\begin{equation}			\label{eq1706fri}
\sup_{j \ge 1} M_j(r_0) \le \frac{C}{r_0} \fint_{B_{2r_0}} \abs{Du}+C \int_0^{r_0} \frac{\tilde \omega_f(t)}{t}\,dt + C \norm{u}_{L^\infty(B_{r_0})} \int_0^{r_0} \frac{\tilde \omega_c(t)}{t}\,dt,
\end{equation}
where $C=C(d, \lambda, \Lambda, \omega_{\rm coef}, \beta)$.

It follows from \eqref{eq1718thu} and \eqref{eq1706fri} that the sequence $\{\mathbf S_j\}$ converges to some $\mathbf S \in \mathbb{S}^d$, as it is a Cauchy sequence in $\mathbb{S}^d$.
Taking the limit as $k\to \infty$ in \eqref{eq1718thu} (with reference to \eqref{eq1706fri} and \eqref{eq1717thu}), and subsequently setting $l=j$, yields the following estimate:
\begin{align}
				\nonumber
\abs{\mathbf S_j -\mathbf S} + \frac{\abs{\vec p_j}}{\kappa^j r_0} &\le C \left( \kappa^{\beta j}+\int_0^{\kappa^j r_0} \frac{\tilde \omega_{\rm coef}(t)}{t}\,dt \right) \frac{1}{r_0} \fint_{B_{2r_0}} \abs{Du}\\
				\nonumber
&+C \left(\int_0^{r_0} \frac{\tilde \omega_f(t)}{t}\,dt +\norm{u}_{L^\infty(B_{r_0})}\int_0^{r_0} \frac{\tilde \omega_c(t)}{t}\,dt \right) \int_0^{\kappa^j r_0} \frac{\tilde \omega_{\rm coef}(t)}{t}\,dt\\
			\label{eq2203fri}
&+ C \int_0^{\kappa^j r_0} \frac{\tilde \omega_f(t)}{t}\,dt + C \norm{u}_{L^\infty(B_{r_0})}\int_0^{\kappa^j r_0} \frac{\tilde \omega_c(t)}{t}\,dt,
\end{align}
where $C=C(d, \lambda, \Lambda,\omega_{\rm coef}, \beta)$.

Then, similar to \eqref{eq2221sun}, it follows from \eqref{eq1720fri}, \eqref{eq1706fri}, and \eqref{eq2203fri} that
\begin{align*}
\norm{Du-\mathbf{S}x}_{L^\infty(B_{\frac12 \kappa^j r_0})} &\le
C \kappa^j r_0 \left\{\kappa^{\beta j}+ \int_0^{\kappa^j r_0} \frac{\tilde \omega_{\rm coef}(t)}{t}\,dt \right\}\frac{1}{r_0} \fint_{B_{2r_0}} \abs{Du}\\
&+C \kappa^j r_0 \left\{\int_0^{r_0} \frac{\tilde \omega_f(t)}{t}\,dt+\norm{u}_{L^\infty(B_{r_0})}\int_0^{r_0} \frac{\tilde \omega_c(t)}{t}\,dt\right\} \int_0^{\kappa^j r_0} \frac{\tilde\omega_{\rm coef}(t)}{t}\,dt\\
&+ C \kappa^j r_0\left\{ \int_0^{\kappa^j r_0} \frac{\tilde \omega_f(t)}{t}\,dt +\norm{u}_{L^\infty(B_{r_0})}\int_0^{\kappa^j r_0} \frac{\tilde \omega_c(t)}{t}\,dt \right\}.
\end{align*}

The previous inequality yields the following estimate:
\begin{multline}		\label{eq2010fri}
\frac{1}{r} \norm{Du-\mathbf{S} x}_{L^\infty(B_r)}
\le \varrho_{\rm coef}(r) \left\{\frac{1}{r_0} \fint_{B_{2r_0}} \abs{Du} + \varrho_c(r_0) \norm{u}_{L^\infty(B_{r_0})} + \varrho_{f}(r_0)\right\}\\
+\varrho_c(r)\norm{u}_{L^\infty(B_{r_0})}+\varrho_f(r)\quad \text{for all } r \in (0, r_0/2).
\end{multline}
Here, the moduli of continuity $\varrho_{\rm coef}(\cdot)$, $\varrho_c(\cdot)$, and $\varrho_f(\cdot)$ are defined as:
\begin{equation*}		
\begin{aligned}
\varrho_{\rm coef}(r)&:=C\left\{\left(\frac{2r}{\kappa r_0}\right)^\beta+\int_0^{2r/\kappa}\frac{\tilde\omega_{\rm coef}(t)}{t}\,dt\right\},\\
\varrho_c(r)&:=C \int_0^{2r/\kappa} \frac{\tilde \omega_c(t)}{t}\,dt,\\
\varrho_f(r)&:=C \int_0^{2r/\kappa} \frac{\tilde \omega_f(t)}{t}\,dt,
\end{aligned}
\end{equation*}
where $C=C(d, \lambda, \Lambda,\omega_{\rm coef}, \beta)$.

In particular, it follows from \eqref{eq2010fri} that $Du$ is differentiable at $0$.
Moreover, by combining \eqref{eq1628fri} and \eqref{eq1706fri}, and noting that $D^2u(0)=\mathbf{S}=\lim_{j\to \infty} \mathbf S_j$, we obtain
\[
\abs{D^2 u (0)} \le C \left\{\frac{1}{r_0} \fint_{B_{2r_0}} \abs{Du}+ \int_0^{r_0} \frac{\tilde \omega_f(t)}{t}\,dt+ \norm{u}_{L^\infty(B_{r_0})} \int_0^{r_0} \frac{\tilde \omega_c(t)}{t}\,dt\right\},
\]
where $C=C(d, \lambda, \Lambda, \omega_{\rm coef}, \beta)$.
Here we define
\[
\varrho_{\rm lot}(r):=\norm{u}_{L^\infty(B_{r_0})} \int_0^{r} \frac{\tilde \omega_c(t)}{t}\,dt.
\]
Note that $\varrho_{\rm lot}(r) \lesssim r^\alpha$ if $c \in C^\alpha$, and $\varrho_{\rm lot} \equiv 0$ if $c$ is constant.
This completes the proof of the theorem.
\qed

\section{Proof of Theorem \ref{thm02}: Simple case}			\label{sec4}
In this section, we consider an elliptic operator $\mathrm{L}$ of the form
\[
\mathrm{L} u=  a^{ij} D_{ij}u=\tr (\mathbf A D^2u).
\]
We also assume that the inhomogeneous term $f$ is zero.

We prove that if $D^2u(x^o)=0$ for some $x^o \in \Omega$, then $D^2u$ is differentiable $x^o$.
For simplicity, assume $x^o=0$ and $B_{1}(0) \subset \Omega$.

Let $\bar{\mathbf A}=(\mathbf A)_{B_r}$ denote the average of $\mathbf A$ over the ball $B_r=B_r(0)$, where $r \in (0,\frac12]$.
Decompose $u$ as $u=v+w$, where $w \in W^{2,p}(B_r) \cap W^{1,p}_0(B_r)$ (for some $p>1$) is the strong solution of the problem
\[
\tr(\bar{\mathbf A}D^2w) = -\tr((\mathbf{A}-\bar{\mathbf A})D^2u)\;\mbox{ in }\; B_r,\quad
w=0  \;\mbox{ on }\;\partial B_r.
\]

\begin{lemma}			\label{lem02}
Let $B=B_1(0)$ and $\mathbf A_0$ be a constant symmetric matrix satisfying the uniform ellipticity condition \eqref{ellipticity-nd}.
For $f \in L^p(B)$ with $p>1$, let $u \in W^{2,p}(B) \cap W^{1,p}_0(B)$ be the unique solution to the Dirichlet problem
\[
\left\{
\begin{aligned}
\tr(\mathbf A_0 D^2 u)&= f\;\mbox{ in }\; B,\\
u &= 0 \;\mbox{ on } \; \partial B.
\end{aligned}
\right.
\]
Then, for any $t>0$, the following estimate holds:
\[
\Abs{\{x \in B : \abs{D^2u(x)} > t\}}  \le \frac{C}{t} \int_{B} \abs{f},
\]
where $C=C(d, \lambda, \Lambda)$.
\end{lemma}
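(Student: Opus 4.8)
The plan is to represent $D^2 u$ as a Calder\'on--Zygmund operator applied to $f$ and then run the standard weak-type $(1,1)$ argument. First I would reduce to smooth $f$ by density: the general $f \in L^1(B)$ is handled by approximation, using that the interior estimates force $D^2 u_n \to D^2 u$ in $L^1_{\rm loc}(B)$ when $f_n \to f$ in $L^1(B)$, and that the weak-type inequality passes to the limit. For smooth $f$ I would use the Green's function $G$ for the operator $\tr(\mathbf A_0 D^2\,\cdot\,)$ on $B$ (which exists and is symmetric, since $\mathbf A_0$ is constant and symmetric) to write
\[
u(x) = \int_B G(x,y) f(y)\,dy, \qquad D^2_x u(x) = \mathrm{p.v.}\!\int_B D^2_x G(x,y) f(y)\,dy + \mathbf c(x) f(x),
\]
where the local term satisfies $\abs{\mathbf c(x)} \le C(d,\lambda,\Lambda)$.

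The two ingredients I would then establish are: (i) the kernel $K(x,y) := D^2_x G(x,y)$ obeys the Calder\'on--Zygmund bounds $\abs{K(x,y)} \le C\abs{x-y}^{-d}$ and $\abs{K(x,y) - K(x,y')} \le C\abs{y-y'}\abs{x-y}^{-d-1}$ whenever $\abs{x-y} \ge 2\abs{y-y'}$; and (ii) the a priori $L^2$ estimate $\norm{D^2 u}_{L^2(B)} \le C\norm{f}_{L^2(B)}$. For (i) I would split $G(x,y) = N(x-y) + h(x,y)$, with $N$ the fundamental solution of $\tr(\mathbf A_0 D^2\,\cdot\,)$ and $h(\cdot,y)$ the $\mathbf A_0$-harmonic correction vanishing on $\partial B$, and combine the explicit bounds $\abs{D^2 N(z)} \lesssim \abs{z}^{-d}$, $\abs{D^3 N(z)} \lesssim \abs{z}^{-d-1}$ with interior and boundary Schauder estimates for $\mathbf A_0$-harmonic functions having zero data on the smooth boundary $\partial B$ (symmetry of $G$ lets one treat the $y$-variable the same way). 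Estimate (ii) is the classical $W^{2,2}$ bound: after a linear change of variables reducing $\mathbf A_0$ to the identity, $B$ becomes a convex ellipsoid, so the Miranda--Talenti inequality gives $\int \abs{D^2 u}^2 \le \int (\Delta u)^2$, and tracing the change of variables back yields (ii) with $C = C(d,\lambda,\Lambda)$.

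With (i) and (ii) in hand, I would fix $t > 0$, extend $f$ by zero to $\bR^d$, and apply the Calder\'on--Zygmund decomposition at height $\sim t$: $f = g + b$ with $\norm{g}_{L^\infty} \lesssim t$, $\norm{g}_{L^1} \lesssim \norm{f}_{L^1}$, $b = \sum_k b_k$, the $b_k$ supported in disjoint cubes $Q_k$ with $\int b_k = 0$, $\norm{b_k}_{L^1} \lesssim t\abs{Q_k}$, and $\sum_k \abs{Q_k} \lesssim \norm{f}_{L^1}/t$. Letting $u_g$ and $u_b = u - u_g$ solve the Dirichlet problem with data $g$ and $b$, the good part is controlled by (ii) and Chebyshev: $\abs{\{\abs{D^2 u_g} > t/2\}} \le 4t^{-2}\norm{D^2 u_g}_{L^2}^2 \lesssim t^{-2}\norm{g}_{L^\infty}\norm{g}_{L^1} \lesssim \norm{f}_{L^1}/t$. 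For the bad part I would set $E^* = \bigcup_k Q_k^*$ with $Q_k^*$ a fixed dilate of $Q_k$, so $\abs{E^*} \lesssim \norm{f}_{L^1}/t$; for $x \in B \setminus E^*$ the local term does not contribute, and the cancellation $\int b_k = 0$ permits replacing $K(x,y)$ by $K(x,y) - K(x,y_k)$ (with $y_k$ the center of $Q_k$), so the smoothness estimate in (i) together with $\int_{\bR^d \setminus Q_k^*} \abs{y-y_k}\abs{x-y_k}^{-d-1}\,dx \lesssim 1$ gives $\int_{B \setminus E^*} \abs{D^2 u_b} \lesssim \sum_k \norm{b_k}_{L^1} \lesssim \norm{f}_{L^1}$, hence $\abs{\{x \in B \setminus E^* : \abs{D^2 u_b} > t/2\}} \lesssim \norm{f}_{L^1}/t$. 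Summing the three contributions yields the claim.

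The main obstacle is ingredient (i): the Calder\'on--Zygmund size and smoothness bounds on $D^2_x G(x,y)$, especially their uniform validity as $x$ or $y$ approaches $\partial B$. These are standard for constant-coefficient (indeed Dini-continuous) operators on smooth domains and are essentially the Green's function estimates underlying the divergence-form companion Lemma~\ref{lem01}; alternatively one may cite \cite{DK17}. I would also note that the tempting shortcut of replacing $G$ by the whole-space fundamental solution $N$, so that $D^2 u = D^2 N * f + (\text{$\mathbf A_0$-harmonic correction})$, does \emph{not} work: the boundary data of the harmonic correction carries no cancellation, and its second derivatives need not lie in weak-$L^1$ on all of $B$. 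The Dirichlet condition must be built into the kernel so that the cancellation $\int b_k = 0$ is tested against the full kernel $K$.
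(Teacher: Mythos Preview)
Your argument is correct: the weak-type $(1,1)$ bound follows from the Calder\'on--Zygmund machinery once one has the $L^2$ estimate and the H\"ormander condition for the kernel $D^2_x G(x,y)$, and your identification of the boundary-uniform kernel bounds as the main technical point (together with your warning that splitting off the whole-space Newtonian potential does not bypass it) is on target. The paper itself offers no argument here and simply refers to \cite[Lemma~2.20]{DK17} and \cite[Lemma~3.3]{CDKK24}; your sketch is essentially a self-contained reconstruction of what those references provide.
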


\begin{proof}
Refer to the proof of \cite[Lemma 3.3]{CDKK24} and \cite[Lemma 2.20]{DK17}.
\end{proof}

Applying Lemma \ref{lem02}, we establish that $D^2w \in L^{\frac12}(B_r)$, satisfying the estimate:
\begin{equation}			\label{eq2142fri}
\left(\fint_{B_r} \abs{D^2w}^{\frac12}\right)^{2} \le C \omega_{\mathbf A}(r) \norm{D^2u}_{L^\infty(B_r)},
\end{equation}
where $C=C(d,\lambda, \Lambda)$.

The function  $v=u-w$ satisfies
\[
\tr(\bar{\mathbf A} D^2 v)=0\;\text { in }\;B_r.
\]
By interior regularity for constant-coefficient elliptic equations, $v \in C^{\infty} (B_r)$, and
\[
\norm{D^2v}_{L^\infty(B_{r/2})} \le \frac{C}{r^2} \left(\fint_{B_r} \abs{v}^{\frac12}\right)^{2},
\]
where $C=C(d,\lambda, \Lambda)$.
This estimate remains valid if $v$ is replaced by $D^2v-\mathbf L$, where $\mathbf L$ is any affine symmetric matrix-valued function of the form
\[
\mathbf L(x)=x_1 \mathbf S^1 + \cdots +x_d \mathbf S^d + \mathbf P,\quad \text{with }\;\mathbf S^1, \ldots, \mathbf S^d, \mathbf P \in \mathbb{S}^d. 
\]

Let $\Sym^3(\bR^d)$ denote the space of symmetric $(0,3)$-tensors -- that is, fully symmetric trilinear forms -- over $\bR^d$.
Given $\mathcal{S} \in \Sym^3(\bR^d)$, we define its contraction with a vector $x \in \bR^d$ as the matrix
\[
\langle \mathcal{S}, x \rangle:= x_1 \mathbf S^1 + \cdots +x_d \mathbf S^d,
\]
where each slice $\mathbf S^k \in \mathbb{S}^d$ is defined by
\[
(\mathbf{S}^k)_{i,j}:=\mathcal{S}^{i,j,k}.
\]
In index notation, this contraction can be equivalently expressed as
\[
(\langle \mathcal{S}, x \rangle)_{i,j}=x_k\mathcal{S}^{i,j,k}. 
\]
With this notation, we define the affine space
\[
\mathfrak{S}=\left\{\mathbf L(x)= \langle \mathcal{S}, x \rangle + \mathbf{P}: \mathcal{S} \in \Sym^3(\bR^d),\; \mathbf{P} \in \mathbb{S}^d \right\}.
\]

Then, for any $\mathbf L \in \mathfrak{S}$, the following estimate holds:
\begin{equation}		\label{eq2135fri}
\norm{D^4v}_{L^\infty(B_{r/2})} \le \frac{C}{r^2} \left(\fint_{B_r} \abs{D^2v-\mathbf L}^{\frac12}\right)^{2}.
\end{equation}

By Taylor's theorem, for any $\rho \in (0, r]$,
\[
\sup_{x\in B_\rho}\;\abs{D^2v(x)-\langle D^3v(0), x \rangle - D^2v(0)}  \le C(d) \norm{D^4 v}_{L^\infty(B_\rho)} \,\rho^2,
\]
where $D^3v(0)$ is interpreted as an element of $\Sym^3(\bR^d)$.

Consequently, for any  $\kappa \in (0, \frac12)$ and any $\mathbf L \in \mathfrak{S}$, we obtain
\[
\left(\fint_{B_{\kappa r}} \abs{D^2v-\langle D^3v(0),x \rangle -D^2v(0)}^{\frac12}\right)^{2} \le C \norm{D^4v}_{L^\infty(B_{r/2})} (\kappa r)^2
\le C \kappa^2 \left(\fint_{B_r} \abs{D^2v-\mathbf L}^{\frac12}\right)^{2},
\]
where $C=C(d,\lambda, \Lambda)>0$.

We now define the function
\begin{equation}			\label{eq1100sat_nd}
\Phi(r):=\frac{1}{r} \,\inf_{\mathbf L \in \mathfrak{S}} \left(\fint_{B_r} \abs{D^2u-\mathbf L}^{\frac12}\right)^{2}.
\end{equation}
Since $u=v+w$, similar to \eqref{eq0224sat}, we obtain
\begin{equation*}
\kappa r \Phi(\kappa r) \le C \kappa^2 \left(\fint_{B_r} \abs{D^2u-\mathbf L}^{\frac12}\right)^{2} + C \left(\kappa^2+\kappa^{-2d}\right) \omega_{\mathbf A}(r) \norm{D^2u}_{L^\infty(B_r)}.
\end{equation*}

Let $\beta \in (0,1)$ be an arbitrary but fixed number.
Choose $\kappa=\kappa(d, \lambda,\Lambda, \beta) \in (0, \frac12)$ such that $C \kappa \le  \kappa^{\beta}$.
Then, we have
\begin{equation}			\label{eq1110sat_nd}
\Phi(\kappa r) \le \kappa^\beta \Phi(r) + C \omega_{\mathbf A}(r)\,\frac{1}{r} \norm{D^2u}_{L^\infty(B_r)},
\end{equation}
where $C=C(d, \lambda, \Lambda, \kappa)=C(d, \lambda, \Lambda, \beta)$.

Let $r_0 \in (0, \frac12]$ be a number to be chosen later.
By iterating \eqref{eq1110sat_nd}, we have, for $j=1,2,\ldots$,
\[
\Phi(\kappa^j r_0) \le \kappa^{\beta j} \Phi(r_0) + C \sum_{i=1}^{j} \kappa^{(i-1)\beta} \omega_{\mathbf A}(\kappa^{j-i} r_0)\,\frac{1}{\kappa^{j-i} r_0}\, \norm{D^2u}_{L^\infty(B_{\kappa^{j-i} r_0})}.
\]

By defining
\begin{equation}			\label{eq1244sat_nd}
\mathrm{M}_j(r_0):=\max_{0\le i < j}\, \frac{1}{\kappa^i r_0} \,\norm{D^2u}_{L^\infty(B_{\kappa^i r_0})} \quad \text{for }j=1,2, \ldots,
\end{equation}
we obtain
\begin{equation}			\label{eq1114sat_nd}
\Phi(\kappa^j r_0) \le \kappa^{\beta j} \Phi(r_0) + C \mathrm{M}_j(r_0) \tilde \omega_{\mathbf A}(\kappa^j r_0).
\end{equation}

For each fixed $r$, the infimum in \eqref{eq1100sat_nd} is realized by some $\mathbf L \in \mathfrak{S}$.
For each $j=0,1,2,\ldots$, let $\mathcal{S}_j \in \Sym^3(\bR^d)$ and $\mathbf P_j \in \mathbb{S}^d$ be chosen such that
\begin{equation}			\label{eq0203tue_nd}
\Phi(\kappa^j r_0)= \frac{1}{\kappa^j r_0}
\left(\fint_{B_{\kappa^j r_0}} \bigabs{D^2u-\langle \mathcal{S}_j, x\rangle -\mathbf P_j}^{\frac12}\right)^{2}.
\end{equation}

From \eqref{eq1100sat_nd} and H\"older's inequality, we have
\begin{equation}			\label{eq0538tue_nd}
\Phi(r_0) \le \frac{1}{r_0} \fint_{B_{r_0}} \abs{D^2u}.
\end{equation}
Combining \eqref{eq1114sat_nd} and \eqref{eq0538tue_nd}, we deduce
\begin{equation}\label{eq4.38_nd}
\Phi(\kappa^j r_0) \le  \frac{\kappa^{\beta j}}{r_0} \fint_{B_{r_0}} \abs{D^2u}+ C\mathrm{M}_j(r_0) \tilde \omega_{\mathbf A}(\kappa^j r_0).
\end{equation}

Next, observe that for $j=0,1,2,\ldots$, we have
\begin{equation}			\label{eq0218tue_nd}
\fint_{B_{\kappa^j r_0}} \bigabs{\langle \mathcal{S}_j,  x\rangle +\mathbf P_j}^{\frac12}
\le \fint_{B_{\kappa^j r_0}} \bigabs{D^2u-\langle \mathcal{S}_j,  x\rangle -\mathbf P_j}^{\frac12}+
\fint_{B_{\kappa^j r_0}} \abs{D^2u}^{\frac12}
\le 2\fint_{B_{\kappa^j r_0}} \abs{D^2u}^{\frac12}.
\end{equation}
Furthermore,
\begin{align*}
\abs{\mathbf P_j}^{\frac12} = \bigabs{\langle \mathcal{S}_j,  x\rangle + \mathbf P_j - 2(\langle \mathcal{S}_j,  x/2\rangle + \mathbf P_j)}^{\frac12} \le \bigabs{\langle \mathcal{S}_j,  x\rangle + \mathbf P_j}^{\frac12} + 2^{\frac12}\, \bigabs{\langle \mathcal{S}_j,  x/2\rangle + \mathbf P_j}^{\frac12}
\end{align*}
and
\[
\fint_{B_{\kappa^j r_0}} \bigabs{\langle \mathcal{S}_j,  x/2\rangle + \mathbf P_j}^{\frac12}  = \frac{2^d}{\abs{B_{\kappa^j r_0}}}\int_{B_{\kappa^j r_0/2}} \bigabs{\langle \mathcal{S}_j,  x\rangle + \mathbf P_j}^{\frac12}  \le 2^d\fint_{B_{\kappa^j r_0}} \bigabs{\langle \mathcal{S}_j,  x\rangle + \mathbf P_j}^{\frac12}.
\]
Thus,
\begin{equation}		\label{eq0213tue_nd}
\abs{\mathbf P_j} \le C \left(\fint_{B_{\kappa^j r_0}}\bigabs{\langle \mathcal{S}_j,  x\rangle + \mathbf P_j }^{\frac12}\right)^{2}\le C \left(\fint_{B_{\kappa^j r_0}} \abs{D^2u}^{\frac12}\right)^{2},\quad j=0,1,2,\ldots.
\end{equation}
Since $u \in C^2(\overline B_{1/2})$ by \cite[Theorem 1.6]{DK17} and $D^2u(0)=0$, the estimate \eqref{eq0213tue_nd} immediately implies
\begin{equation}			\label{eq0215tue_nd}
\lim_{j \to \infty} \mathbf P_j =0.
\end{equation}

\subsection*{Estimate of $\mathcal{S}_j$}
By the quasi-triangle inequality, we have
\[
\bigabs{\langle\mathcal{S}_j - \mathcal{S}_{j-1},x\rangle + \mathbf P_j -\mathbf P_{j-1}}^{\frac12} \le \bigabs{D^2u-\langle \mathcal{S}_j ,x\rangle- \mathbf P_j}^{\frac12} + \bigabs{D^2u-\langle \mathcal{S}_{j-1}, x\rangle- \mathbf P_{j-1}}^{\frac12}.
\]
Taking the average over $B_{\kappa^j r_0}$ and using the fact that $\abs{B_{\kappa^{j-1} r_0}}/ \abs{B_{\kappa^j r_0}} = \kappa^{-d}$, we obtain
\begin{equation}			\label{eq1949sat_nd}
\frac{1}{\kappa^j r_0}
\left(\fint_{B_{\kappa^j r_0}}\bigabs{\langle \mathcal{S}_j - \mathcal{S}_{j-1},x\rangle + \mathbf P_j -\mathbf P_{j-1}}^{\frac12}\right)^{2} \le C \Phi(\kappa^j r_0) + C \Phi(\kappa^{j-1} r_0)
\end{equation}
for $j=1,2,\ldots$, where $C=C(d, \lambda, \Lambda, \beta)$.

Next, observe that
\[
\abs{\mathbf P_j-\mathbf P_{j-1}}^{\frac12} = \bigabs{\langle \mathcal{S}_j - \mathcal{S}_{j-1},x\rangle + \mathbf P_j -\mathbf P_{j-1} - 2\left(\langle \mathcal{S}_j - \mathcal{S}_{j-1},x/2\rangle + \mathbf P_j -\mathbf P_{j-1}\right)}^{\frac12}.
\]
Using the quasi-triangle inequality, we obtain
\[
\abs{\mathbf P_j-\mathbf P_{j-1}}^{\frac12} \le \bigabs{\langle \mathcal{S}_j - \mathcal{S}_{j-1},x\rangle + \mathbf P_j -\mathbf P_{j-1}}^{\frac12} + 2^{\frac12} \,\bigabs{\langle\mathcal{S}_j - \mathcal{S}_{j-1},x/2\rangle + \mathbf P_j -\mathbf P_{j-1}}^{\frac12}.
\]
Moreover,
\[
\fint_{B_{\kappa^j r_0}} \bigabs{\langle \mathcal{S}_j - \mathcal{S}_{j-1},x/2\rangle + \mathbf P_j -\mathbf P_{j-1}}^{\frac12} \le
2^d\fint_{B_{\kappa^j r_0}} \bigabs{\langle \mathcal{S}_j -\mathcal{S}_{j-1},x \rangle + \mathbf P_j-\mathbf P_{j-1}}^{\frac12}.
\]
Combining these estimates, we conclude that
\[
\abs{\mathbf P_j-\mathbf P_{j-1}} \le C(d) \left(\fint_{B_{\kappa^j r_0}}\bigabs{\langle \mathcal{S}_j - \mathcal{S}_{j-1},x\rangle + \mathbf P_j -\mathbf P_{j-1}}^{\frac12}\right)^{2},\quad j=1,2,\ldots.
\]
Substituting this into \eqref{eq1949sat_nd}, we derive
\begin{equation}				\label{eq2247sat_nd}
\frac{1}{\kappa^j r_0} \abs{\mathbf P_j - \mathbf P_{j-1}} \le C \Phi(\kappa^j r_0)+C \Phi(\kappa^{j-1} r_0),\quad j=1,2,\ldots.
\end{equation}

On the other hand, for any $\mathcal{S} \in \Sym^3(\bR^d)$, we may write $\mathcal{S} = \abs{\mathcal{S}}\,\mathcal{T}$, where $\mathcal{T} \in \Sym^3(\bR^d)$ satisfies $\abs{\mathcal{T}} = 1$.
Then we have
\begin{equation}		\label{eq0229tue_nd}
\fint_{B_r} \abs{\langle \mathcal{S}, x\rangle}^{\frac12}  \ge \abs{\mathcal{S}}^{\frac12} \inf_{\abs{\mathcal T}=1}  \fint_{B_r}  \abs{\langle \mathcal{T}, x\rangle}^{\frac12} = \abs{\mathcal{S}}^{\frac12} \inf_{\abs{\mathcal T}=1} \fint_{B_1} r^{\frac12} \abs{\langle \mathcal{T}, x\rangle}^{\frac12} =C(d) r^{\frac12} \abs{\mathcal{S}}^{\frac12}.
\end{equation}

Then, by using \eqref{eq0229tue_nd}, the quasi-triangle inequality, \eqref{eq1949sat_nd}, \eqref{eq2247sat_nd},  \eqref{eq4.38_nd}, and the observation that $\mathrm{M}_{j-1}(r_0) \le \mathrm{M}_{j}(r_0)$, we obtain
\begin{align}
						\nonumber
\abs{\mathcal{S}_j-\mathcal{S}_{j-1}} &\le \frac{C}{\kappa^j r_0} \left(\fint_{B_{\kappa^j r_0}}\bigabs{\langle \mathcal{S}_j - \mathcal{S}_{j-1},x\rangle}^{\frac12}\right)^{2}\\
						\nonumber
& \le \frac{C}{\kappa^j r_0} \left(\fint_{B_{\kappa^j r_0}}\bigabs{\langle \mathcal{S}_j - \mathcal{S}_{j-1},x\rangle + \mathbf P_j -\mathbf P_{j-1}}^{\frac12}\right)^{2} + \frac{C}{\kappa^j r_0} \bigabs{\mathbf P_j - \mathbf P_{j-1}}\\
						\nonumber
&\le C \Phi(\kappa^j r_0)+C \Phi(\kappa^{j-1} r_0)\\
								\label{eq2316sat_nd}
&\le \frac{C\kappa^{\beta j}}{r_0}  \fint_{B_{r_0}} \abs{D^2u}+ C\mathrm{M}_j(r_0)\left\{\tilde \omega_{\mathbf A}(\kappa^j r_0) + \tilde \omega_{\mathbf A}(\kappa^{j-1} r_0)\right\}.
\end{align}

To estimate $\abs{\mathcal{S}_0}$, we proceed similarly to \eqref{eq2316sat_nd} by applying \eqref{eq0218tue_nd} and \eqref{eq0213tue_nd} with $j=0$, and using H\"older's inequality to obtain
\begin{equation}			\label{eq0230tue_nd}
\abs{\mathcal{S}_0} \le \frac{C}{r_0} \fint_{B_{r_0}} \abs{D^2u},
\end{equation}
where $C=C(d, \lambda, \Lambda, \beta)$.

For $k>l\ge 0$, we derive from \eqref{eq2316sat_nd} and the definition of $\mathrm{M}_j(r_0)$ that
\begin{align}
						\nonumber
\abs{\mathcal{S}_k -\mathcal{S}_l} &\le \sum_{j=l}^{k-1}\, \abs{\mathcal{S}_{j+1}-\mathcal{S}_j} \le  \sum_{j=l}^{k-1}  \frac{C\kappa^{\beta(j+1)}}{r_0} \fint_{B_{r_0}} \abs{D^2u}+ C\mathrm{M}_k(r_0) \sum_{j=l}^{k} \tilde \omega_{\mathbf A}(\kappa^j r_0) \\
						\label{eq0946tue_nd}
&\le  \frac{C \kappa^{\beta(l+1)}}{(1-\kappa^\beta)r_0} \fint_{B_{r_0}} \abs{D^2u}+ C\mathrm{M}_k(r_0) \int_0^{\kappa^l r_0} \frac{\tilde \omega_{\mathbf A}(t)}{t}\,dt.
\end{align}
In particular, by taking $k=j$ and $l=0$ in \eqref{eq0946tue_nd}, and using \eqref{eq0230tue_nd}, we obtain for $j=1,2,\ldots$ that
\begin{equation}			\label{eq0900tue_nd}
\abs{\mathcal{S}_j} \le \abs{\mathcal{S}_j-\mathcal{S}_0} + \abs{\mathcal{S}_0} \le \frac{C}{r_0} \fint_{B_{r_0}} \abs{D^2u}+ C\mathrm{M}_j(r_0) \int_0^{r_0} \frac{\tilde \omega_{\mathbf A}(t)}{t}\,dt.
\end{equation}

Similarly, we obtain from \eqref{eq2247sat} that for $k>l\ge 0$, we have
\begin{equation}			\label{eq0947tue_nd}
\abs{\mathbf P_k -\mathbf P_l} \le C \frac{\kappa^{(\beta+1)(l+1)}}{1-\kappa^{\beta+1}} \fint_{B_{r_0}} \abs{D^2u}+ C \kappa^l r_0 \mathrm{M}_k(r_0) \int_0^{\kappa^l r_0} \frac{\tilde \omega_{\mathbf A}(t)}{t}\,dt.
\end{equation}

\subsection*{Estimate for $\mathbf P_j$}
We shall derive improved estimates for $\abs{\mathbf P_j}$ using the following lemma, where we set 
\[
v(x):=u(x)-\frac{1}{6} \langle \mathcal{S}_j, x \rangle x\cdot x-\frac{1}{2} \mathbf P_j x \cdot x.
\]
Note that
\begin{equation}		\label{eq1852mon_nd}
D^2v(x)=D^2u(x)-\langle \mathcal{S}_j, x\rangle-\mathbf P_j.
\end{equation}
\begin{lemma}			\label{lem1702sat_nd}
For $0<r \le \frac12$, we have
\[
\sup_{B_{r}}\, \abs{D^2 v} \le C \left\{\left( \fint_{B_{2r}} \abs{D^2v}^{\frac12} \right)^{2} + (r \abs{\mathcal{S}_j}+\abs{\mathbf P_j}) \int_0^r \frac{\tilde \omega_{\mathbf A}(t)}{t}\,dt\right\},
\]
where $C=C(d, \lambda, \Lambda, \omega_{\mathbf A},\beta)$.
\end{lemma}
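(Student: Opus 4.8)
The plan is to follow the proof of Lemma~\ref{lem1702sat} verbatim, shifting everything up by one order of differentiation and replacing the divergence-form freezing argument with its non-divergence counterpart. First I would record the equation satisfied by $v$: since $\tr(\mathbf A D^2 u)=0$ in $B_1$ and $D^2v=D^2u-\langle\mathcal{S}_j,x\rangle-\mathbf P_j$ by \eqref{eq1852mon_nd}, we have
\[
\tr(\mathbf A D^2 v)=-\tr\!\left(\mathbf A\,(\langle\mathcal{S}_j,x\rangle+\mathbf P_j)\right)\quad\text{in }B_{2r}
\]
for every $0<r\le\tfrac12$. The right-hand side is the trace of $\mathbf A$ against a matrix-valued affine function, and since $x_0\in B_{3r/2}$ and $t\le r/4$ force $B_t(x_0)\subset B_{2r}$, its sup norm on $B_t(x_0)$ is controlled by $r\abs{\mathcal{S}_j}+\abs{\mathbf P_j}$.

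Next I would freeze the leading coefficient on the small ball: for fixed $x_0\in B_{3r/2}$ and $0<t\le r/4$, set $\bar{\mathbf A}:=(\mathbf A)_{B_t(x_0)}$ and decompose $v=v_1+v_2$, where $v_1\in W^{2,p}(B_t(x_0))\cap W^{1,p}_0(B_t(x_0))$ (for some $p>1$) is the strong solution of
\[
\tr(\bar{\mathbf A}D^2v_1)=\tr\!\left((\bar{\mathbf A}-\mathbf A)D^2v\right)-\tr\!\left((\mathbf A-\bar{\mathbf A})(\langle\mathcal{S}_j,x\rangle+\mathbf P_j)\right)\quad\text{in }B_t(x_0),
\]
with $v_1=0$ on $\partial B_t(x_0)$. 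Applying Lemma~\ref{lem02} after rescaling, exactly as \eqref{eq2142fri} was derived, and using the bound on $\abs{\langle\mathcal{S}_j,x\rangle+\mathbf P_j}$ above, I would obtain
\[
\left(\fint_{B_t(x_0)}\abs{D^2v_1}^{\frac12}\right)^{2}\le C\,\omega_{\mathbf A}(t)\left(\norm{D^2v}_{L^\infty(B_t(x_0))}+r\abs{\mathcal{S}_j}+\abs{\mathbf P_j}\right).
\]
The complementary piece $v_2=v-v_1$ then satisfies $L_0v_2:=\tr(\bar{\mathbf A}D^2v_2)=-\tr(\bar{\mathbf A}(\langle\mathcal{S}_j,x\rangle+\mathbf P_j))$, whose right-hand side is an affine function of $x$; differentiating twice shows $L_0(D^2v_2)=0$ in $B_t(x_0)$, so $D^2v_2$ is a matrix-valued solution of a constant-coefficient uniformly elliptic equation and thus enjoys the usual interior smoothness and derivative estimates.

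With these two ingredients in place, the conclusion follows from the same dyadic iteration in $t$ used to prove \cite[Theorem 1.6]{DK17}: one compares $D^2v$ on $B_t(x_0)$ with the frozen ``harmonic'' piece $D^2v_2$, treats the $L^{1/2}$ bound on $D^2v_1$ as the perturbative error, sums the resulting geometric series over the scales $\kappa^k t$, and converts $\sum_k\omega_{\mathbf A}(\kappa^k t)$ into $\int_0^t\tilde\omega_{\mathbf A}(s)/s\,ds$ as in \eqref{rmk1147}. I expect the only genuinely delicate point to be purely bookkeeping: carrying the inhomogeneous contributions $r\abs{\mathcal{S}_j}+\abs{\mathbf P_j}$ correctly through every scale so that they accumulate into the single factor $\int_0^r\tilde\omega_{\mathbf A}(t)/t\,dt$ appearing in the statement, precisely as in the divergence-form Lemma~\ref{lem1702sat}; all of the analytic content is supplied by Lemma~\ref{lem02} together with the cited argument from \cite{DK17}.
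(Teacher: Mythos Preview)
Your proposal is correct and follows essentially the same approach as the paper: the same equation for $v$, the same freezing and decomposition $v=v_1+v_2$ on $B_t(x_0)$, the same weak-type estimate from Lemma~\ref{lem02} yielding the $\omega_{\mathbf A}(t)(\norm{D^2v}_{L^\infty}+r\abs{\mathcal{S}_j}+\abs{\mathbf P_j})$ bound on $D^2v_1$, the same observation that $\mathrm{L}_0 v_2$ is affine so $\mathrm{L}_0(D^2v_2)=0$, and the same deferral to the iteration argument of \cite[Theorem~1.6]{DK17}.
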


\begin{proof}
Since $\tr(\mathbf{A} D^2u)=0$ in $B_1$, it follows that
\[
\tr(\mathbf A D^2v) = -\tr (\mathbf A(\langle \mathcal{S}_j, x\rangle+\mathbf P_j))  \quad\text{in }\, B_{2r},
\]
for any $0<r \le \frac12$.
Let $x_0 \in B_{3r/2}$ and $0<t \le r/4$, and denote $\bar{\mathbf A}:=(\mathbf A)_{B_t(x_0)}$.

We decompose $v$ as $v=v_1+v_2$, where $v_1 \in W^{2,p} \cap W^{1,p}_0(B_t(x_0))$ (for some $p>1$) is the strong solution to the problem
\[
\tr(\bar{\mathbf A}D^2 v_1) = -\tr ( (\mathbf{A}-\bar{\mathbf A}) (\langle \mathcal{S}_j, x\rangle+\mathbf P_j)+ (\mathbf{A}- \bar{\mathbf A})D^2v)\quad\text{in }\,B_t(x_0),\qquad
\]
with boundary condition $v_1=0$ on $\partial B_t(x_0)$.

By Lemma \ref{lem02} and rescaling, we obtain
\[
\left(\fint_{B_t(x_0)} \abs{D^2v_1}^{\frac12} \right)^{2}
\le C \left(\fint_{B_t(x_0)}\abs{\mathbf A-\bar{\mathbf A}}\right) \left(r\abs{\mathcal{S}_j}+\abs{\mathbf P_j}\right)
+C \left(\fint_{B_t(x_0)}\abs{\mathbf A-\bar{\mathbf A}}\right)\norm{D^2v}_{L^\infty(B_t(x_0))},
\]
and thus, we have
\[
\left(\fint_{B_t(x_0)} \abs{D^2v_1}^{\frac12} \right)^{2} \le C \omega_{\mathbf A}(t)\left(r\abs{\mathcal{S}_j}+\abs{\mathbf P_j}\right)+ C  \omega_{\mathbf A}(t)\norm{D^2v}_{L^\infty(B_t(x_0))}.
\]

On the other hand, observe that $v_2=v-v_1$ satisfies
\[
\mathrm{L}_0v_2:= \tr(\bar{\mathbf A}D^2v_2) = -\tr \bar{\mathbf A} (\langle \mathcal{S}_j, x\rangle+\mathbf P_j) = \text{affine function} \quad\text{in }\,  B_t(x_0).
\]
Since $\mathrm{L}_0$ is a constant-coefficients operator, it follows that $\mathrm{L}_0 (D^2v_2)=0$.
The remainder of the proof then proceeds identically to that of \cite[Theorem 1.6]{DK17}.
\end{proof}

By Lemma \ref{lem1702sat_nd}, \eqref{eq1852mon_nd}, \eqref{eq0203tue_nd}, and \eqref{eq4.38_nd}, we have (cf. \eqref{eq0934wed0})
\begin{multline}				\label{eq0934wed0_nd}
 \norm{D^2u-\langle \mathcal{S}_j, x\rangle-\mathbf P_j}_{L^\infty(B_{\frac12 \kappa^j r_0})} \le C\kappa^{(1+\beta)j} \fint_{B_{r_0}} \abs{D^2u} + C\kappa^j r_0 \mathrm{M}_j(r_0) \tilde\omega_{\mathbf A}(\kappa^j r_0)\\
+ C \left(\kappa^j r_0\abs{\mathcal{S}_j}+\abs{\mathbf P_j}\right) \int_0^{\kappa^j r_0} \frac{\tilde\omega_{\mathbf A}(t)}{t}\,dt,\quad j=1,2,\ldots,
\end{multline}
where $C=C(d, \lambda, \Lambda, \omega_{\mathbf A}, \beta)$.
Using $D^2u(0)=0$, we infer from \eqref{eq0934wed0_nd} that
\begin{multline*}
\abs{\mathbf P_j} \le C\kappa^{(1+\beta)j} \fint_{B_{r_0}} \abs{D^2u} + C\kappa^j r_0 \mathrm{M}_j(r_0) \tilde\omega_{\mathbf A}(\kappa^j r_0)\\
+C \abs{\mathcal{S}_j}\kappa^j r_0 \int_0^{\kappa^j r_0} \frac{\tilde \omega_{\mathbf A}(t)}{t}\,dt
+C \abs{\mathbf P_j} \int_0^{\kappa^j r_0} \frac{\tilde\omega_{\mathbf A}(t)}{t}\,dt.
\end{multline*}
Let us fix $r_1(d, \lambda, \Lambda,\omega_{\mathbf A}, \beta) >0$ such that
\[
C \int_0^{r_1} \frac{\tilde \omega_{\mathbf A}(t)}{t}\,dt\le \frac12.
\]
We will later require $r_0 \le r_1$.
This ensures that
\[
\abs{\mathbf P_j} \le  C\kappa^{(1+\beta)j} \fint_{B_{r_0}} \abs{D^2u} + C\kappa^j r_0 \mathrm{M}_j(r_0) \tilde\omega_{\mathbf A}(\kappa^j r_0)
+C\abs{\mathcal{S}_j}\kappa^j r_0\int_0^{\kappa^j r_0} \frac{\tilde \omega_{\mathbf A}(t)}{t}\,dt.
\]
This, together with \eqref{eq0900tue_nd} and \eqref{rmk1147_2}, yields
\begin{equation}			\label{eq7.51_nd}
\abs{\mathbf P_j} \le C \kappa^{j} r_0 \left\{\kappa^{\beta j}+\int_0^{\kappa^j r_0} \frac{\tilde \omega_{\mathbf A}(t)}{t}\,dt\right\}\frac{1}{r_0} \fint_{B_{r_0}} \abs{D^2u} +C\kappa^j r_0 \mathrm{M}_j(r_0) \int_0^{\kappa^j r_0} \frac{\tilde \omega_{\mathbf A}(t)}{t}\,dt.
\end{equation}

\subsection*{Convergence of $\mathcal{S}_j$}
By \eqref{eq0934wed0_nd}, \eqref{eq0900tue_nd}, \eqref{eq7.51_nd}, and \eqref{rmk1147_2}, we have
\begin{multline}				\label{eq1920thu_nd}
\norm{D^2u-\langle \mathcal{S}_j,x\rangle-\mathbf P_j}_{L^\infty(B_{\frac12 \kappa^j r_0})}
\le C \kappa^j r_0 \left\{\kappa^{\beta j}+ \int_0^{\kappa^j r_0} \frac{\tilde \omega_{\mathbf A}(t)}{t}\,dt \right\}\frac{1}{r_0} \fint_{B_{r_0}} \abs{D^2u}\\
+C\kappa^j r_0 \mathrm{M}_j(r_0)  \int_0^{\kappa^j r_0} \frac{\tilde \omega_{\mathbf A}(t)}{t}\,dt.
\end{multline}
Then, from \eqref{eq1920thu_nd}, \eqref{eq0900tue_nd}, and \eqref{eq7.51_nd}, we infer that
\begin{equation}				\label{eq1555sun_nd}
\frac {1}{\kappa^{j} r_0}\norm{D^2u}_{L^\infty(B_{\frac12 \kappa^j r_0})}
\le \frac C {r_0} \fint_{B_{r_0}} \abs{D^2u} + C \mathrm{M}_j(r_0) \int_0^{r_0} \frac{\tilde \omega_{\mathbf A}(t)}{t}\,dt,
\end{equation}
where $C=C(d, \lambda, \Lambda, \omega_{\mathbf A}, \beta)$.

\begin{lemma}				\label{lem1548sun_nd}
There exists a constant $r_0=r_0(d, \lambda, \Lambda, \omega_{\mathbf A}, \beta) \in (0,\frac12)$ such that
\begin{equation}			\label{eq1951thu_nd}
\sup_{j \ge 1} \mathrm{M}_j(r_0) =\sup_{i \ge 0} \frac{1}{\kappa^i r_0} \,\norm{D^2u}_{L^\infty(B_{\kappa^i r_0})} \le \frac{C}{r_0} \fint_{B_{2r_0}} \abs{D^2u},
\end{equation}
where $C=C(d, \lambda, \Lambda, \omega_{\mathbf A}, \beta)$.
\end{lemma}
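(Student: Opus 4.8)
The plan is to transcribe the proof of Lemma~\ref{lem1548sun} verbatim, replacing $Du$ everywhere by $D^2u$ and swapping the divergence-form ingredients for their non-divergence analogues established in this section. First I would set
\[
c_i:=\frac{1}{\kappa^i r_0}\,\norm{D^2u}_{L^\infty(B_{\kappa^i r_0})},\qquad i=0,1,2,\ldots,
\]
so that, by the definition \eqref{eq1244sat_nd}, $\mathrm{M}_1(r_0)=c_0$ and $\mathrm{M}_{j+1}(r_0)=\max(\mathrm{M}_j(r_0),c_j)$ for $j\ge1$. With $\kappa=\kappa(d,\lambda,\Lambda,\beta)\in(0,\tfrac12)$ already fixed, estimate \eqref{eq1555sun_nd} supplies a constant $C=C(d,\lambda,\Lambda,\omega_{\mathbf A},\beta)>0$ with
\[
c_{j+1}\le C\left\{\frac{1}{r_0}\fint_{B_{r_0}}\abs{D^2u}+\mathrm{M}_j(r_0)\int_0^{r_0}\frac{\tilde\omega_{\mathbf A}(t)}{t}\,dt\right\},\qquad j=1,2,\ldots.
\]

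Next I would invoke the interior $C^2$ estimate for solutions of $\tr(\mathbf A D^2u)=0$ with $\mathbf A\in\mathrm{DMO}$, namely \cite[Theorem~1.6]{DK17} (the analogue of \cite[Theorem~1.5]{DK17} used in Lemma~\ref{lem1548sun}), to bound
\[
c_1=\frac{1}{\kappa r_0}\norm{D^2u}_{L^\infty(B_{\kappa r_0})}\le\frac{1}{\kappa}c_0\le\frac{C}{r_0}\fint_{B_{2r_0}}\abs{D^2u},
\]
where the first inequality is merely monotonicity of the sup-norm and the last is the interior Hessian bound. Combining the two displays yields a constant $\gamma=\gamma(d,\lambda,\Lambda,\omega_{\mathbf A},\beta)>0$ such that $c_0,c_1\le\frac{\gamma}{r_0}\fint_{B_{2r_0}}\abs{D^2u}$ and $c_{j+1}\le\gamma\bigl\{\frac{1}{r_0}\fint_{B_{2r_0}}\abs{D^2u}+\mathrm{M}_j(r_0)\int_0^{r_0}\tilde\omega_{\mathbf A}(t)/t\,dt\bigr\}$ for $j\ge1$.

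Finally, since $\tilde\omega_{\mathbf A}$ inherits the Dini property from $\omega_{\mathbf A}$, I would fix $r_0\in(0,\tfrac12]$ small enough that $\gamma\int_0^{r_0}\tilde\omega_{\mathbf A}(t)/t\,dt\le\tfrac12$ and, simultaneously, $r_0\le r_1$. For such $r_0$ the recursion collapses to $c_0,c_1\le\frac{\gamma}{r_0}\fint_{B_{2r_0}}\abs{D^2u}$ and $c_{j+1}\le\frac{\gamma}{r_0}\fint_{B_{2r_0}}\abs{D^2u}+\tfrac12\mathrm{M}_j(r_0)$ for $j\ge1$, and the same two-step induction on $k$ as in Lemma~\ref{lem1548sun} (using $\mathrm{M}_{j+1}=\max(\mathrm{M}_j,c_j)$) gives
\[
c_{2k},\;c_{2k+1},\;\mathrm{M}_{2k+1}(r_0),\;\mathrm{M}_{2k+2}(r_0)\le\frac{\gamma}{r_0}\fint_{B_{2r_0}}\abs{D^2u}\sum_{i=0}^{k}2^{-i}\le\frac{2\gamma}{r_0}\fint_{B_{2r_0}}\abs{D^2u},
\]
which is \eqref{eq1951thu_nd}. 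I expect no genuine obstacle here: the entire argument is a transcription, and the only point requiring care is to check that the iteration constants produced by \eqref{eq1555sun_nd} and \cite[Theorem~1.6]{DK17} depend only on the listed parameters — which they do, since $\kappa$ is already chosen and $\tilde\omega_{\mathbf A}$ depends only on $\omega_{\mathbf A}$, $\kappa$, and $\beta$ through \eqref{eq1245sat}.
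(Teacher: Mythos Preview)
Your proposal is correct and is precisely what the paper intends: its proof of Lemma~\ref{lem1548sun_nd} consists solely of the instruction ``Refer to the proof of Lemma \ref{lem1548sun},'' and your transcription---replacing $Du$ by $D^2u$, invoking \eqref{eq1555sun_nd} in place of \eqref{eq1555sun}, and citing \cite[Theorem~1.6]{DK17} rather than \cite[Theorem~1.5]{DK17}---carries this out faithfully.
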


\begin{proof}
Refer to the proof of Lemma \ref{lem1548sun}.
\end{proof}

Now, Lemma \ref{lem1548sun_nd} and \eqref{eq0946tue_nd} imply that the sequence $\{\mathcal{S}_j\}$ is a Cauchy sequence in $\Sym^3(\bR^d)$, and thus $\mathcal{S}_j \to \mathcal{S}$ for some $\mathcal{S} \in \Sym^3(\bR^d)$.
Moreover, by taking the limit as $k\to \infty$ in \eqref{eq0946tue_nd} and \eqref{eq0947tue_nd} (while recalling \eqref{eq1951thu_nd} and \eqref{eq0215tue_nd}), respectively, and then setting $l=j$, we obtain the following estimates:
\begin{equation}	\label{eq1806sun_nd}
\begin{aligned}
\abs{\mathcal{S}_j-\mathcal{S}} &\le C \left\{ \kappa^{\beta j}+ \int_0^{\kappa^j r_0} \frac{\tilde \omega_{\mathbf A}(t)}{t}\,dt\right\}\frac{1}{r_0} \fint_{B_{2r_0}} \abs{D^2u},\\
\abs{\mathbf P_j}  &\le C \kappa^j \left\{\kappa^{\beta j} +\int_0^{\kappa^j r_0} \frac{\tilde \omega_{\mathbf A}(t)}{t}\,dt \right\}\fint_{B_{2r_0}} \abs{D^2u}.
\end{aligned}
\end{equation}

By the triangle inequality, \eqref{eq1920thu_nd}, \eqref{eq1806sun_nd}, and \eqref{eq1951thu_nd}, we obtain
\begin{align}			\nonumber
\norm{D^2u-\langle \mathcal{S}, x \rangle}_{L^\infty(B_{\frac12 \kappa^j r_0})}& \le \norm{D^2u- \langle \mathcal{S}_j, x\rangle -\mathbf  P_j}_{L^\infty(B_{\frac12 \kappa^j r_0})} + \frac{\kappa^j r_0}{2} \abs{\mathcal{S}_j - \mathcal{S}} +  \abs{\mathbf P_j}\\
					\label{eq2221sun_nd}
&\le C \kappa^j r_0\left\{\kappa^{\beta j}+\int_0^{\kappa^j r_0} \frac{\tilde\omega_{\mathbf A}(t)}{t}\,dt\right\} \frac{1}{r_0} \fint_{B_{2r_0}} \abs{D^2u}.
\end{align}

\subsection*{Conclusion}
It follows from \eqref{eq2221sun_nd} that
\begin{equation}		\label{eq1036wed_nd}
\frac{1}{r} \norm{D^2u-\langle \mathcal{S},x\rangle}_{L^\infty(B_r)}  \le \varrho_{\mathbf A}(r) \left( \frac{1}{r_0} \fint_{B_{2r_0}} \abs{D^2u} \right),
\end{equation}
where
\begin{equation}			\label{eq2143sat_nd}
\varrho_{\mathbf A}(r)=C\left\{\left(\frac{2r}{\kappa r_0}\right)^\beta+\int_0^{2r/\kappa} \frac{\tilde\omega_{\mathbf A}(t)}{t}\,dt\right\}.
\end{equation}
Note that $\varrho_{\mathbf A}$ is a modulus of continuity  determined by $d$, $\lambda$, $\Lambda$, $\omega_{\mathbf A}$, and $\beta \in (0,1)$.

In particular, we conclude from \eqref{eq1036wed_nd} that $D^2u$ is differentiable at $0$.
Moreover, it follows from \eqref{eq0900tue_nd} and \eqref{eq1951thu_nd} that (noting that $D^3u(0)=\mathcal{S}=\lim_{j\to \infty} \mathcal{S}_j$) we have
\[
\abs{D^3 u (0)} \le \frac{C}{r_0} \fint_{B_{2r_0}} \abs{D^2u},
\]
where $C=C(d, \lambda, \Lambda, \omega_{\mathbf A},\beta)$.

If $\mathbf A \in C^\alpha$ for some $\alpha \in (0,1)$, we have $\varrho_{\mathbf A}(r) \lesssim r^\alpha$ by choosing $\beta \in (\alpha,1)$ in \eqref{eq2143sat_nd}.
This complete the proof in the special case.
 \qed

\section{Proof of Theorem \ref{thm02}: General case}		\label{sec5}

We now proceed with the proof of Theorem \ref{thm02} in the general setting.

\smallskip
Define $\omega_{\rm coef}(\cdot)$ by
\begin{multline}		\label{omega_coef_nd}
\omega_{\rm coef}(r) := \omega_{\mathbf A}(r) +  r^{1-d} \sup_{x \in \Omega} \int_{\Omega \cap B_r(x)} \abs{\vec b} + r^{1-d} \sup_{x \in \Omega} \int_{\Omega \cap B_r(x)} \abs{D\vec b}\\
+r^{1-d} \sup_{x \in \Omega} \int_{\Omega \cap B_r(x)} \abs{c}+ r^{1-d} \sup_{x \in \Omega} \int_{\Omega \cap B_r(x)}\abs{Dc}.
\end{multline}
It is clear that $\omega_{\rm coef}(\cdot)$ satisfies the Dini condition:
\[
\int_0^1 \frac{\omega_{\rm coef}(t)}{t} \,dt<\infty.
\]
Although one might expect different powers of $r$ to appear in the definition \eqref{omega_coef_nd}, we retain the current form to maintain consistency with the notation  $\omega_{\rm coef}(\cdot, x_0)$ introduced in the proof of Lemma~\ref{lem2255sat}.

\smallskip
Let $u$ be a solution to \eqref{eq_main-nd}, meaning that $u$ satisfies
\[
\tr (\mathbf A D^2 u)+\vec b\cdot Du + cu=f\quad\text{in }\;\Omega.
\]

By \cite[Theorem 1.5]{DEK18} (see also \cite[Proposition 2.24]{DEK18}), we know that $D^2  u$ is continuous in $\Omega$.
Suppose $D^2u(x^o) = 0$ for some $x^o \in \Omega$.
As before, we may assume without loss of generality that $x^o = 0$ and $B_2(0) \subset \Omega$.

For $0<r \le \frac12$, denote $\bar{\mathbf A}:=(\mathbf A)_{B_r}$, the average of $\mathbf A$ over $B_r$, and define
\begin{align}
			\nonumber
\widetilde{\vec b \cdot Du}&= \left(\vec b -(\vec b)_{B_r} -(D \vec b)_{B_r} x\right)\cdot Du+ (\vec b)_{B_r}\cdot \left(Du-(Du)_{B_r}\right)+\left(Du - (Du)_{B_r}\right)\cdot  (D \vec b)_{B_r}\,x,\\
			\nonumber
\widetilde{cu}&=\left(c- (c)_{B_r}- (Dc)_{B_r}\cdot x\right) u+ (c)_{B_r} \left(u-(u)_{B_r}-(Du)_{B_r}\cdot x\right)+\left(u - (u)_{B_r}\right) (Dc)_{B_r}\cdot x,\\
			\label{eq1604sat}
\widetilde{f}&=f- (f)_{B_r} - (Df)_{B_r} \cdot x.
\end{align}

We decompose $u$ as $u=v+w$, where $w \in W^{2,p} \cap W^{1,p}_0(B_r)$ (for some $p>1$) is the solution of the problem
\[
\tr( \bar{\mathbf A} D^2w) = -\tr((\mathbf{A}-\bar{\mathbf A})D^2u)+ \widetilde f- \widetilde{\vec b \cdot Du} -\widetilde{cu}\;\mbox{ in }\; B_r,\quad
w=0 \;\mbox{ on }\;\partial B_r.
\]

By Lemma \ref{lem02}, we obtain the following estimate via rescaling:
\begin{equation}			\label{eq1358wed}
\left(\fint_{B_r} \abs{D^2w}^{\frac12}\,dx\right)^{2} \le C\omega_{\mathbf A}(r) \norm{D^2u}_{L^\infty(B_r)}+C \fint_{B_r} \abs{\widetilde{f}}+\fint_{B_r} \abs{\widetilde{\vec b \cdot Du}} +\fint_{B_r} \abs{\widetilde{cu}}.
\end{equation}
Note that the Poincar\'e inequality implies
\begin{equation}			\label{eq1357wed}
\fint_{B_r} \abs{f- (f)_{B_r} - (Df)_{B_r} \cdot x} \le C r \fint_{B_r} \abs{Df- (Df)_{B_r}}.
\end{equation}
Therefore, we obtain
\[
\fint_{B_r} \abs{\widetilde f} \le Cr \omega_{Df}(r).
\]
By applying \eqref{eq1357wed} to $\vec b$, $c$, and $u$, we also derive the following estimates:
\begin{align*}
\fint_{B_r} \abs{\widetilde{\vec b \cdot Du}}& \le C r\omega_{D \vec b}(r)\norm{Du}_{L^\infty(B_r)}+Cr \norm{D^2u}_{L^\infty(B_r)} \fint_{B_r} \abs{\vec b}+ Cr^2 \norm{D^2u}_{L^\infty(B_r)} \fint_{B_r} \abs{D\vec b},\\
\fint_{B_r} \abs{\widetilde{cu}} &\le Cr\omega_{Dc}(r)\norm{u}_{L^\infty(B_r)}+ C r^2\norm{D^2u}_{L^\infty(B_r)}\fint_{B_r} \abs{c}+ C r^2 \norm{Du}_{L^\infty(B_r)}\fint_{B_r} \abs{Dc}.
\end{align*}

We define $\omega_{\rm lot}(r)$ as follows:
\begin{equation}		\label{omega_lot}
\omega_{\rm lot}(r) :=\norm{Du}_{L^\infty(B_r)} \left(\omega_{D \vec b}(r) + r^{1-d} \sup_{x\in \Omega} \int_{\Omega \cap B_r(x)} \abs{Dc} \right)+ \norm{u}_{L^\infty(B_r)}\omega_{Dc}(r).
\end{equation}
It is clear that $\omega_{\rm lot}(\cdot)$ satisfy the Dini condition:
\[
\int_0^1 \frac{\omega_{\rm lot}(t)}{t} \,dt  <\infty.
\]
Considering the  definitions \eqref{omega_coef_nd}, \eqref{omega_lot}, and above estimates, we obtain from \eqref{eq1358wed} that:
\begin{equation}			\label{eq2358wed}
\left(\fint_{B_r} \abs{D^2w}^{\frac12}\right)^{2} \le C \omega_{\rm coef}(r) \norm{D^2u}_{L^\infty(B_r)} + Cr \omega_{Df}(r)+Cr\omega_{\rm lot}(r).
\end{equation}

It is clear that $f-\widetilde f$ is an affine function. 
Similarly, observe that both $cu- \widetilde{cu}$ and $\vec b\cdot Du- \widetilde{\vec b\cdot Du}$ are affine functions as well.
Therefore, $v=u-w$ satisfies
\[
\mathrm{L}_0 v:=\tr(\bar{\mathbf A} D^2v)= \text{affine function}\;\text { in }\;B_r.
\]
Note that $\mathrm{L}_0(D^2v-\mathbf{L})=0$ in $B_r$ for any $\mathbf L \in \mathfrak{S}$.
Therefore, the same reasoning that led to estimate \eqref{eq2135fri} also applies here, yielding the identical estimate.

Let $\Phi(r)$ be defined as in \eqref{eq1100sat_nd}, and let $\beta \in (0,1)$ be an arbitrary but fixed constant.
By employing \eqref{eq2358wed} instead of \eqref{eq2142fri}, we derive an estimate similar to \eqref{eq1110sat_nd} (cf. \eqref{eq2154thu}). Specifically, there exists a constant $\kappa=\kappa(d,\lambda, \Lambda, \beta) \in (0,\frac12)$ such that
\begin{equation*}
\Phi(\kappa r) \le \kappa^\beta \Phi(r) + C \omega_{\rm coef}(r)\, \frac{1}{r} \norm{D^2u}_{L^\infty(B_r)} +C \omega_{Df}(r) + C \omega_{\rm lot}(r),
\end{equation*}
where $C=C(d, \lambda, \Lambda, \beta)$.

Let $\mathrm{M}_j(r_0)$ be as defined in \eqref{eq1244sat_nd}, where $r_0\in (0, \frac{1}{2}]$ is a number to be chosen later.
Then we obtain, similar to \eqref{eq4.38_nd}, the following estimate:
\begin{equation}			\label{eq1052sat}
\Phi(\kappa^j r_0) \le  \frac{\kappa^{\beta j}}{r_0} \fint_{B_{r_0}} \abs{D^2u}+ C\mathrm{M}_j(r_0) \tilde \omega_{\rm coef}(\kappa^j r_0) +C \tilde \omega_{Df}(\kappa^j r_0)\\
+C \tilde \omega_{\rm lot}(\kappa^j r_0).
\end{equation}

Let $\mathcal{S}_j \in \Sym^3(\mathbb{R}^d)$ and $\mathbf{P}_j \in \mathbb{S}^d$ for $j=0,1,2,\ldots$ be chosen as in \eqref{eq0203tue_nd}.
By utilizing \eqref{eq1052sat} instead of \eqref{eq4.38_nd}, we conclude (cf. \eqref{eq0215tue_nd}) that:
\begin{equation}			\label{eq1717sun_nd}
\lim_{j\to \infty} \mathbf{P}_j=0.
\end{equation}

We similarly obtain the following estimate for $k>l\ge 0$ (cf. \eqref{eq0946tue_nd} and \eqref{eq0947tue_nd}):
\begin{multline}			\label{eq1718sun_nd}
\abs{\mathcal{S}_k -\mathcal{S}_l} +\frac{\abs{\mathbf{P}_k - \mathbf{P}_l}}{\kappa^l r_0}
\le \frac{C \kappa^{\beta l}}{r_0} \fint_{B_{r_0}} \abs{D^2u}+ C\mathrm{M}_k(r_0) \int_0^{\kappa^l r_0} \frac{\tilde \omega_{\rm coef}(t)}{t}\,dt\\
+C \int_0^{\kappa^l r_0} \frac{\tilde \omega_{Df}(t)}{t}\,dt+C \int_0^{\kappa^l r_0} \frac{\tilde \omega_{\rm lot}(t)}{t}\,dt,
\end{multline}
where $C=C(d, \lambda, \Lambda,\beta)$.
This also yields the following bound for $\mathcal{S}_j$ (cf. \eqref{eq0900tue_nd}):
\begin{multline}			\label{eq1628sun_nd}
\abs{\mathcal{S}_j} \le  \frac{C}{r_0} \fint_{B_{r_0}} \abs{D^2u}+ C\mathrm{M}_j(r_0) \int_0^{r_0} \frac{\tilde \omega_{\rm coef}(t)}{t}\,dt\\
+C \int_0^{r_0} \frac{\tilde \omega_{Df}(t)}{t}\,dt +C \int_0^{r_0} \frac{\tilde \omega_{\rm lot}(t)}{t}\,dt,
\end{multline}
where $C=C(d, \lambda, \Lambda, \beta)$.

\medskip
The following lemma serves as a counterpart to Lemma \ref{lem3.11sat}.
\begin{lemma}			\label{lem2255sat}
Let $v$ be defined by
\begin{equation}			\label{eq1144sat}
v(x):=u(x)-\frac{1}{6} \langle \mathcal{S}_j, x \rangle x\cdot x-\frac{1}{2} \mathbf P_j x \cdot x.
\end{equation}
Then, for any $0<r \le \frac12$, we have
\begin{multline*}
\sup_{B_{r}}\, \abs{D^2v} \le C\left( \fint_{B_{2r}} \abs{D^2v}^{\frac12} \right)^{2} + C r\int_0^r \frac{\tilde \omega_{Df}(t)}{t}\,dt +Cr \int_0^r \frac{\tilde \omega_{\rm lot}(t)}{t}\,dt \\
+C(r\abs{\mathcal{S}_j}+\abs{\mathbf{P}_j})\int_0^r \frac{\tilde \omega_{\rm coef}(t)}{t}\,dt,
\end{multline*}
where $C=C(d, \lambda, \Lambda, \omega_{\rm coef},\beta)$.
\end{lemma}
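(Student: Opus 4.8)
The strategy is to follow the proof of Lemma~\ref{lem3.11sat}, carried over to the non-divergence setting and raised by one order of differentiation to absorb the cubic term in the definition \eqref{eq1144sat} of $v$. Since $\mathrm{L}u=f$ in $B_1\supset B_{2r}$ and $D^2v=D^2u-\langle\mathcal S_j,x\rangle-\mathbf P_j$ by \eqref{eq1852mon_nd}, the function $v$ satisfies
\[
\tr(\mathbf A D^2v)=f-\vec b\cdot Du-cu-\tr\big(\mathbf A(\langle\mathcal S_j,x\rangle+\mathbf P_j)\big)\quad\text{in }B_{2r};
\]
here $Du$ and $u$ differ from $Dv$ and $v$ by polynomials of degree $\le2$ and $\le3$ whose coefficients are controlled by $\abs{\mathcal S_j}$ and $\abs{\mathbf P_j}$.

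Fix $x_0\in B_{3r/2}$ and $0<t\le r/4$, put $\bar{\mathbf A}:=(\mathbf A)_{B_t(x_0)}$, and introduce localized analogues $\widetilde{\vec b\cdot Du}$, $\widetilde{cu}$, $\widetilde f$ of the quantities in \eqref{eq1604sat}, defined exactly as there but with all averages taken over $B_t(x_0)$ and the linear corrections re-centered at $x_0$. As in Section~\ref{sec5}, each of $\vec b\cdot Du-\widetilde{\vec b\cdot Du}$, $cu-\widetilde{cu}$, $f-\widetilde f$ is an affine function, and so is $\tr\big(\bar{\mathbf A}(\langle\mathcal S_j,x\rangle+\mathbf P_j)\big)$. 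Decompose $v=v_1+v_2$ on $B_t(x_0)$, where $v_1\in W^{2,p}\cap W^{1,p}_0(B_t(x_0))$ (for some $p>1$) solves
\[
\tr(\bar{\mathbf A}D^2v_1)=\widetilde f-\widetilde{\vec b\cdot Du}-\widetilde{cu}-\tr\big((\mathbf A-\bar{\mathbf A})D^2v\big)-\tr\big((\mathbf A-\bar{\mathbf A})(\langle\mathcal S_j,x\rangle+\mathbf P_j)\big)\quad\text{in }B_t(x_0),
\]
with $v_1=0$ on $\partial B_t(x_0)$. Then $v_2=v-v_1$ satisfies $\mathrm L_0v_2:=\tr(\bar{\mathbf A}D^2v_2)=\text{affine function}$ in $B_t(x_0)$, hence $\mathrm L_0(D^2v_2)=0$, so every component of $D^2v_2$ enjoys the interior estimates for the constant-coefficient operator $\mathrm L_0$.

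Applying Lemma~\ref{lem02} (after rescaling) to $v_1$ reduces the proof to bounding the $L^1(B_t(x_0))$-averages of the five terms on its right-hand side, which is the bookkeeping heart of the argument: $\fint_{B_t(x_0)}\abs{\widetilde f}\lesssim t\,\omega_{Df}(t)$ by the Poincar\'e inequality; the first slice of $\widetilde{\vec b\cdot Du}$ contributes $\lesssim t\,\omega_{D\vec b}(t)\norm{Du}_{L^\infty(B_t(x_0))}$ through the Poincar\'e inequality for $\vec b$; the other slices of $\widetilde{\vec b\cdot Du}$ and $\widetilde{cu}$ contribute, via the first- and second-order Poincar\'e inequalities for $Du$ and $u$, factors $t\fint_{B_t(x_0)}\abs{\vec b}$, $t^2\fint_{B_t(x_0)}\abs{D\vec b}$, $t^2\fint_{B_t(x_0)}\abs{c}$, $t^2\fint_{B_t(x_0)}\abs{Dc}$ multiplying $\norm{D^2u}_{L^\infty(B_t(x_0))}$ or $\norm{Du}_{L^\infty(B_t(x_0))}$; and $\fint_{B_t(x_0)}\abs{\mathbf A-\bar{\mathbf A}}\le\omega_{\mathbf A}(t)$ handles the two matrix terms. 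Setting
\[
\omega_{\rm coef}(t,x_0):=\omega_{\mathbf A}(t)+t\fint_{B_t(x_0)}\abs{\vec b}+t\fint_{B_t(x_0)}\abs{D\vec b}+t\fint_{B_t(x_0)}\abs{c}+t\fint_{B_t(x_0)}\abs{Dc},
\]
using $t\le r\le1$, the bound $\norm{D^2u}_{L^\infty(B_t(x_0))}\le\norm{D^2v}_{L^\infty(B_t(x_0))}+C(r\abs{\mathcal S_j}+\abs{\mathbf P_j})$, the inequality $\omega_{\rm coef}(t,x_0)\lesssim\omega_{\rm coef}(t)$, and the definitions \eqref{omega_coef_nd}--\eqref{omega_lot}, one arrives at
\[
\Big(\fint_{B_t(x_0)}\abs{D^2v_1}^{1/2}\Big)^{2}\lesssim\omega_{\rm coef}(t)\norm{D^2v}_{L^\infty(B_t(x_0))}+t\,\omega_{Df}(t)+t\,\omega_{\rm lot}(t)+(r\abs{\mathcal S_j}+\abs{\mathbf P_j})\,\omega_{\rm coef}(t,x_0).
\]
From here the argument is identical to that of \cite[Theorem~1.6]{DK17} (see also \cite[Theorem~1.5]{DEK18}, \cite[Lemma~2.2]{KL21}, and the proof of Lemma~\ref{lem3.11sat}): one combines the interior smoothness of $D^2v_2$ with the decay just displayed, iterates over the scales $t=\kappa^m(r/4)$, and sums the resulting Dini series via \eqref{rmk1147}--\eqref{rmk1147_2} to obtain the asserted estimate.

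I expect the main obstacle to be precisely the construction in the first two steps above: the localized tilde-splitting must make the remainder handed to $v_2$ genuinely affine --- which is what forces $\mathrm L_0(D^2v_2)=0$ --- while the tilde parts must simultaneously carry $L^1$-averages of exactly the form needed to reproduce $\tilde\omega_{\rm coef}$, $\tilde\omega_{Df}$, and $\tilde\omega_{\rm lot}$ after iteration, namely a small modulus of continuity multiplying a norm of $u$, $Du$, or $D^2u$. Keeping straight which Poincar\'e inequality (first- versus second-order) applies to each factor, and tracking the powers of $t$ versus $r$, is where the care lies; once the last displayed estimate is in hand, the rest of the proof is routine.
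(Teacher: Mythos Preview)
Your proposal is correct and follows essentially the same approach as the paper: decompose $v=v_1+v_2$ on each ball $B_t(x_0)$, apply the weak-type estimate of Lemma~\ref{lem02} to $v_1$, use constant-coefficient interior estimates for $D^2v_2$ (since $\mathrm L_0(D^2v_2)=0$), and iterate over dyadic scales.

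The one organizational difference worth flagging is in how the lower-order terms are packaged. You apply the tilde-splitting directly to $\vec b\cdot Du$ and $cu$, and only afterwards use $\norm{D^2u}_{L^\infty}\le\norm{D^2v}_{L^\infty}+C(r\abs{\mathcal S_j}+\abs{\mathbf P_j})$ to separate the contributions. The paper instead first writes $u=v+(\text{cubic polynomial})$, applies the tilde-splitting to $\vec b\cdot Dv$ and $cv$, and handles $\vec b\cdot D(\text{poly})$ and $c\cdot(\text{poly})$ by a simpler value-at-$x_0$ subtraction (so several extra lines appear in the paper's equation for $v_1$). Both routes produce an affine right-hand side for $v_2$ and the identical key estimate on $(\fint\abs{D^2v_1}^{1/2})^2$; your version is arguably a bit cleaner. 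One small omission in your bookkeeping: the first slice of $\widetilde{cu}$ contributes $t\,\omega_{Dc}(t)\norm{u}_{L^\infty(B_t(x_0))}$, which involves $\norm{u}_{L^\infty}$ rather than $\norm{Du}_{L^\infty}$ or $\norm{D^2u}_{L^\infty}$ as your text suggests, but this is exactly one of the summands in $\omega_{\rm lot}$ and is correctly absorbed into the $t\,\omega_{\rm lot}(t)$ term of your displayed estimate.
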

\begin{proof}
For $x_0 \in B_{3r/2}$ and $0<t \le r/4$, define $\bar{\mathbf A}:=(\mathbf A)_{B_t(x_0)}$ as the average of $\mathbf A$ over the ball  $B_t(x_0)$.
Note that $v$ satisfies
\begin{multline*}
\tr(\bar{\mathbf A}D^2v)
= f-\vec b \cdot Dv -cv-\tr((\mathbf A-\bar{\mathbf A})D^2 v) -\tr(\mathbf A (\langle \mathcal{S}_j, x\rangle + \mathbf{P}_j))\\
-\vec b\cdot(\tfrac{1}{2}\langle\mathcal{S}_j, x\rangle x + \mathbf{P}_jx) -c\left(\tfrac{1}{6} \langle \mathcal{S}_j, x \rangle x\cdot x+\tfrac{1}{2} \mathbf P_j x \cdot x\right).
\end{multline*}

Define the functions $\widetilde{f}$, $\widetilde{\vec b \cdot Dv}$, and $\widetilde{cv}$ as in \eqref{eq1604sat}.
More precisely, we set:
\begin{align}
			\nonumber
\widetilde{\vec b \cdot Dv}&= \left(\vec b -(\vec b)_{B_t(x_0)} -(D \vec b)_{B_t(x_0)}(x-x_0)\right)\cdot Dv+ (\vec b)_{B_t(x_0)}\cdot \left(Dv-(Dv)_{B_t(x_0)}\right)\\
			\nonumber
&\qquad+\left(Dv - (Dv)_{B_t(x_0)}\right)\cdot  (D \vec b)_{B_t(x_0)}\,(x-x_0),\\
			\nonumber
\widetilde{cv}&=\left(c- (c)_{B_t(x_0)}- (Dc)_{B_t(x_0)}\cdot(x-x_0)\right) v+\left(v - (v)_{B_t(x_0)}\right) (Dc)_{B_t(x_0)}\cdot (x-x_0)\\
			\nonumber
&\qquad + (c)_{B_t(x_0)} \left(v-(v)_{B_t(x_0)}-(Dv)_{B_t(x_0)}\cdot(x-x_0)\right),\\
			\label{eq1555sat}
\widetilde{f}&=f- (f)_{B_t(x_0)} - (Df)_{B_t(x_0)} \cdot (x-x_0).
\end{align}
Note that the differences $f-\widetilde f$, $cv- \widetilde{cv}$ and $\vec b\cdot Dv- \widetilde{\vec b\cdot Dv}$ are all affine functions.

\smallskip
We decompose $v$ as $v=v_1+v_2$, where $v_1 \in W^{2,p} \cap W^{1,p}_0(B_t(x_0))$ for some $p>1$, and it solves the following Dirichlet problem:
\begin{align*}
\tr(\bar{\mathbf A}D^2v_1) &= \widetilde{f}- \widetilde{\vec b \cdot Dv}-\widetilde{cv}-\tr((\mathbf A-\bar{\mathbf A})D^2 v)-\tr\{(\mathbf A-\bar{\mathbf A}) (\langle \mathcal{S}_j, x\rangle + \mathbf{P}_j)\} \\
&\quad-\vec b \cdot \left\{ (\tfrac{1}{2}\langle\mathcal{S}_j, x\rangle x + \mathbf{P}_jx)- (\tfrac{1}{2}\langle\mathcal{S}_j, x_0 \rangle x_0 + \mathbf{P}_j x_0) \right\}\\
&\quad-(\vec b-(\vec b)_{B_t(x_0)}) \cdot(\tfrac{1}{2}\langle\mathcal{S}_j, x_0\rangle x_0 + \mathbf{P}_j x_0) \\
&\quad -c\left\{ (\tfrac{1}{6} \langle \mathcal{S}_j, x \rangle x\cdot x+\tfrac{1}{2} \mathbf P_j x \cdot x)-(\tfrac{1}{6} \langle \mathcal{S}_j, x_0 \rangle x_0\cdot x_0+\tfrac{1}{2} \mathbf P_j x_0 \cdot x_0)\right\}\\
&\quad -(c-(c)_{B_t(x_0)}) (\tfrac{1}{6} \langle \mathcal{S}_j, x_0 \rangle x_0\cdot x_0+\tfrac{1}{2} \mathbf P_j x_0 \cdot x_0) \quad\mbox{in }\,B_t(x_0),
\end{align*}
with $v_1=0$ on $\partial B_t(x_0)$.

\smallskip
Then, by applying Lemma \ref{lem02} with a rescaling argument, and using the mean value theorem together with the Poincar\'e inequality, we obtain the following estimate:
\begin{align}
				\nonumber
\left(\fint_{B_t(x_0)} \abs{D^2v_1}^{\frac12} \right)^{2} &\lesssim \fint_{B_t(x_0)} \left(\abs{\widetilde{f}}+ \abs{\widetilde{\vec b \cdot Dv}}+\abs{\widetilde{cv}} \right)
+\omega_{\mathbf A}(t) \norm{D^2v}_{L^\infty(B_t(x_0))}\\
				\nonumber
&\qquad+\omega_{\mathbf A}(t)(r\abs{\mathcal{S}_j}+\abs{\mathbf P_j})+ \left(\fint_{B_t(x_0)}\abs{\vec b}\right)t(r\abs{\mathcal{S}_j}+\abs{\mathbf P_j})\\
				\nonumber
&\qquad+ t\left(\fint_{B_t(x_0)}\abs{D\vec b}\right) r(r\abs{\mathcal{S}_j}+\abs{\mathbf P_j}) +t\left(\fint_{B_t(x_0)}\abs{c}\right) r(r\abs{\mathcal{S}_j}+\abs{\mathbf P_j})\\
				\label{eq1636sun}
&\qquad+t\left(\fint_{B_t(x_0)}\abs{Dc}\right)r^2(r\abs{\mathcal{S}_j}+\abs{\mathbf P_j}).
\end{align}

Next, combining \eqref{eq1555sat} and \eqref{eq1357wed} with \eqref{eq1144sat}, we obtain
\begin{align}
				\nonumber
\fint_{B_t(x_0)}\abs{\widetilde{f}} &+ \abs{\widetilde{\vec b \cdot Dv}}+\abs{\widetilde{cv}} \lesssim t \omega_{Df}(t)+t\omega_{D \vec b}(t)\norm{Du}_{L^\infty(B_t(x_0))}+tr(r\abs{\mathcal{S}_j}+\abs{\mathbf P_j}) \fint_{B_t(x_0)} \abs{D\vec b}\\
				\nonumber
&\quad+t \norm{D^2v}_{L^\infty(B_t(x_0))} \fint_{B_t(x_0)} \abs{\vec b}+ t^2 \norm{D^2v}_{L^\infty(B_t(x_0))} \fint_{B_t(x_0)} \abs{D\vec b}\\
				\nonumber
&\quad+t\omega_{Dc}(t)\norm{u}_{L^\infty(B_t(x_0))}+ tr^2(r\abs{\mathcal{S}_j}+\abs{\mathbf P_j})\fint_{B_t(x_0)}\abs{Dc}+ t^2\norm{Du}_{L^\infty(B_t(x_0))}  \fint_{B_t(x_0)}\abs{Dc}\\
				\label{eq1635sun}
&\quad +t^2 r(r\abs{\mathcal{S}_j}+\abs{\mathbf P_j}) \fint_{B_t(x_0)}\abs{Dc}+t^2\norm{D^2v}_{L^\infty(B_t(x_0))} \fint_{B_t(x_0)}\abs{c}.
\end{align}
Define (cf. \eqref{omega_lot})
\begin{equation*}		
\omega_{\rm lot}(t,x_0):=\norm{Du}_{L^\infty(B_t(x_0))} \left(\omega_{D \vec b}(t) +t^{1-d} \int_{B_t(x_0)} \abs{Dc} \right)+ \norm{u}_{L^\infty(B_t(x_0))} \omega_{Dc}(t).
\end{equation*}
Then, using \eqref{eq1636sun} and \eqref{eq1635sun}, we obtain
\begin{multline*}
\left(\fint_{B_t(x_0)} \abs{Dv_1}^{\frac12} \right)^{2} \le Ct \omega_{Df}(t) +C \omega_{\rm coef}(t)\norm{D^2v}_{L^\infty(B_t(x_0))} + Ct \omega_{\rm lot}(t, x_0)\\
+C(r\abs{\mathcal{S}_j}+\abs{\mathbf{P}_j})\omega_{\rm coef}(t,x_0),
\end{multline*}
where $C=C(d, \lambda, \Lambda)$, and we define  (noting that $r\le 1$)
\[
\omega_{\rm coef}(t,x_0):=\omega_{\mathbf A}(t)+t\fint_{B_t(x_0)}\abs{\vec b} + t \fint_{B_t(x_0)}\abs{c} + t\fint_{B_t(x_0)}\abs{D \vec b}+ t\fint_{B_t(x_0)}\abs{Dc}.
\]

On the other hand, recall that $f-\widetilde f$, $cv- \widetilde{cv}$ and $\vec b\cdot Dv- \widetilde{\vec b\cdot Dv}$ are all affine functions.
Thus, $v_2=v-v_1$ satisfies
\[
\mathrm{L}_0 v_2:=\tr(\bar{\mathbf A}D^2 v_2) = \mbox{affine function} \quad\mbox{ in }\, B_t(x_0).
\]
Therefore, $\mathrm{L}_0(D^2 v_2)=0$ in $B_t(x_0)$.
The remainder of the proof follows the same arguments as those in the proof of \cite[Theorem 1.6]{DK17} and \cite[Lemma 2.2]{KL21}.
\end{proof}

By using Lemma~\ref{lem2255sat}, \eqref{eq1052sat} and \eqref{eq1144sat}, we obtain (cf. \eqref{eq0934wed0_nd})
\begin{align}	
			\nonumber
&\norm{D^2u-\langle \mathcal{S}_j, x \rangle-\mathbf P_j}_{L^\infty(B_{\frac12 \kappa^j r_0})} \le C\kappa^{(1+\beta)j} \fint_{B_{r_0}} \abs{D^2u}+C\kappa^j r_0 \mathrm{M}_j(r_0) \tilde\omega_{\rm coef}(\kappa^j r_0)\\
			\nonumber
&\qquad\qquad+C \left(\abs{\mathcal{S}_j}\kappa^j r_0+\abs{\mathbf{P}_j}\right) \int_0^{\kappa^j r_0}\frac{\tilde \omega_{\rm coef}(t)}{t}\,dt + C \kappa^j r_0 \int_0^{\kappa^j r_0} \frac{\tilde \omega_{Df}(t)}{t}\,dt \\
			\label{eq1218mon_nd}
&\qquad\qquad +C\kappa^j r_0 \int_0^{\kappa^j r_0} \frac{\tilde \omega_{\rm lot}(t)}{t}\,dt.
\end{align}

Using $D^2u(0)=0$, we deduce from \eqref{eq1218mon_nd} that
\begin{multline}				\label{eq1123sun}
\abs{\mathbf P_j} \le C\kappa^{(1+\beta)j} \fint_{B_{r_0}} \abs{D^2u}+C\kappa^j r_0 \mathrm{M}_j(r_0) \tilde\omega_{\rm coef}(\kappa^j r_0) +C \kappa^j r_0 \abs{\mathcal{S}_j}\int_0^{\kappa^j r_0}\frac{\tilde \omega_{\rm coef}(t)}{t}\,dt \\
+ C \abs{\mathbf{P}_j} \int_0^{\kappa^j r_0}\frac{\tilde \omega_{\rm coef}(t)}{t}\,dt+C \kappa^j r_0 \int_0^{\kappa^j r_0} \frac{\tilde \omega_{Df}(t)}{t}\,dt +C\kappa^j r_0 \int_0^{\kappa^j r_0} \frac{\tilde \omega_{\rm lot}(t)}{t}\,dt.
\end{multline}

We require $r_0 \le r_1$, where $r_1= r_1(d, \lambda, \Lambda, \omega_{\rm coef}, \beta)>0$ is chosen so that
\[
C \int_0^{r_1} \frac{\tilde \omega_{\rm coef}(t)}{t}\,dt\le \frac12.
\]

We derive from \eqref{eq1123sun} and \eqref{eq1628sun_nd} the following inequality:
\begin{align}
			\nonumber
\abs{\mathbf{P}_j} & \le C {\kappa^j r_0}\left\{\kappa^{\beta j}+\int_0^{\kappa^j r_0} \frac{\tilde \omega_{\rm coef}(t)}{t}\,dt\right\}\frac{1}{r_0} \fint_{B_{r_0}} \abs{D^2u}+C{\kappa^j r_0}\mathrm{M}_j(r_0)\int_0^{\kappa^j r_0}\frac{\tilde\omega_{\rm coef}(t)}{t}\,dt\\
			\label{eq1123mon}
&\qquad+ C\kappa^j r_0 \int_0^{r_0}\frac{\tilde\omega_{Df}(t)}{t}\,dt + C\kappa^j r_0 \int_0^{r_0} \frac{\tilde \omega_{\rm lot}(t)}{t}\,dt,
\end{align}
where $C=C(d, \lambda, \Lambda,\omega_{\rm coef}, \beta)$.

Then, similar to \eqref{eq1920thu_nd}, we obtain from \eqref{eq1218mon_nd}, \eqref{eq1628sun_nd}, and \eqref{eq1123mon} the following estimate:
\begin{align}				\nonumber
&\norm{D^2u-\langle\mathcal{S}_j, x\rangle-\mathbf{P}_j}_{L^\infty(B_{\frac12 \kappa^j r_0})}  \le C \kappa^j r_0 \left\{\kappa^{\beta j}+ \int_0^{\kappa^j r_0} \frac{\tilde \omega_{\rm coef}(t)}{t}\,dt \right\} \frac{1}{r_0} \fint_{B_{r_0}} \abs{D^2u}\\
						\nonumber
&\qquad+C \kappa^j r_0 \mathrm{M}_j(r_0) \int_0^{\kappa^j r_0} \frac{\tilde\omega_{\rm coef}(t)}{t}\,dt +C \kappa^j r_0 \left\{\int_0^{\kappa^j r_0} \frac{\tilde \omega_{Df}(t)}{t}\,dt +\int_0^{\kappa^j r_0}\frac{\tilde \omega_{\rm lot}(t)}{t}\,dt\right\}.\\
						\label{eq1720sun}
&\qquad +C \kappa^j r_0 \left\{ \int_0^{r_0} \frac{\tilde \omega_{Df}(t)}{t}\,dt +\int_0^{r_0} \frac{\tilde \omega_{\rm dat}(t)}{t}\,dt\right\}\int_0^{\kappa^j r_0} \frac{\tilde \omega_{\rm coef}(t)}{t}\,dt.
\end{align}
Additionally, similar to \eqref{eq1555sun_nd}, we obtain
\begin{multline}				\label{eq1244mon}
\frac 2 {\kappa^j r_0}\norm{D^2u}_{L^\infty(B_{\frac12 \kappa^j r_0})}
\le \frac C {r_0} \fint_{B_{r_0}} \abs{D^2u} + C \mathrm{M}_j(r_0) \int_0^{r_0} \frac{\tilde \omega_{\rm coef}(t)}{t}\,dt \\
+ C \int_0^{r_0}\frac{\tilde\omega_{Df}(t)}{t}\,dt + C \int_0^{r_0}\frac{\tilde\omega_{\rm lot}(t)}{t}\,dt,
\end{multline}
where $C=C(d, \lambda, \Lambda, \omega_{\rm coef}, \beta)$.

Then, by following the same proof as in Lemma~\ref{lem1548sun_nd}, we conclude from \eqref{eq1244mon} that there exists $r_0=r_0(d, \lambda, \Lambda, \omega_{\rm coef}, \beta) \in (0,\frac12]$ such that
\begin{equation}			\label{eq1706sun}
\sup_{j \ge 1} \mathrm{M}_j(r_0) \le \frac{C}{r_0} \fint_{B_{2r_0}} \abs{D^2u}+C \int_0^{r_0} \frac{\tilde \omega_{Df}(t)}{t}\,dt + C \int_0^{r_0} \frac{\tilde \omega_{\rm lot}(t)}{t}\,dt,
\end{equation}
where $C=C(d, \lambda, \Lambda, \omega_{\rm coef}, \beta)$.

From \eqref{eq1718sun_nd} and \eqref{eq1706sun}, we can conclude that the sequence $\{\mathcal{S}_j\}$ is a Cauchy sequence in $\Sym^3(\mathbb{R}^d)$. Therefore, it converges to some $\mathcal{S} \in \Sym^3(\mathbb{R}^d)$.
By taking the limit as $k\to \infty$ in \eqref{eq1718sun_nd} (while recalling \eqref{eq1706sun} and \eqref{eq1717sun_nd}), and then setting $l=j$, we obtain the following estimate:
\begin{align}
			\nonumber
\abs{\mathcal{S}_j -\mathcal{S}} + \frac{\abs{\mathbf{P}_j}}{\kappa^j r_0} &\le C \left\{ \kappa^{\beta j}+\int_0^{\kappa^j r_0} \frac{\tilde \omega_{\rm coef}(t)}{t}\,dt \right\} \frac{1}{r_0} \fint_{B_{2r_0}} \abs{D^2u}\\
			\nonumber
&\qquad+C \left\{\int_0^{r_0} \frac{\tilde \omega_{Df}(t)}{t}\,dt +\int_0^{r_0} \frac{\tilde \omega_{\rm lot}(t)}{t}\,dt \right\} \int_0^{\kappa^j r_0} \frac{\tilde \omega_{\rm coef}(t)}{t}\,dt\\
			\label{eq2203sun}
&\qquad+ C \int_0^{\kappa^j r_0} \frac{\tilde \omega_{Df}(t)}{t}\,dt +C \int_0^{\kappa^j r_0} \frac{\tilde \omega_{\rm lot}(t)}{t}\,dt,
\end{align}
where $C=C(d, \lambda, \Lambda,\omega_{\rm coef}, \beta)$.

Then, similar to \eqref{eq2221sun_nd}, it follows from \eqref{eq1720sun}, \eqref{eq1706sun}, and \eqref{eq2203sun} that
\begin{align*}
\norm{D^2u-\langle \mathcal{S}, x\rangle}_{L^\infty(B_{\frac12 \kappa^j r_0})} &\le
C \kappa^j r_0 \left\{\kappa^{\beta j}+ \int_0^{\kappa^j r_0} \frac{\tilde \omega_{\rm coef}(t)}{t}\,dt \right\}\frac{1}{r_0} \fint_{B_{2r_0}} \abs{D^2u}\\
&\qquad+C \kappa^j r_0 \left\{\int_0^{r_0} \frac{\tilde \omega_{Df}(t)}{t}\,dt +\int_0^{r_0} \frac{\tilde \omega_{\rm lot}(t)}{t}\,dt \right\} \int_0^{\kappa^j r_0} \frac{\tilde \omega_{\rm coef}(t)}{t}\,dt\\
&\qquad+ C \kappa^j r_0 \left\{\int_0^{\kappa^j r_0} \frac{\tilde \omega_{Df}(t)}{t}\,dt  \int_0^{\kappa^j r_0} \frac{\tilde \omega_{\rm lot}(t)}{t}\,dt \right\}.
\end{align*}

From the previous inequality, we derive the following uniform estimate for all $r\in (0, r_0/2)$:
\begin{multline}		\label{eq2010sun}
\frac{1}{r} \norm{D^2u-\langle \mathcal{S}, x\rangle}_{L^\infty(B_r)}  \le \varrho_{\rm coef}(r) \left\{\frac{1}{r_0} \fint_{B_{2r_0}} \abs{D^2u} + \varrho_{Df}(r_0)+\varrho_{\rm lot}(r_0)\right\}\\
+\varrho_{Df}(r)+\varrho_{\rm lot}(r),
\end{multline}
where the moduli of continuity $\varrho_{\cdots}(r)$ are defined by
\begin{equation*}		
\begin{aligned}
\varrho_{\rm coef}(r)&:=C\left\{\left(\frac{2r}{\kappa r_0}\right)^\beta+\int_0^{2r/\kappa}\frac{\tilde\omega_{\rm coef}(t)}{t}\,dt\right\},\\
\varrho_{Df}(r)&:=C \int_0^{2r/\kappa} \frac{\tilde \omega_{Df}(t)}{t}\,dt,\\
\varrho_{\rm lot}(r)&:=C \int_0^{2r/\kappa} \frac{\tilde \omega_{\rm lot}(t)}{t}\,dt,
\end{aligned}
\end{equation*}
where $C=C(d, \lambda, \Lambda,\omega_{\rm coef}, \beta)$.

In particular, from \eqref{eq2010sun}, we conclude that $D^2u$ is differentiable at $0$.
Moreover, it follows from \eqref{eq1628sun_nd} and \eqref{eq1706sun} that (noting $D^3u(0)=\mathcal{S}=\lim_{j\to \infty} \mathcal{S}_j$):
\[
\abs{D^3 u (0)} \le C \left\{\frac{1}{r_0} \fint_{B_{2r_0}} \abs{D^2u}+ \int_0^{r_0} \frac{\tilde \omega_{Df}(t)}{t}\,dt + \int_0^{r_0} \frac{\tilde \omega_{\rm lot}(t)}{t}\,dt\right\},
\]
where $C=C(d, \lambda, \Lambda, \omega_{\rm coef},\beta)$.

To analyze the modulus of continuity $\varrho_{\rm lot}$, recall the definition:
\[
\omega_{\rm lot}(r) :=\norm{Du}_{L^\infty(B_r)} \left\{\omega_{D \vec b}(r) +r^{1-d} \sup_{x\in \Omega} \int_{\Omega \cap B_r(x)} \abs{Dc} \right\}+ \norm{u}_{L^\infty(B_r)}\,\omega_{Dc}(r).
\]
We isolate a component of this expression by defining:
\[
\omega_{Dc,1}(r) := r^{1-d} \sup_{x\in \Omega} \int_{\Omega \cap B_r(x)} \abs{Dc}.
\]
It is clear that $\omega_{Dc,1}(r)$ satisfies the Dini condition.
Then, for $r \in (0,r_0/2)$, we obtain the following estimate for $\varrho_{\rm lot}(r)$:
\begin{equation}		\label{eq2119sun}
\varrho_{\rm lot}(r) \le C \norm{Du}_{L^\infty(B_{r_0})} \left\{
\varrho_{D\vec b}(r)+ \hat \varrho_{Dc}(r)\right\}
 +C \norm{u}_{L^\infty(B_{r_0})} \varrho_{Dc}(r),
\end{equation}
where the moduli of continuity are defined by
\begin{align*}
\varrho_{D\vec b}(r)&:=C \int_0^{2r/\kappa} \frac{\tilde \omega_{D\vec b}(t)}{t}\,dt,\\
\varrho_{Dc}(r)&:=C \int_0^{2r/\kappa} \frac{\tilde \omega_{Dc}(t)}{t}\,dt,\\
\hat \varrho_{Dc}(r)&:=C \int_0^{2r/\kappa} \frac{\tilde \omega_{Dc,1}(t)}{t}\,dt.
\end{align*}
Note that inequality \eqref{eq2119sun} implies that if $D\vec b$ and $Dc$ are $C^\alpha$ functions, then choosing $\beta \in (\alpha,1)$ yields $\varrho_{\rm lot}(t) \lesssim t^\alpha$.
The theorem is proved.
\qed


\end{document}